\theoremstyle{definition}
\newtheorem{theorem}{Theorem}[section]
\newtheorem{convention}[theorem]{Convention}
\newtheorem{lemma}[theorem]{Lemma}
\newtheorem{corollary}[theorem]{Corollary}
\newtheorem{notation}[theorem]{Notation}
\newtheorem{proposition}[theorem]{Proposition}
\newtheorem{question}[theorem]{Question}
\newtheorem{example}[theorem]{Example}
\newtheorem{remark}[theorem]{Remark}
\newtheorem{definition}[theorem]{Definition}
\definecolor{darkblue}{rgb}{0.0, 0.0, 0.8}
\definecolor{darkred}{rgb}{0.8, 0.0, 0.0}
\definecolor{darkgreen}{rgb}{0.0, 0.5, 0.0}
\newcommand{\ok}[1]	{{{#1}}}
\title{Extracting Persistent Clusters in Dynamic Data via M\"obius inversion}
\author[1]{Woojin Kim}
\author[2]{Facundo M\'emoli}
\affil[1]{Department of Mathematics,
	Duke University\\
	\texttt{woojin@math.duke.edu}}
\affil[2]{Department of Mathematics and Department of Computer Science and Engineering,
	The Ohio State University\\ 
	\texttt{facundo.memoli@gmail.com}}
\newcommand{\veewedge}{\mathpalette\superimpose{{\bigvee}{\bigwedge}}}
\newcommand{\pidgm}{\Pi\mathrm{dgm}^{\bigwedge}}
\newcommand{\full}{\mathrm{full}}
\newcommand{\ob}{\mathrm{ob}}
\newcommand{\C}{\mathbf{C}}
\newcommand{\Pb}{\mathbf{P}}
\newcommand{\dero}{d_{\mathrm{E}}}
\newcommand{\bettiy}{\abs{\veewedge \theta_Y}}
\newcommand{\betti}{\abs{\veewedge \theta_X}}
\newcommand{\subpart}{\mathrm{SubPart}}
\newcommand{\RNum}[1]{\uppercase\expandafter{\romannumeral #1\relax}}
\newcommand{\rNum}[1]{\lowercase\expandafter{\romannumeral #1\relax}}
\newcommand{\incc}{\mathcal{J}}
\newcommand{\gammay}{\gamma_Y=(Y,d_Y(\cdot))}
\newcommand{\gammax}{\gamma_X=(X,d_X(\cdot))}
\newcommand{\dyngy}{\mathcal{G}_Y=\left(V_Y(\cdot),E_Y(\cdot)\right)}
\newcommand{\dyngx}{\mathcal{G}_X=\left(V_X(\cdot),E_X(\cdot)\right)}
\newcommand{\dgx}{\mathcal{G}_X}
\newcommand{\dgy}{\mathcal{G}_Y}
\newcommand{\dgw}{\mathcal{G}_W}
\newcommand{\dyndis}{\mathrm{dis}^\mathrm{dyn}}
\newcommand{\dintll}{d_{\mathrm{I},\lambda'}^\mathrm{dynM}}
\newcommand{\dintl}{d_{\mathrm{I},\lambda}^\mathrm{dynM}}
\newcommand{\B}{\mathrm{barc}}
\newcommand{\barc}{\mathrm{barc}_{\ZZ}}
\newcommand{\reeb}{\mathrm{Reeb}}
\newcommand{\dintg}{d_\mathrm{I}^{\mathrm{dynG}}}
\newcommand{\dintm}{d_\mathrm{I}^{\mathrm{dynM}}}
\newcommand{\ZZ}{\mathbf{ZZ}}
\newcommand{\Po}{\mathbf{P}}
\newcommand{\vect}{\mathbf{vec}}
\newcommand{\Vect}{\mathbf{Vec}}
\newcommand{\free}{\mathcal{F}}
\newcommand{\dynG}{\mathcal{G}}
\newcommand{\Sets}{\mathbf{Set}}
\newcommand{\graph}{\mathrm{Graph}}
\newcommand{\X}{\mathbf{X}}
\newcommand{\pow}{\mathrm{pow}}
\newcommand{\dgh}{d_\mathrm{GH}}
\newcommand{\F}{\mathbb{F}}
\newcommand{\im}{\mathrm{im}}
\newcommand{\coim}{\mathrm{coim}}
\newcommand{\dis}{\mathrm{dis}}
\newcommand{\bott}{d_\mathrm{B}}
\newcommand{\dintf}{d_{\mathrm{I}}^\mathrm{F}}
\newcommand{\dintreeb}{d_{\mathrm{I}}^\mathrm{R}}
\newcommand{\dintwreeb}{d_{\mathrm{I}}^{\omega\mathrm{R}}}
\newcommand{\Hrm}{\mathrm{H}}
\newcommand{\T}{\mathbf{R}}
\newcommand{\Z}{\mathbf{Z}}
\newcommand{\R}{\mathbf{R}}
\newcommand{\Rop}{\mathbf{R}^\mathrm{op}}
\newcommand{\U}{\mathbf{Int}}
\newcommand{\N}{\mathbf{N}}
\newcommand{\NN}{\mathbb{N}}
\newcommand{\eps}{\varepsilon}
\newcommand{\dint}{d_{\mathrm{I}}}
\newcommand{\dgm}{\mathrm{dgm}}
\newcommand{\rips}{\mathcal{R}_\delta}
\newcommand{\emb}{\mathcal{C}}
\newcommand{\collapsing}{collapsing functor}
\newcommand{\unlabel}{\mathcal{U}_L^X}
\newcommand{\unlabelY}{\mathcal{U}_L^Y}
\newcommand{\unweight}{\mathcal{U}_{\omega}}
\newcommand{\A}{\mathcal{A}}
\newcommand{\sets}{\mathbf{set}}
\newcommand{\wsets}{\omega\sets}
\newcommand{\id}{\mathrm{id}}
\newcommand{\crit}{\mathrm{crit}}
\newcommand{\lmulti}{\left\{}
\newcommand{\rmulti}{\right\}}
\newcommand{\abs}[1]{\left\lvert#1\right\rvert}
\newcommand{\norm}[1]{\left\lVert#1\right\rVert}
\newcommand{\tripod}{R:\ X \xtwoheadleftarrow{\varphi_X} Z \xtwoheadrightarrow{\varphi_Y} Y}
\newcommand{\tripodone}{R_1: \ X \xtwoheadleftarrow{\varphi_X} Z_1 \xtwoheadrightarrow{\varphi_Y} Y}
\newcommand{\tripodtwo}{R_2: \ Y \xtwoheadleftarrow{\psi_Y} Z_2 \xtwoheadrightarrow{\psi_W} W}
\newcommand{\Int}{\mathbf{Int}}
\newcommand{\Intzz}{\mathbf{Int}(\ZZ)}
\newcommand{\Intzzop}{\mathbf{Int}(\ZZ)^{\mathrm{op}}}
\newcommand{\cosheaf}{cosheaf-inducing}
\newcommand{\Cosheaf}{Cosheaf-inducing}
\newcommand{\fpow}{\mathbb{Z}\pow}
\newcommand{\PP}{\mathbf{P}}
\newcommand{\Part}{\mathbf{Part}}
\newcommand{\Partin}{\mathbf{Part}}
\newcommand{\supp}{\mathrm{supp}}
\def\my@overarrow@#1#2#3{\vbox{\ialign{##\crcr #1#2\crcr \noalign{\kern-\p@\nointerlineskip}$\m@th \hfil #2#3\hfil $\crcr}}}
\newcommand{\amsvectb}{%
	\mathpalette {\my@overarrow@\vectfillb@}}
\newcommand{\vecbar}{%
	\scalebox{0.8}{$\relbar$}}
\def\vectfillb@{\arrowfill@\vecbar\vecbar{\raisebox{-4.35pt}[\p@][\p@]{$\mathord\mathchar"017E$}}}
\newcommand{\amsvect}{%
	\mathpalette {\my@overarrow@\vectfill@}}
\def\vectfill@{\arrowfill@\relbar\relbar{\raisebox{-3.81pt}[\p@][\p@]{$\mathord\mathchar"017E$}}}
\newcommand{\superimpose}[2]{%
  {\ooalign{$#1\@firstoftwo#2$\cr\hfil$#1\@secondoftwo#2$\hfil\cr}}}
\begin{document}
\maketitle

\begin{abstract}
Identifying and representing \emph{clusters} in time-varying network data is of particular importance when studying collective behaviors emerging in nature, in mobile device networks or in social networks. Based on combinatorial, categorical, and persistence theoretic viewpoints,  we establish a stable functorial pipeline for the summarization of the evolution of clusters in a time-varying network.  

We first construct a complete summary of the evolution of clusters in a given time-varying network over a set of entities $X$ of which takes the form of a \emph{formigram}. This formigram can be understood as a certain Reeb graph $\mathcal{R}$ which is labeled by subsets of $X$.  By applying M\"obius inversion to the formigram in two different manners, we obtain two dual notions of diagram: the  \emph{maximal group diagram}  and the \emph{persistence clustergram}, both of which are in the form of an `annotated' barcode. The maximal group diagram consists of time intervals annotated by their corresponding \emph{maximal groups} --- a notion due to Buchin et al., implying that we recognize the notion of maximal groups as a special instance of \emph{generalized persistence diagram} by Patel. On the other hand, the persistence clustergram is mostly obtained by annotating the intervals in the zigzag barcode of the Reeb graph $\mathcal{R}$ with certain merging/disbanding events in the given time-varying network. 

We show that both diagrams are complete invariants of formigrams (or equivalently of \emph{trajectory grouping structure} by Buchin et al.) and thus contain more information than the Reeb graph $\mathcal{R}$.

\end{abstract}


	\tableofcontents

	\section{Introduction} 
	
One of the most frequent tasks in data analysis consists of finding clusters in datasets. Since data in the real world are often  \emph{spatiotemporal} (i.e. data encode for both space and time), identifying and analyzing clusters in such data is of critical importance. 
Spatiotemporal data includes flocking, swarming, or herding behavior of animals \cite{benkert2008reporting,gudmundsson2006computing,gudmundsson2007efficient,huang2008modeling,li2010swarm,parrish1997animal,sumpter-collective,vieira2009line}, sensor networks \cite{adams2015evasion,adams2021efficient,de2006coordinate,de2007homological,gamble2012applied},  swarm robots \cite{rubenstein2012kilobot}, convoys \cite{jeung2008discovery}, moving clusters \cite{kalnis2005discovering}, or mobile groups \cite{hwang2005mining,wang2008efficient}.
Furthermore, with the development of tracking and mobile devices with positioning capabilities, numerous object trajectory data are being collected, which makes the task of identifying and analyzing clusters in spatiotemporal datasets increasingly more important.

Buchin et al. \cite{buchin2013trajectory} introduced a formal definition of \emph{groups} in a given set $\X$ of trajectories and  introduced a notion of \emph{trajectory grouping structure} which encodes all groups in $\X$ represented as a Reeb graph along the timeline that is decorated by \emph{maximal groups}. 

\begin{figure}
    \centering
    \includegraphics[width=\textwidth]{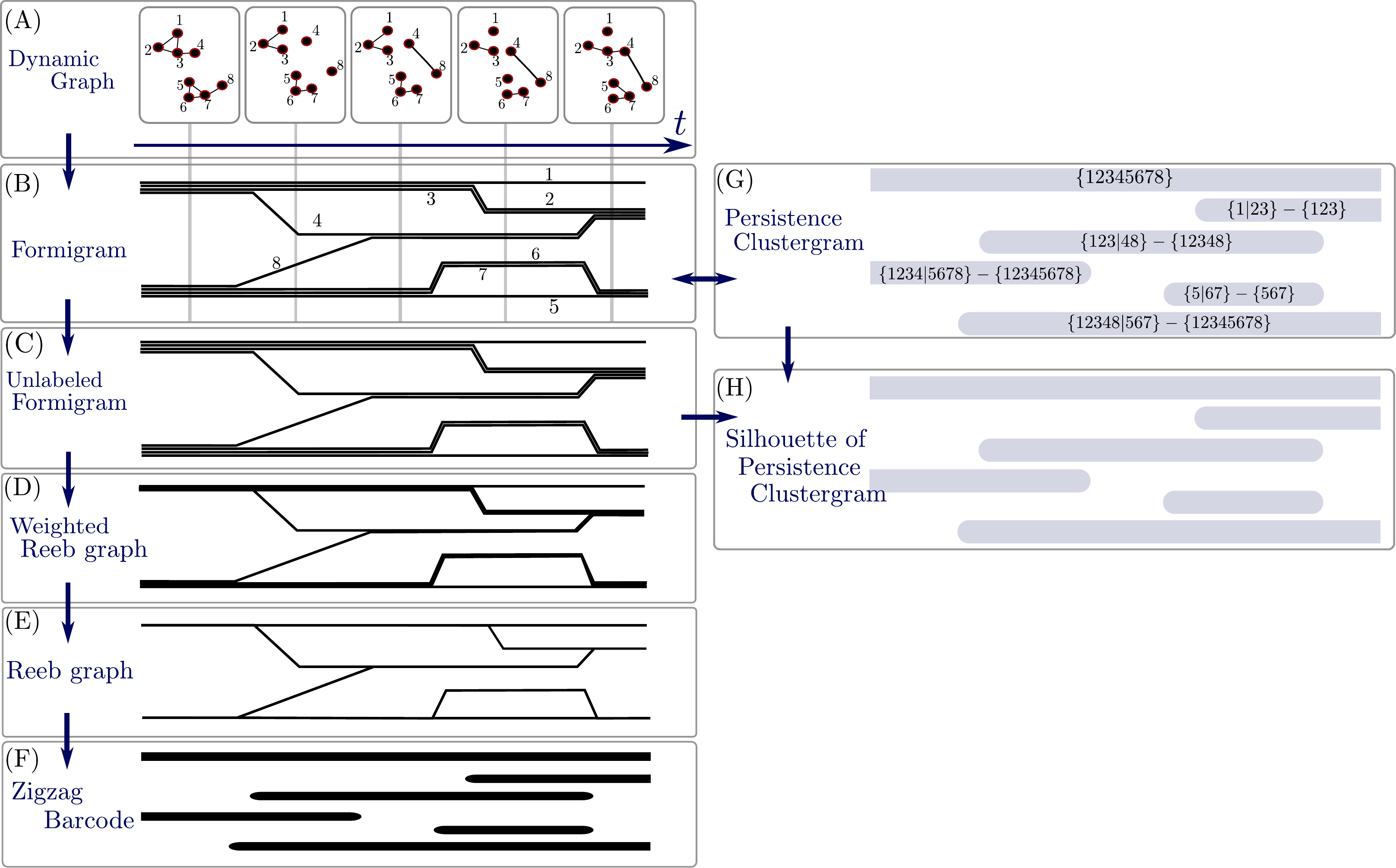}
    \caption{\textbf{Summarization process of a dynamic graph.} \ok{The evolution of connected components in a dynamic graph (A) is encoded in a formigram (B).
    A summarization procedure of the formigram yields invariants depicted in (C), (D), (E), and (F) in order. On the other hand, a persistence clustergram (G) is obtained from the formigram via M\"obius inversion. Persistence clustergrams are complete invariants of formigrams, i.e. persistence clustergrams permit reconstructing formigrams (hence the two-way arrow between (B) and (G)). 
    When a given formigram satisfies a certain property, the silhouette of persistence clustergram (H) is isomorphic to the zigzag barcode (F).
    Observe from (B), (G), and (F) that the label on an interval in the persistence clustergram indicates which merging and disbanding events in the formigram account for the corresponding interval in the zigzag barcode. In (F), (G), and (H) a rounded ending on an interval indicates an open endpoint while a straight ending indicates a closed endpoint.}}
    \label{fig:entire picture}
\end{figure}

The  \textbf{first} goal of this work is to make clear identification of the trajectory grouping structure (and similar concepts that appeared in applications; e.g. \cite{rossetti2018community,vehlow2015visualizing})) as a mathematical object, which we call \emph{formigrams} \cite{kim2018CCCG}.\footnote{The name formigram is a combination of the words {formicarium} and {diagram}. A formicarium or ant farm is an enclosure for keeping ants under semi-natural conditions \cite{wiki}. Visually, a formigram is reminiscent of a formicarium (cf. Figure \ref{fig:entire picture} (B)). } A formigram is a \emph{constructible cosheaf} over the real line   \cite{curry2014sheaves,curry2020classification} valued in \emph{the category of partitions}; in plain language, a formigram is a `zigzag' diagram of partitions.  This category can be seen as either a lattice (when an underlying set is specified) or a category whose morphisms are monic (when an underlying set is \emph{not} specified). This identification allows us to put formigrams into the context of the theories of persistence \cite{carlsson2009topology,edelsbrunner2008persistent}, lattices \cite{birkhoff1948lattice}, and categories \cite{bubenik2014categorification,mac2013categories}. The \textbf{second} goal is to exploit this established identity of formigrams for metrizing the space of formigrams, summarizing formigrams, and smoothing formigrams. In particular, the metrization of the space of formigrams addresses a question formulated by Buchin et al. regarding the quantification of the difference between two collective motions via trajectory grouping structures \cite[Section 6]{buchin2013trajectory}. In order to achieve our second goal, we deploy recent ideas from zigzag or multiparameter persistence \cite{bjerkevik2016stability,botnan2018algebraic,zigzag,dey2021computing}, constructible cosheaves \cite{curry2020classification,de2016categorified}, and generalized persistence diagrams \cite{kim2018generalized,patel2018generalized}.

\paragraph{Contributions.}	In a nutshell, we establish a stable and (mostly) functorial summarization process illustrated in Figures \ref{fig:entire picture} and \ref{fig:maximal group diagram}. This entails appropriate identification of  categories (Table \ref{table:categories}), functors (Table \ref{table:functors}), and metrics (Table \ref{table:names of cosheaves}), as well as appropriate usage of \emph{generalized rank} and \emph{M\"obius inversion} (Table \ref{tab:comparison}). More details follow: 

\begin{itemize}[leftmargin=*] \setlength\itemsep{0em}
	    \item We adapt recent ideas from generalized persistence diagrams and M\"obius inversion \cite{rota1964foundations} in order to define the \emph{persistence clustergram} of a formigram (cf. Figure \ref{fig:entire picture} (B),(G)). The persistence clustergram is a \emph{finer} invariant than the  Reeb graph of a formigram and thus it is also finer than the zigzag barcode of the Reeb graph (cf. Figure \ref{fig:entire picture} (E),(F)). In fact, the persistence clustergram is a \emph{complete invariant} of formigrams. See \textbf{Definition \ref{def:persistence clustergram}} and  \textbf{Remark \ref{rem:persistence clustergram is complete}}.

	    \item 	   We show that the maximal groups~\cite{buchin2013trajectory} of a set $\mathbf{X}$ of trajectories can be obtained from the formigram $\theta_\mathbf{X}$ associated to $\mathbf{X}$. 
	   More specifically, to obtain the maximal groups, we use the M\"obius inversion of a suitable notion of \emph{rank invariant} associated to the formigram.  This leads us to the fact that the collection of all maximal groups (which we call the \emph{maximal group diagram}) is another complete invariant of formigrams. 
	    This also implies that the collection of all maximal groups defined by Buchin et al.  \cite{buchin2013trajectory} is not just an invariant of collections of trajectories but it is also an invariant of formigrams.
	    See \textbf{Definition \ref{def:maximal group diagram}} and \textbf{Remark \ref{rem:max diagram is complete}}.	 
	    
	    \item  By utilizing the join operation on the lattice of (sub)partitions, we metrize the space of formigrams with a distance function $\dintf$ (the interleaving distance between formigrams). 
	    $\dintf$ dovetails with celebrated metrics in topological data analysis, which makes    the summarization pipeline that is illustrated in Figure \ref{fig:entire picture} is entirely stable.  
	   See \textbf{Proposition \ref{prop:formigram interleaving is mor discriminative than the reeb interleaving}, Corollary \ref{cor:df-db stability}, Theorems \ref{prop:stability from dg to formi}} and \textbf {\ref{thm:stability of betti}}. 
 \end{itemize}

 \begin{figure}
    \centering
    \includegraphics[width=0.5\textwidth]{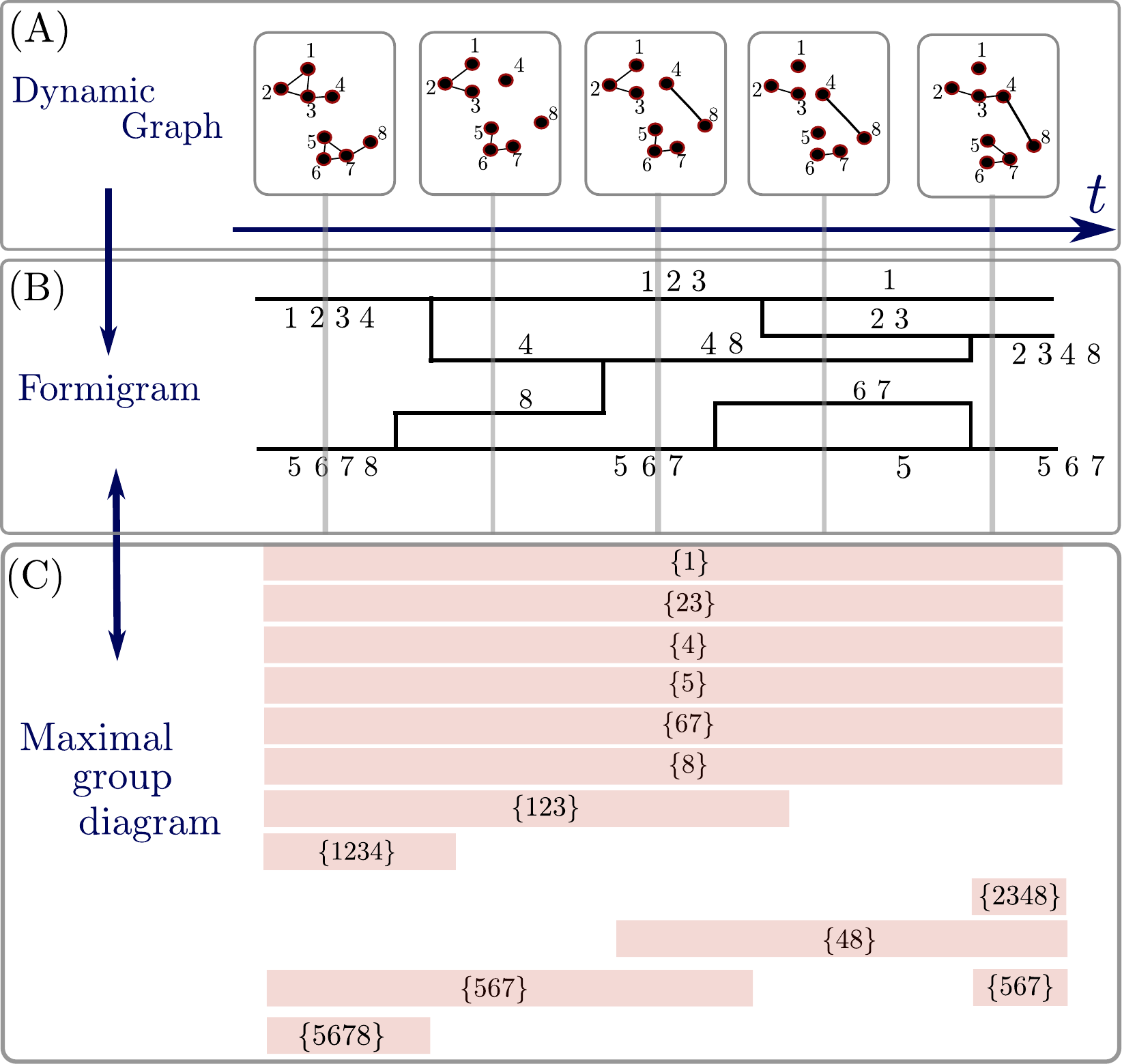}
    \caption{The \emph{maximal groups} introduced by Buchin et al. \cite{buchin2013trajectory} can be obtained via the M\"obius inversion of the $\bigwedge$-rank function of a formigram (cf. Definition \ref{def:maximal group diagram}). NB: In panel (C), commas are omitted in each set, e.g. $\{23\}$ means the set $\{2,3\}$.}
    \label{fig:maximal group diagram}
\end{figure}

	    Several subsidiary contributions follow:
	    \begin{itemize}[leftmargin=*] \setlength\itemsep{0em}
	    \item We define \emph{dynamic graphs} as constructible cosheaves over $\R$ valued in the lattice of all subgraphs of a \ok{given} complete graph.
	    By doing this, we are able to harness machinery from  \ok{applied topology} which allows us to both quantify the difference between dynamic graphs and induce the invariants of dynamic graphs illustrated in Figure \ref{fig:entire picture}. See \textbf{Definition \ref{def:dyn graphs}.}
	    \item 
	    We identify a sufficient condition on a dynamic metric space $\gamma_X$ \cite{kim2020spatiotemporal} so that its  dynamic \emph{Rips graph}  $\rips^{1}(\gamma_X)$, $\delta\geq 0$ (1-skeleton of Rips complexes), define a dynamic graph as described in the previous item. For example, our result implies that a finite collection of piecewise linear trajectories in Euclidean space induces a dynamic graph via the Rips graph functor. See \textbf{Proposition \ref{prop:from dms to filt2}.}
	    \item We show that the $\lambda$-slack interleaving distance between dynamic metric spaces \cite{kim2020spatiotemporal} is a metric. Also we show that different choice of $\lambda>0$ does not change the topology of the metric space of dynamic metric spaces. See \textbf{Theorem \ref{thm:lambda metric}} and \textbf{Proposition \ref{prop:equivalence}.}
	    \item 	We observe that \emph{robust grouping structure} by Buchin et al. \cite{buchin2013trajectory} can be formulated as an \emph{intrinsic} smoothing operation on formigrams. Smoothing operations on formigrams is entirely compatible with the Reeb graph smoothing operation.  We also clarify the effect of smoothing operations on the zigzag barcode of an input formigram. See \textbf{Remark \ref{rem:difference from Buchin's smoothing}, Propositions \ref{prop:barcode} and \ref{prop:compatibility}.} 
	    \item We clarify the computational complexity of every metric that is introduced in this paper. See \textbf{Theorem \ref{thm:DG metric-complexity}, Remark \ref{rem:DG and formi distance computation}  (\ref{item:DG and formi distance computation2})}, and \textbf{Theorems \ref{thm:complex}} and \textbf{ \ref{thm:complexity-dIF}} (Theorem \ref{thm:complexity-dIF} appeared in the conference paper by the same authors \cite{kim2018CCCG}). 
	\end{itemize}

	\paragraph{Other related work.} 	In \cite{kim2020analysis}, a collection of dynamic graphs that naturally arise from an artificial life program, called \emph{Boids} \cite{boids}, were successfully classified by the bottleneck distance on their zigzag barcodes (cf. Figure \ref{fig:entire picture} (F)). An implementation is available in \cite{clause2021zigzag}.

	\ok{In \cite{vangoethem2016grouping}, van Goethem et al. presented algorithms and data structures that support the interactive analysis of the trajectory grouping structure.} In \cite{van2018refined,wiratma2019experimental}, Van Kreveld et al. proposed an alternative to the aforementioned definition of \emph{groups} of \cite{buchin2013trajectory} together with experimental evaluation related to identifying groups from real or simulated pedestrian data.

	Sinhuber and Ouellette carried out statistical analysis on time-varying connectivity graphs in order to characterize swarms of midges \cite{sinhuber2017phase}. Munch established stability of time-varying persistence diagrams derived from dynamic point clouds in $\R^d$ \cite{munch2013applications}.  Kim and M\'emoli proposed a method to encode spatiotemporal topology of dynamic metric spaces into multiparameter persistence modules  \cite{kim2020spatiotemporal}. While \cite{kim2020spatiotemporal} provides useful tools for characterizing and classifying dynamic metric spaces according to their spatiotemporal homological features \cite{clause2020spatiotemporal}, no visualizable summary was provided which is one of the novel contributions of the present work.

We remark that, in this paper, only clustering features of dynamic \emph{graphs} \cite{adams2015evasion,de2006coordinate,gamble2012applied,hajij2017visual} are studied  (i.e. the zeroth order homological features), whereas \cite{kim2020spatiotemporal} considers arbitrary order homological features of dynamic \emph{metric spaces} \cite{sinhuber2017phase,topaz,xian2020capturing}.
	
	R. González-Díaz et al. devised the so-called spatiotemporal barcode as a visual tool to encode the lifespan of connected components on an image sequence over time \cite{gonzalez2015spatiotemporal}. Dey and Hou proposed efficient algorithms for computing zigzag persistent homology of dynamic graphs \cite{dey2021computing}. Kim, M\'emoli, and Stefanou showed in \cite{kim2019interleaving} that the aforementioned distance $\dintf$ between formigrams is  closely related to both the Hausdorff distance \cite{buragobook} and the erosion distance \cite{patel2018generalized}. Rolle and Scoccola  considered a variation of $\dintf$ for comparisons of multiparameter hierarchical clusterings \cite{rolle2020stable}. Ghrist and Riess studied a Hodge theory for cellular sheaves valued in lattices (i.e. functors from the face posets of cell complexes to lattices) \cite{ghrist2020cellular}.

\paragraph{Acknowledgements.} This work was partially supported by NSF grants IIS-1422400, CCF-1526513, DMS-1723003, and CCF-1740761. We thank Zane Smith for providing an example of non-planar formigram in Example \ref{example:non-planar}. Also, we thank Michael Lesnick for useful comments about the paper. WK thanks Amit Patel for beneficial discussions regarding topics related to Section \ref{sec:summarizing formigrams via mobius}.

\paragraph{Outline.} Section \ref{sec:prelim} includes preliminaries from category theory, poset theory, and persistence theory. Section \ref{sec:DGs} introduces notions of dynamic graphs, formigrams, and invariants of formigrams that are relevant to an algebraic structure of zigzag persistence. Section \ref{sec:metrics-new} introduces interleaving distances between dynamic graphs and between formigrams. Also, the stability of algebraic invariants that were introduced in the previous section is shown therein. \ok{Section \ref{sec:about partition category}  establishes a few categorical results that will be useful in later sections.} \ok{Section \ref{sec:summarizing formigrams via mobius} is the core of this paper. We introduce several combinatorial invariants of formigrams such as \emph{maximal group diagrams} and \emph{persistence clustergrams} and discuss their stability. Section \ref{sec:DMSs} we discuss how to convert dynamic metric spaces into dynamic graphs via the Rips graph functor and its stability. Section \ref{sec:discussion} concludes this work. For readability we
have relegated some proofs, remarks, and examples to an appendix: Sections \ref{sec:details}, \ref{sec:distance between weighted reeb graphs}, \ref{sec:smoothing}, \ref{sec:0-slack DMS intereleaving}, and \ref{sec:higher dimensional barcodes}.}

	
	\section{Preliminaries} \label{sec:prelim}
	In Sections \ref{sec:category} and \ref{sec:categories} we introduce the posets, categories, and functors that are considered throughout the paper. In Section \ref{sec:intervaldecomp} we review the notion of interval decomposable persistence modules. In Section \ref{sec:constructible cosheaves} we recall both the notion of constructible cosheaf over the real line and a suitable notion of interleaving.

	\subsection{Poset theory elements}\label{sec:category}
	
Any nonempty set with a partial order determines a \emph{poset}. Given a poset $\Po$, we call any subset $\mathbf{Q}$ of $\Po$ with the partial order obtained by restricting that of $\Po$ to $\mathbf{Q}$ a \emph{subposet of $\Po.$} We introduce posets that are considered in this paper.  First, the following posets will be used as domain posets of order-preserving maps or indexing posets of  functors.
	
	\begin{enumerate}[itemsep=-1ex]
\item A poset $\PP$ is said to be a \emph{join-semilattice} (resp. \emph{meet-semilattice}) if $\PP$ contains the least upper bound $p\vee q$ (resp. greatest lower bound $p\wedge q$) of any pair of points  $p,q\in \Po$. If $\PP$ is both join- and meet-semilattice, then $\PP$ is said to be a \emph{lattice}. \label{item:lattice}
		
		\item The poset $\R^n$ with  order $(a_1,a_2,\cdots,a_n)\leq (b_1,b_2,\cdots, b_n)$ if and only if $a_i\leq b_i$ for all $1\leq i\leq n$.
		
		\item $\Z_+$ and $\R_+$ will denote the sets of nonnegative integers and nonnegative real numbers, respectively.
		
		\item Let $\Pb$ be a poset. By $\Pb^\mathrm{op}$ we mean the opposite poset of $\Pb$, i.e. for any $p,q\in \Pb$,  $p\leq q$ in $\Pb^{\mathrm{op}}$ if and only if $q\leq p$ in $\Pb$.
		\item The poset $\R^{\mathrm{op}}\times \R$ where $(a_1,a_2)\leq (b_1,b_2)$ if and only if $a_1\geq b_1$ and $a_2\leq b_2$ for the usual order $\leq$ on $\R$.  	
		
		\item The poset $\U$ consists of nonempty finite closed real intervals ordered by inclusion.  $\U$ will be identified with the subposet of $\R^\mathrm{op}\times \R$ consisting of $(a,b)$ with $a\leq b$ in $\R$ via the bijection $[a,b](\in \Int)\leftrightarrow (a,b)(\in \R^\mathrm{op}\times \R)$ (cf. Figure \ref{fig:kan poset} (A)).\label{U}
	\end{enumerate}

	\paragraph{Lattices of subgraphs and subpartitions.} In the rest of this section, $X$ will stand for some nonempty finite sets. We review lattice structures on the collection of subgraphs of the complete graph on $X$ and on the collection of (sub)partitions of $X$. 

\begin{definition}[Lattice of subgraphs]\label{def:graph poset}
	    By $\graph(X)$ we denote the lattice of subgraphs of the complete graph on the vertex set $X$ ordered by inclusion. For any $G,H\in \graph(X)$, the union of $G$ and $H$ is the join $G\vee H$, i.e. the graph whose vertex set (resp. edge set) is the union of the vertex sets (resp. edge sets) of $G$ and $H$. The intersection of $G$ and $H$ is the meet $G\wedge H$.
\end{definition}
	
\begin{definition}[Lattice of subpartitions]\label{def:subpart}  We call any partition $P$ of a subset $X'$ of $X$ a \emph{subpartition} of $X$. In this case we call $X'$ the \emph{underlying set of $P$}, i.e. $X'=\bigcup P$.  Each element of $P$ is called a \emph{block}. A partition of the empty set is defined as the empty set. 
		By $\subpart(X)$, we denote the set of \emph{all subpartitions of $X$}.	By $\Part(X)$, we denote the subcollection of $\subpart(X)$ consisting solely of partitions of the entire $X$. 
\end{definition}
	For example,  for $X:=\{x_1,x_2,x_3\}$, both $\{\{x_1\},\{x_2\}\}$ and $\{\{x_1,x_2,x_3\}\}$ belong to $\subpart(X)$. 
	These subpartitions will henceforth be  written simply as $\{x_1|x_2\}$ and $\{x_1x_2x_3\}$ respectively. 
	
	The collection $\subpart(X)$ forms a lattice. Given $P,Q\in\subpart(X)$, by  $P\leq Q$ we mean ``$P$ is finer than or equal to $Q$'', i.e. for all $B\in P$, there exists $C\in Q$ such that $B\subset C$. Given any $P,Q\in \subpart(X)$, the join $P\vee Q$ is the finest common coarsening of $P$ and $Q$. The meet $P\wedge Q$ is the coarsest common refinement of $P$ and $Q$.

\begin{example}Let $X:=\{x_1,x_2,x_3,x_4\}$. For $P_1=\{x_1|x_2\}, P_2=\{x_2x_3\},\ \mbox{and}\  P_3=\{x_1x_4\}$ in $\subpart(X)$, we have: 
 $\bigwedge_{i=1}^2 P_i = \{x_2\}$,  $\bigwedge_{i=1}^3 P_i = \emptyset$, $\bigvee_{i=1}^2 P_i=\{x_1|x_2x_3\}$, and $\bigvee_{i=1}^3 P_i=\{x_1x_4|x_2x_3\}$.
\end{example}

We define the \emph{path components functor} (i.e. order-preserving map) $\pi_0:\graph(X)\rightarrow \subpart(X)$, \ok{which is utilized for the summarization process depicted as the arrow (A)$\rightarrow$(B) in Figure \ref{fig:entire picture}.}
	
	\begin{definition}\label{def:pi zero} Given any graph $G_X=(X,E_X)$, define the partition $\pi_0(X,E_X):=X/\sim$ of $X$ where $\sim$ stands for the equivalence relation on $X$ defined by $x\sim x'$ if and only if there exists a sequence $x=x_1,\ x_2, \ldots, x_n=x'$ of points in $X$ such that $\{x_i,x_{i+1}\} \in E_X$ for each $i\in\{1,\ldots,n-1\}$. 
	\end{definition}
	Whenever $G_X\leq H_X$ in $\graph(X)$, we have that $\pi_0(G_X)\leq \pi_0(H_X)$ in $\subpart(X)$.

	\subsection{Category theory elements}\label{sec:categories}
	
		In this section we introduce categories and functors that appear in Tables \ref{table:categories} and \ref{table:functors} respectively. Consult \cite{mac2013categories} for general definitions related to category theory.  Given any category $\mathbf{C}$, we will denote the collection of objects in $\mathbf{C}$ by $\mathrm{Ob}(\mathbf{C}).$

\begin{table}[]
\begin{center}
\begin{tabular}{|l|l|l|l|}
\hline
  \cellcolor[HTML]{EFEFEF}Categories & \cellcolor[HTML]{EFEFEF}Objects                                                                  & \cellcolor[HTML]{EFEFEF}Morphisms     & \cellcolor[HTML]{EFEFEF}  \\ \hline
                  
 $\graph(X)$                    & Subgraphs of the complete graph on $X$                                                                                & Inclusion                
& Def. \ref{def:graph poset}   \\ \hline
 $\subpart(X)$                    & Subpartitions of $X$                                                                                & Refinement       & Def. \ref{def:subpart}               
\\ \hline
  $\Part$                          & \thead{Pairs $(X,P_X)$ where $X$ is a finite set \\ and $P_X$ is a partition of $X$}                        & Block-preserving injective maps        & Def. \ref{def:partition category} \\ \hline
  $\wsets$                         & \thead{Pairs $(X,w_X)$ where $X$ is a finite set and \\ a weight function  $w_X:X\rightarrow \mathbb{N}$} & Weight-observing maps        & Def. \ref{def:weighted sets}      \\ \hline
  $\sets$                          & Finite sets                                                                                      & Set maps                                                          & \\ \hline
  $\vect$                          & Fin. dim. vector spaces                                                                 & Linear maps                                               &     \\ \hline 
\end{tabular}

\end{center}
\caption{\ok{Categories that are considered throughout the paper. Those are connected by the functors in Table \ref{table:functors}.}}\label{table:categories}
\end{table}

\begin{table}[]
    \centering
\begin{tabular}{|c|}
\hline
\begin{tikzcd}
{\graph(X)} \arrow[rr,"\pi_0","\mbox{Def. \ref{def:pi zero}}"']&& \subpart(X) \arrow[rr,"\unlabel","\mbox{Def. \ref{def:three functors} \ref{item:unlabeling functor}}"']
&& \Partin \arrow[rr,"\mathcal{A}","\mbox{Def. \ref{def:three functors} \ref{item:agglomerating functor}}"']  && \wsets \arrow[rr,"\unweight","\mbox{Def. \ref{def:three functors} \ref{item:unweighting functor}}"'] && \sets \arrow[rr,"\free","\mbox{Def. \ref{def:free}}"']&& \vect
\end{tikzcd}\\
\hline
\end{tabular}

    \caption{Functors that connect the categories in Table \ref{table:categories}.}
    \label{table:functors}
\end{table}
		
\begin{remark}[Posets as categories]\label{rem:posets are categories}\begin{enumerate}[label=(\roman*)]
    \item Any poset $\Po$ will be regarded as a category: Objects are elements in $\Po$. For any $p,q\in \Po$, there exists a unique morphism $p\rightarrow q$ if and only if $p\leq q$. Therefore, $p\leq q$ in $\Po$ will denote the unique morphism $p\rightarrow q$. In this perspective, any order-preserving map between posets is a functor. Any subposet $\mathbf{Q}$ of $\Po$ is a full subcategory of $\Po$. \label{item:posets are categories1}
    \item 	\ok{Since $\graph(X)$ is a poset, given \emph{any} poset $\Pb$ and \emph{any} functor $F:\Pb\rightarrow \graph(X)$ (i.e. order-preserving map), the limit and colimit of $F$ are $\bigcup_{p\in \Pb} F_p$ and $\bigcap_{p\in \Pb} F_p$,  respectively. Similarly, since $\subpart(X)$ is a poset, given \emph{any} poset $\Pb$ and \emph{any} functor $F:\Pb\rightarrow \subpart(X)$, the limit and colimit of $F$ are $\bigwedge_{p\in \Pb} F_p$ and $\bigvee_{p\in \Pb} F_p$,  respectively.} 
\end{enumerate}

\end{remark}

		The category $\sets$ consists of  finite sets with set maps. The category $\vect$ consists of finite-dimensional vector spaces over a fixed field $\F$ with linear maps. 	
\begin{definition}\label{def:free}
The \emph{free functor} $\free:\sets\rightarrow \vect$ sends any set $S$ to the vector space $\free(S)$ which consists of formal linear combinations $\sum_i a_i s_i\ (a_i\in \mathbb{F},\ s_i\in S)$ of finite terms of elements in $S$ over the field $\mathbb{F}$. Also, given a set map $f:S\rightarrow T$, $\free(f)$ is the linear map from $\free(S)$ to $\free(T)$ obtained by linearly extending $f$. 
\end{definition}	

		For two categories $\mathbf{C}$ and $\mathbf{D}$, the category $\mathbf{C}^\mathbf{D}$ stands for the category of functors from $\mathbf{D}$ to $\mathbf{C}$ with objects being functors $\mathbf{C}\rightarrow\mathbf{D}$ and arrows being natural transformations. 		For two functors $F,G:\mathbf{C}\rightarrow \mathbf{D}$, we write $F\cong G$ whenever $F$ and $G$ are \emph{naturally isomorphic}, i.e. there exists a natural transformation $\tau:F\rightarrow G$ such that $\tau_c:F(c)\rightarrow G(c)$ is invertible for each $c\in \mathrm{Ob}(\mathbf{C})$. 
		
We introduce the category of partitions (without specifying a set to partition) and the category of weighted sets. These categories appear in Figure \ref{fig:entire picture} (B) and (C).

\begin{definition}\label{def:partition category}
In the \emph{category of partitions}, denoted by $\Part$, an object is a pair $(X,P_X)$ of a finite set $X$ and a partition $P_X$ of $X$. A morphism $f:(X,P_X)\rightarrow (Y,P_Y)$ is an \emph{injective} map $f:X\hookrightarrow Y$ which preserves the equivalence relation on $X$ induced by $P_X$, i.e. \ok{if $x$ and $x'$ belong to the same block in $P_X$, then so do $f(x)$ and $f(x')$ in $P_Y$.}
\end{definition}

Note that $f$ induces the map $f_\ast:P_X\rightarrow P_Y$ that sends each block $B$ to the unique block $C$ containing the image $f(B)$. The morphism $f$ is an isomorphism if both $f$ and $f_\ast$ are bijective. For example, let $X:=\{x_1,x_2,x_3\}$. Whereas $P_X=\{x_1x_2|x_3\}$ and $Q_X=\{x_1|x_2x_3\}$ are non-isomorphic objects in $\subpart(X)$, the pairs $(X,P_X)$ and $(X,Q_X)$ are isomorphic in $\Part$.

\ok{Next we define the category $\wsets$ of \emph{weighted sets}.}

\begin{definition}\label{def:weighted sets} In $\wsets$, an object is a pair $(X,w_X)$ of a finite set $X$ and a positive-integer-valued \emph{weight} map $w_X:X\rightarrow \NN$. A morphism $f:(X,w_X)\rightarrow (Y,w_Y)$ is a weight-observing set map $f:X\rightarrow Y$, i.e. for all $y\in Y$, $\sum_{x\in f^{-1}(y)} w_X(x) \leq w_Y(y)$.
\end{definition}

Notice the following: (1) If there exists a morphism from $(X,w_X)$ to $(Y,w_Y)$, then the \emph{total weight} of $(Y,w_Y)$ is at least that of $(X,w_X)$, i.e. $\sum_{x\in X}w_X(x)\leq \sum_{y\in Y}w_Y(y)$.  (2) Two objects $(X,w_X)$ and $(Y,w_Y)$ are isomorphic if and only if there exists a weight-\emph{preserving} bijection $X\rightarrow Y$. (3) If there exists a pair of morphisms in the opposite directions between two objects in $\wsets$, then the two object must be isomorphic.

\ok{We introduce the following three functors which are utilized for the summarization procedure depicted as the arrows (B) $\rightarrow$ (C) $\rightarrow$ (D) $\rightarrow$ (E)} in  Figure \ref{fig:entire picture}. See also Table \ref{table:functors}. Let $X$ be any nonempty finite set. 

\begin{definition}[Three functors]\label{def:three functors} 
\begin{enumerate}[label=(\roman*),itemsep=-1ex]
    \item The \emph{unlabeling functor} $\unlabel$ sends each $P\in \subpart(X)$ to the pair $(\bigcup P,P)$. For any pair $P\leq Q$ in $\subpart(X)$, the corresponding morphism $\unlabel(P\leq Q):(\bigcup P, P)\rightarrow (\bigcup Q, Q)$ is the inclusion map $\bigcup P \hookrightarrow \bigcup Q$. \label{item:unlabeling functor}
    \item The \emph{agglomerating functor} $\A$ sends each $(X,P_X)\in \ob(\Partin)$ to the pair $(P_X,|-|)\in\ob(\wsets)$ where $|-|:P_X\rightarrow \N$ is the size function, i.e. any block in $P_X$ is sent to its size (i.e. cardinality). Since any morphism $f:(X,P_X)\rightarrow (Y,P_Y)$ in $\Partin$ is an injective set map $X\hookrightarrow Y$, the induced map $f_\ast:P_X\rightarrow P_Y$ is a weight-observing map. \label{item:agglomerating functor}
    \item The \emph{unweighting functor} $\unweight$ simply forgets the weight function (i.e. the second entry) from each object in $\wsets$. \label{item:unweighting functor} 
\end{enumerate}
\end{definition}

	\subsection{Interval decomposable persistence modules}\label{sec:intervaldecomp}
	In this section we review the notion of interval decomposability of persistence modules.

	Let $\Pb$ be a poset. Any functor $F:\Po\rightarrow \vect$ will be called a \emph{$\Po$-indexed module}. This means that each $p\in \Pb$ is sent to a finite dimensional vector space $F(p)$ and each $p\leq q$ in $\Pb$ is sent to a linear map $F(p\leq q):F(p)\rightarrow F(q)$. In particular, for any $p\in \Pb$, $p\leq p$ is sent to the identity map on $F(p)$. Also, for any $p\leq q\leq r$ in $\Pb$, we have $F(p\leq r)=F(q\leq r)\circ F(p\leq q)$. 
	
	\begin{definition}[Intervals \cite{botnan2018algebraic}]\label{interval} Given a poset $\Po$, an \emph{interval} $\mathcal{J}$ of $\Po$ is any subset $\mathcal{J}\subset \Po$ such that
		\begin{enumerate}[label=(\roman*),itemsep=-1ex]
			\item $\mathcal{J}$ is nonempty.
			\item If $p,r\in \mathcal{J}$ and $p\leq q\leq r$, then $q\in \mathcal{J}$.
			\item (connectivity) For any $p,q\in \mathcal{J}$, there is a sequence $p=p_0,p_1,\cdots,p_l=q$ of elements of $\mathcal{J}$ with $p_i$ and $p_{i+1}$ comparable for $0\leq i\leq l-1$.\label{item:connectivity}
		\end{enumerate}
	\end{definition}

	\label{interval module}For $\mathcal{J}$ an interval in $\Po$, the \emph{interval module} ${I}^{\mathcal{J}}:\Po\rightarrow \vect$ is the $\Po$-indexed module where 
	
	$${I}^{\mathcal{J}}(p)=\begin{cases}
	\mathbb{F}&\mbox{if}\ p\in \mathcal{J},\\0
	&\mbox{otherwise.} 
	\end{cases}\hspace{20mm} {I^{\mathcal{J}}}(p\leq q)=\begin{cases} \mathrm{id}_\mathbb{F}& \mbox{if} \,\,p,q\in\incc,\\ 0&\mbox{otherwise.}\end{cases}$$
	
The direct sums of $\Po$-indexed modules are defined pointwise at each index $t\in \Pb$.	We say that a $\Po$-indexed module $F$ is \textit{decomposable} if $F$ is naturally isomorphic to $G_1\bigoplus G_2$ for some non-trivial $\Po$-indexed modules $G_1$ and $G_2$. Otherwise, we say that $F$ is \textit{indecomposable}. Any interval module is indecomposable \cite[Proposition 2.2]{botnan2018algebraic}. 
	A $\Po$-indexed module $F$ is \emph{interval decomposable} if there exists a multiset $\B(F)$ of intervals in $\Po$ such that 
	$$F\cong \bigoplus_{\mathcal{J}\in \B(F)}I^{\mathcal{J}}.$$\label{barcode}
	
	By Azumaya-Krull-Remak-Schmidt \cite{azumaya1950corrections}, the multiset $\B(F)$ is unique if $F$ is interval decomposable.  In this case, we call $\B(F)$ the \emph{barcode} of $F.$
	
	\subsection{Constructible cosheaves over $\R$,  their interleavings, and their \ok{smoothing operations}}\label{sec:constructible cosheaves}
	
	In this section we review the notion of constructible cosheaves over $\R$,  their interleavings, and their \ok{smoothing} operations.  \ok{Constructible cosheaves over $\R$ are fully characterized by ``zigzag diagrams over $\R$"  \cite{curry2020classification,de2016categorified}.} 
	
	\begin{definition}[Zigzag poset]\label{def:intervals in zz} Let $\ZZ$ be the subposet of $\R^{\mathrm{op}}\times \R$ given by $\ZZ:=\{(k,l): k\in \Z,\ l\in \{k,k-1\}\}$  (cf. Figure \ref{fig:intervals}).  \ok{By $\Int(\ZZ)$ we denote the poset of all \emph{finite} intervals in $\ZZ$ ordered by inclusion.} 
	\end{definition}
	
	\begin{notation}[\cite{botnan2018algebraic}]\label{not:intervals in zz}Letting $<$ denote the strict partial order on $\Z^2$ (not on $\Z^{\mathrm{op}}\times \Z$), every interval in $\ZZ$ falls into one of the four types below:
\begin{align*}
		(b,d)_{\ZZ}&:=\{(i,j)\in \ZZ: (b,b)<(i,j)<(d,d)\}&&\mbox{for}\   b<d\in\Z ,\\
		[b,d)_{\ZZ}&:=\{(i,j)\in \ZZ: (b,b)\leq (i,j)<(d,d)\}&&\mbox{for}\   b<d\in\Z,\\
		(b,d]_{\ZZ}&:=\{(i,j)\in \ZZ: (b,b)< (i,j)\leq(d,d)\}&& \mbox{for}\   b<d\in\Z,\\
		[b,d]_{\ZZ}&:=\{(i,j)\in \ZZ: (b,b)\leq (i,j)\leq(d,d)\}&& \mbox{for}\   b\leq d\in\Z.
		\end{align*}
		
		See Figure \ref{fig:intervals} for examples. We let $\langle b,d \rangle_{\ZZ}$ denote any of the above sets without specifying its type. 	Also, by $\langle b,d\rangle$ for $b\in \{-\infty\}\cup\R$ and $d\in \R\cup\{\infty\}$, we denote any of the intervals $(b,d), [b,d), (b,d]$ or $[b,d]$ in $\R$. 
	\end{notation}

	\begin{figure}
		\begin{center}
			\begin{tikzpicture}[thick,scale=0.6, every node/.style={scale=0.6}]
			\fill[pink, opacity=0.4] (-0.5,-1.5)--(0.5,-1.5)--(0.5,-0.5)--(1.5,-0.5)--(1.5,0.5)--(0.5,0.5)--(-0.5,0.5); 
			\fill (-2,-2) circle (0.7mm);
			\fill (-1,-2) circle (0.7mm);	
			\fill (-1,-1) circle (0.7mm); 
			\fill (0,-1) circle (0.7mm);
			\fill (0,0) circle (0.7mm); 
			\fill (1,0) circle (0.7mm); 
			\fill (1,1) circle (0.7mm); 
			\fill (2,1) circle (0.7mm);
			\fill (2,2) circle (0.7mm);
			\fill (2.5,2.5) circle (0.4mm);
			\fill (2.6,2.6) circle (0.4mm);
			\fill (2.7,2.7) circle (0.4mm);
			\fill (-2.5,-2.5) circle (0.4mm);
			\fill (-2.6,-2.6) circle (0.4mm);
			\fill (-2.7,-2.7) circle (0.4mm);
			\draw[->,>=stealth'] (-1,-2)--(-1.9,-2);
			\draw[->,>=stealth'] (-1,-2)--(-1,-1.1);
			\draw[->,>=stealth'] (0,-1)--(-0.9,-1);
			\draw[->,>=stealth'] (0,-1)--(0,-0.1);
			\draw[->,>=stealth'] (1,0)--(0.1,0);
			\draw[->,>=stealth'] (1,0)--(1,0.9);
			\draw[->,>=stealth'] (2,1)--(1.1,1);
			\draw[->,>=stealth'] (2,1)--(2,1.9);
			\draw[->,>=stealth'] (-2,-2.5)--(-2,-2.1);
			\draw[->,>=stealth'] (2.5,2)--(2.1,2);
			\draw (-2,-2) node [anchor=south] {$(-2,-2)$};
			\draw (-1,-1) node [anchor=south] {$(-1,-1)$};
			\draw (0,0) node [anchor=south] {$(0,0)$};  
			\draw (1,1) node [anchor=south] {$(1,1)$};
			\draw (2,2) node [anchor=south] {$(2,2)$};
			\draw (-0.5,-2) node [anchor=north] {$(-1,-2)$};
			\draw (0.5,-1) node [anchor=north] {$(0,-1)$};
			\draw (1.5,0) node [anchor=north] {$(1,0)$};
			\draw (2.5,1) node [anchor=north] {$(2,1)$};	
			\draw (0,-4) node {\Large{$(-1,1)_\ZZ$}};			
			\end{tikzpicture}				
			\begin{tikzpicture}[thick,scale=0.6, every node/.style={scale=0.6}]
			\fill[pink, opacity=0.4] (-1.5,-1.5)--(0.5,-1.5)--(0.5,-0.5)--(1.5,-0.5)--(1.5,0.5)--(-0.5,0.5)--(-0.5,-0.5)--(-1.5,-0.5); 
			\fill (-2,-2) circle (0.7mm);
			\fill (-1,-2) circle (0.7mm);	
			\fill (-1,-1) circle (0.7mm); 
			\fill (0,-1) circle (0.7mm);
			\fill (0,0) circle (0.7mm); 
			\fill (1,0) circle (0.7mm); 
			\fill (1,1) circle (0.7mm); 
			\fill (2,1) circle (0.7mm);
			\fill (2,2) circle (0.7mm);
			\fill (2.5,2.5) circle (0.4mm);
			\fill (2.6,2.6) circle (0.4mm);
			\fill (2.7,2.7) circle (0.4mm);
			\fill (-2.5,-2.5) circle (0.4mm);
			\fill (-2.6,-2.6) circle (0.4mm);
			\fill (-2.7,-2.7) circle (0.4mm);
			\draw[->,>=stealth'] (-1,-2)--(-1.9,-2);
			\draw[->,>=stealth'] (-1,-2)--(-1,-1.1);
			\draw[->,>=stealth'] (0,-1)--(-0.9,-1);
			\draw[->,>=stealth'] (0,-1)--(0,-0.1);
			\draw[->,>=stealth'] (1,0)--(0.1,0);
			\draw[->,>=stealth'] (1,0)--(1,0.9);
			\draw[->,>=stealth'] (2,1)--(1.1,1);
			\draw[->,>=stealth'] (2,1)--(2,1.9);
			\draw[->,>=stealth'] (-2,-2.5)--(-2,-2.1);
			\draw[->,>=stealth'] (2.5,2)--(2.1,2);
			\draw (-2,-2) node [anchor=south] {$(-2,-2)$};
			\draw (-1,-1) node [anchor=south] {$(-1,-1)$};
			\draw (0,0) node [anchor=south] {$(0,0)$};  
			\draw (1,1) node [anchor=south] {$(1,1)$};
			\draw (2,2) node [anchor=south] {$(2,2)$};
			\draw (-0.5,-2) node [anchor=north] {$(-1,-2)$};
			\draw (0.5,-1) node [anchor=north] {$(0,-1)$};
			\draw (1.5,0) node [anchor=north] {$(1,0)$};
			\draw (2.5,1) node [anchor=north] {$(2,1)$};	
			\draw (0,-4) node {\Large{$[-1,1)_\ZZ$}};			
			\end{tikzpicture}	\begin{tikzpicture}[thick,scale=0.6, every node/.style={scale=0.6}]
			\fill[pink, opacity=0.4] (-0.5,-1.5)--(0.5,-1.5)--(0.5,-0.5)--(1.5,-0.5)--(1.5,1.5)--(0.5,1.5)--(0.5,0.5)--(-0.5,0.5); 
			\fill (-2,-2) circle (0.7mm);
			\fill (-1,-2) circle (0.7mm);	
			\fill (-1,-1) circle (0.7mm); 
			\fill (0,-1) circle (0.7mm);
			\fill (0,0) circle (0.7mm); 
			\fill (1,0) circle (0.7mm); 
			\fill (1,1) circle (0.7mm); 
			\fill (2,1) circle (0.7mm);
			\fill (2,2) circle (0.7mm);
			\fill (2.5,2.5) circle (0.4mm);
			\fill (2.6,2.6) circle (0.4mm);
			\fill (2.7,2.7) circle (0.4mm);
			\fill (-2.5,-2.5) circle (0.4mm);
			\fill (-2.6,-2.6) circle (0.4mm);
			\fill (-2.7,-2.7) circle (0.4mm);
			\draw[->,>=stealth'] (-1,-2)--(-1.9,-2);
			\draw[->,>=stealth'] (-1,-2)--(-1,-1.1);
			\draw[->,>=stealth'] (0,-1)--(-0.9,-1);
			\draw[->,>=stealth'] (0,-1)--(0,-0.1);
			\draw[->,>=stealth'] (1,0)--(0.1,0);
			\draw[->,>=stealth'] (1,0)--(1,0.9);
			\draw[->,>=stealth'] (2,1)--(1.1,1);
			\draw[->,>=stealth'] (2,1)--(2,1.9);
			\draw[->,>=stealth'] (-2,-2.5)--(-2,-2.1);
			\draw[->,>=stealth'] (2.5,2)--(2.1,2);
			\draw (-2,-2) node [anchor=south] {$(-2,-2)$};
			\draw (-1,-1) node [anchor=south] {$(-1,-1)$};
			\draw (0,0) node [anchor=south] {$(0,0)$};  
			\draw (1,1) node [anchor=south] {$(1,1)$};
			\draw (2,2) node [anchor=south] {$(2,2)$};
			\draw (-0.5,-2) node [anchor=north] {$(-1,-2)$};
			\draw (0.5,-1) node [anchor=north] {$(0,-1)$};
			\draw (1.5,0) node [anchor=north] {$(1,0)$};
			\draw (2.5,1) node [anchor=north] {$(2,1)$};	
			\draw (0,-4) node {\Large{$(-1,1]_\ZZ$}};			
			\end{tikzpicture}	\begin{tikzpicture}[thick,scale=0.6, every node/.style={scale=0.6}]
			\fill[pink, opacity=0.4] (-1.5,-1.5)--(0.5,-1.5)--(0.5,-0.5)--(1.5,-0.5)--(1.5,1.5)--(0.5,1.5)--(0.5,0.5)--(-0.5,0.5)--(-0.5,-0.5)--(-1.5,-0.5); 
			\fill (-2,-2) circle (0.7mm);
			\fill (-1,-2) circle (0.7mm);	
			\fill (-1,-1) circle (0.7mm); 
			\fill (0,-1) circle (0.7mm);
			\fill (0,0) circle (0.7mm); 
			\fill (1,0) circle (0.7mm); 
			\fill (1,1) circle (0.7mm); 
			\fill (2,1) circle (0.7mm);
			\fill (2,2) circle (0.7mm);
			\fill (2.5,2.5) circle (0.4mm);
			\fill (2.6,2.6) circle (0.4mm);
			\fill (2.7,2.7) circle (0.4mm);
			\fill (-2.5,-2.5) circle (0.4mm);
			\fill (-2.6,-2.6) circle (0.4mm);
			\fill (-2.7,-2.7) circle (0.4mm);
			\draw[->,>=stealth'] (-1,-2)--(-1.9,-2);
			\draw[->,>=stealth'] (-1,-2)--(-1,-1.1);
			\draw[->,>=stealth'] (0,-1)--(-0.9,-1);
			\draw[->,>=stealth'] (0,-1)--(0,-0.1);
			\draw[->,>=stealth'] (1,0)--(0.1,0);
			\draw[->,>=stealth'] (1,0)--(1,0.9);
			\draw[->,>=stealth'] (2,1)--(1.1,1);
			\draw[->,>=stealth'] (2,1)--(2,1.9);
			\draw[->,>=stealth'] (-2,-2.5)--(-2,-2.1);
			\draw[->,>=stealth'] (2.5,2)--(2.1,2);
			\draw (-2,-2) node [anchor=south] {$(-2,-2)$};
			\draw (-1,-1) node [anchor=south] {$(-1,-1)$};
			\draw (0,0) node [anchor=south] {$(0,0)$};  
			\draw (1,1) node [anchor=south] {$(1,1)$};
			\draw (2,2) node [anchor=south] {$(2,2)$};
			\draw (-0.5,-2) node [anchor=north] {$(-1,-2)$};
			\draw (0.5,-1) node [anchor=north] {$(0,-1)$};
			\draw (1.5,0) node [anchor=north] {$(1,0)$};
			\draw (2.5,1) node [anchor=north] {$(2,1)$};
			\draw (-2.5,2.5) node {};	
			\draw (0,-4) node {\Large{$[-1,1]_\ZZ$}};			
			\end{tikzpicture}
		\end{center}
		\caption{\label{fig:intervals}The points falling into the shaded regions comprise the intervals $(-1,1)_\ZZ, [-1,1)_\ZZ, (-1,1]_\ZZ$ and $[-1,1]_\ZZ$ of the poset $\ZZ$, respectively in order.}
	\end{figure}

\paragraph{Interleavings between $\Int$-indexed functors \cite{botnan2018algebraic}.}

	 For $\eps\geq 0$, and $I=[b,d]\in \Int$, let $I^\eps:=[b-\eps,d+\eps]\in \Int$.

	\begin{definition}[Interleaving distance]\label{def:interleaving} 
	Let $\eps\geq 0$ and $\mathbf{C}$ be an arbitrary category. Two functors $F,G:\U\rightarrow \mathbf{C}$ are said to be \emph{$\eps$-interleaved} if there exist collections of morphisms $f=(f_I:F(I)\rightarrow G(I^{\eps}))_{I\in \U}$ and $g=(g_J:G(J)\rightarrow F(J^{\eps}))_{J\in \U}$ satisfying the following: \begin{enumerate}
			\item For all $I,J\in \U,$\hspace{3mm}   $g_{I^{\eps}}\circ f_I= F(I\leq I^{2\eps}) \hspace{2mm}\mbox{and}\hspace{2mm} f_{J^{\eps}}\circ g_J= G(J\leq J^{2\eps}).$
			\item For all $I\leq J\in \U,$ $G(I^{\eps}\leq J^{\eps})\circ f_I=f_J\circ F(I\leq J)$ and \hbox{$F(I^{\eps}\leq J^{\eps})\circ g_I=g_J\circ G(I\leq J).$}
		\end{enumerate} In this case, we call $(f,g)$ an \emph{$\eps$-interleaving pair}.
		For $F,G\in \mathrm{Ob}(\mathbf{C}^\U),$ the interleaving distance between them is
		$$\dint(F,G):=\inf\{\eps\geq 0: F\ \mbox{and}\ G\ \mbox{are $\eps$-interleaved}\}.$$
		If $F$ and $G$ are not $\eps$-interleaved for any $\eps\geq0$, then $\dint(F,G)=+\infty$ by definition. 
		\end{definition}

\begin{figure}
    \centering
    \includegraphics[width=1\textwidth]{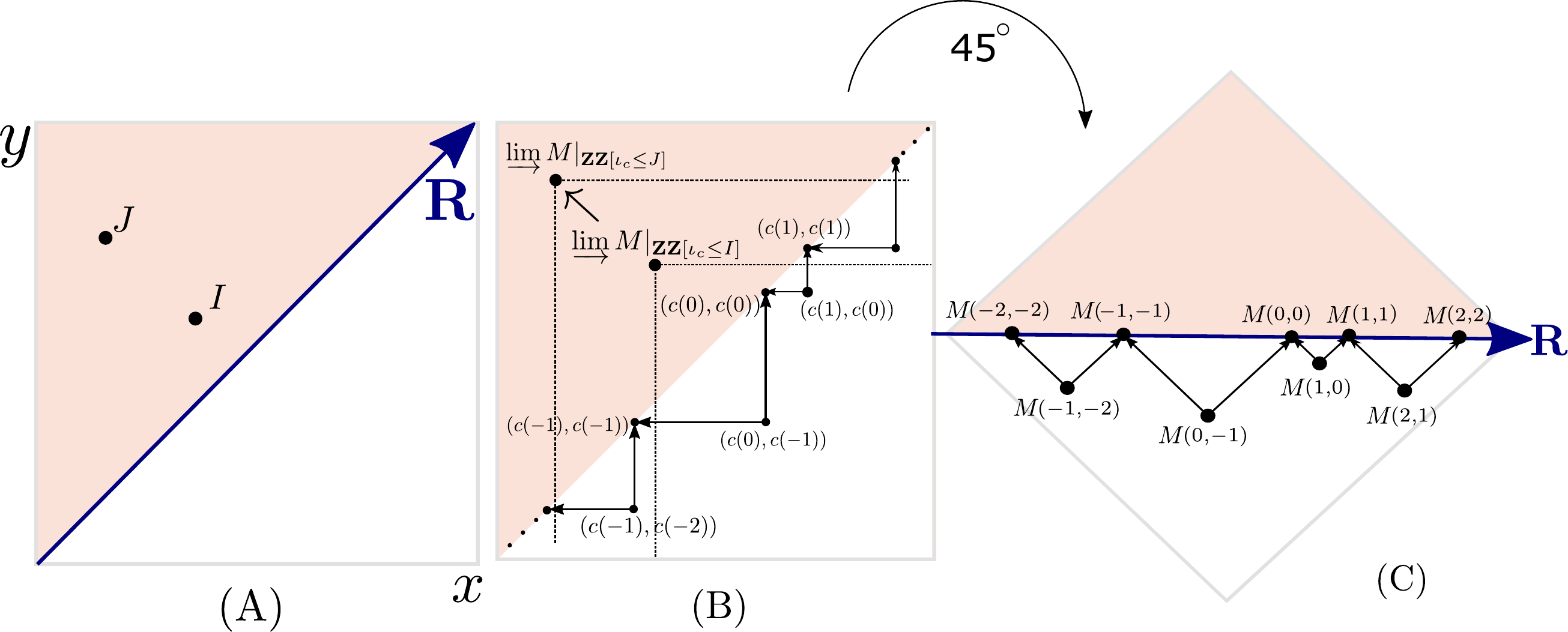}
    \caption{\textbf{Illustration for Definition \ref{def:constructible cosheaf}.} (A) The shaded region stands for the poset $\U$.  We have $I\leq J$ for the intervals $I,J\in \U$ shown in the figure. The diagonal line is identified with the real line via the bijection $(t,t)\leftrightarrow t$. (B) The canonical morphism $F(I)\rightarrow F(J)$ for $I\leq J$ in $\Int$. (C) The  zigzag diagram anchored over the real line completely determines the cosheaf $F:\Int\rightarrow \C$ defined by $I\mapsto \protect\varinjlim M|_{\ZZ[\iota_c\leq I]}$.}
    \label{fig:kan poset}
\end{figure}

\paragraph{Constructible cosheaves over $\R$ \cite{curry2020classification,curry2014sheaves,de2016categorified}.} 

We describe constructible cosheaves over $\R$ by adapting notation from \cite{botnan2018algebraic}. In a nutshell, a constructible cosheaf over $\R$ is a cosheaf over $\R$ \cite{bredon2012sheaf} which is fully determined by a certain ``zigzag diagram" over $\R$.

Given a strictly increasing function $c:\Z\rightarrow \R$ such that $\lim_{i\rightarrow\pm \infty}c(i)=\pm \infty$ and $I:=[b,d]\in \Int$, we define $$\ZZ[\iota_c\leq I]:=\{(i,j)\in \ZZ:(c(i),c(j))\leq (b,d) \mbox{ in } \Rop\times \R\}.$$ 
\ok{The following definition is depicted in Figure \ref{fig:kan poset}.}

\begin{definition}[Constructible cosheaves and critical points]\label{def:constructible cosheaf}A functor $F:\Int\rightarrow \C$ is called a \emph{constructible cosheaf} over $\R$ valued in $\C$, if there exist a strictly increasing function $c:\Z\rightarrow \R$ such that $\lim_{i\rightarrow\pm \infty}c(i)=\pm \infty$ and a  functor $M:\ZZ\rightarrow \C$ such that for all $I\in \Int$, $F(I)=\varinjlim M|_{\ZZ[\iota_c\leq I]}$ (cf. Figure \ref{fig:kan poset} (B)). For all $I\leq J$ in $\Int$, the morphism $F(I)\rightarrow F(J)$ is specified by the initial property of the colimit $F(I)$.  We call $\im(c)$ a set of \emph{critical points} of $F$.\footnote{The function $c$ associated to $F$ is not unique and thus we refer to $\im(c)$ as  `a' set of critical points and not as `the' set of critical points.}

\end{definition} 
Constructible cosheaves over $\R$ are fully characterized by ``zigzag diagrams over $\R$"  (cf. Figure \ref{fig:kan poset} (C)). 

 \begin{definition}[Names of cosheaves]\label{def:reeb graph as a cosheaf} \ok{We call a constructible cosheaf $F:\Int\rightarrow \C$ as given in Table \ref{table:names of cosheaves}, depending on the target category $\C$}. 
\end{definition}

    \begin{table}[t]
    \begin{center}
    \renewcommand{\arraystretch}{1.3}
\begin{tabular}{|l|l|l|l|l|}
\hline
                            & \cellcolor[HTML]{EFEFEF}$\C$ & \cellcolor[HTML]{EFEFEF}Name of constructible cosheaves $\Int\rightarrow \C$& \multicolumn{2}{l|}{\cellcolor[HTML]{EFEFEF}Metric} \\ \hline
(A) & $\graph(X)$                  & Dynamic graph*                             &  $\dintg$  & Def. \ref{def:DG interleaving distance}      \\ \hline
(B) & $\subpart(X)$                & Formigram*                                  &     $\dintf$ & Def. \ref{def:interleaving distance2}       \\ \hline
(C) & $\Part$                      & Unlabeled formigram                         &    $\dintf$  & Def. \ref{def:interleaving distance2} and Rem. \ref{rem:df generalizes dgh} \ref{item:df generalizes dgh1}    \\ \hline
(D) & $\wsets$                     & Weighted Reeb graph                          &     $\dintwreeb$ & Def. \ref{def:distance between weighted Reeb graphs}    \\ \hline
(E) & $\sets$                      & Reeb graph    \cite{botnan2018algebraic,de2016categorified}                    &    $\dintreeb$ & Def. \ref{def:interleaving} and Rem. \ref{rem:interleaving between Reeb}    \\ \hline
(F) & $\vect$                      & Interlevelset persistence module**                    &    $\dint^{\vect}$ or $\bott$ &  Def. \ref{def:interleaving} and \ref{def:bottleneck}    \\ \hline
\end{tabular}

\end{center}
\caption{\textbf{Objects in panels (A),(B),(C),(D),(E),(F) of Figure \ref{fig:entire picture} and their corresponding metrics.} \ok{Metrics in this table are totally ordered, i.e. in Figure \ref{fig:entire picture}, each process in (A)$\rightarrow$(B)$\rightarrow$(C)$\rightarrow$(D)$\rightarrow$(E)$\rightarrow$(F) is stable.} * Extra assumptions are necessary that are given in Definitions \ref{def:dyn graphs} and \ref{def:formigram}. **Visualized by zigzag barcode (Definition \ref{def:zigzag barcode}).}\label{table:names of cosheaves}
\end{table}

\begin{framed}
\begin{convention}\label{con:conversion-cosheaf}
In the rest of the paper, every functor $\Int\rightarrow \C$ will be assumed to be a constructible cosheaf and thus will be often simply called a cosheaf.
\end{convention}
\end{framed}

\paragraph{About cosheaves $\Int\rightarrow \C$ when $\C$ is a join-semilattice, $\vect$, or $\sets$.} Next we provide several remarks and review known results about constructible cosheaves $\Int\rightarrow \C$ when $\C$ is either a join-semilattice, $\vect$, or $\sets$.  

\begin{definition}[\Cosheaf{} map]\label{def:cosheaf conditions}
Let $\C$ be a join-semilattice  (Item \ref{item:lattice} in Section \ref{sec:category} and Remark \ref{rem:posets are categories} \ref{item:posets are categories1}). A map $f:\R\rightarrow \ob(\C)$ is said to \emph{be \cosheaf} if the following hold: There exists a locally finite set $C\subset \R$ of `critical points' such that (1) $f$ is constant between any two consecutive points in $C$, and (2) for every $c\in C$, $f(c)$ is \emph{locally maximal}, i.e. there exists $r_c>0$ with $f(c-\eps)\leq f(c) \geq f(c+\eps)$ for all $\eps\in [0,r_c)$.
\end{definition}

\begin{remark}\label{rem:join-semilattice cosheaf} Let $\C$ be a join-semilattice. Then
\begin{enumerate}[label=(\roman*),itemsep=-1ex]
    \item A constructible cosheaf $F:\Int\rightarrow \C$ induces the map $f:\R\rightarrow \ob(\C)$ defined by $t\mapsto F([t,t])$. Then, $f$ is \cosheaf{}. Note also that $F$ is recovered from $f$ via the formula $F(I)=\bigvee\{f(t):t\in I\}$ for $I\in \Int$.\footnote{$F([t,t])$ is said to be the \emph{costalk} of $F$ at $t$ in the literature (e.g. \cite{bredon2012sheaf,curry2014sheaves}).}\label{item:join-semilattice cosheaf1}
    \item Conversely, assume that a map $g:\R\rightarrow \ob(\C)$  is \cosheaf{}.  Then, $G:\Int\rightarrow \C$ defined by $I\rightarrow \bigvee\{G(t):t\in I\}$ is a constructible cosheaf.
  \label{item:join-semilattice cosheaf2}
\end{enumerate}

For $t\in \R$ and $\eps\geq 0$, let $[t]^\eps:=[t-\eps,t+\eps]$.

\begin{enumerate}[resume,label=(\roman*),itemsep=-1ex]
    \item \label{item:join-semilattice cosheaf3} For any two constructible cosheaves  $F,G:\Int\rightarrow \C$, the interleaving distance between them is:
\begin{align*}
    \dint(F,G)&=\inf\left\{\eps\geq 0:\mbox{for all $I\in \U$, } F(I)\leq G(I^\eps) \ \mbox{and } G(I)\leq  F(I^\eps) \right\}.
    \\&=\inf\left\{\eps\geq 0:\mbox{for all $t\in \R$, } F([t,t])\leq G([t]^\eps) \ \mbox{and } G([t,t])\leq  F([t,t]^\eps) \right\}.
\end{align*}. 
\end{enumerate}
\end{remark}

The following proposition follows from the fact that any $\ZZ$-indexed module is interval decomposable \cite{botnan2015interval,gabriel1972unzerlegbare}.

\begin{proposition}[\cite{botnan2018algebraic}]\label{prop:interval decomposability of constructible cosheaf}
    Any constructible cosheaf $F:\Int\rightarrow \vect$ is interval decomposable.
\end{proposition}

Let $\R_{\ell:y=x}$ be the diagonal line $y=x$ in $\R^2$. By Proposition \ref{prop:interval decomposability of constructible cosheaf}, we have:

\begin{definition}\label{def:zigzag barcode} Given any constructible cosheaf $F:\Int\rightarrow \vect$, the \emph{zigzag barcode} of $F$ is defined as 
\[\barc(F):=\B(F)\cap \R_{\ell:y=x}.\]
$\barc(F)$ will be regarded as a multiset of real intervals by identifying $\R_{\ell:y=x}$ with $\R$ via the bijection $(t,t)\leftrightarrow t$.
\end{definition}

\begin{remark}\label{rem:interleaving between Reeb} In Definition \ref{def:interleaving}, when $\C=\sets$ and $M,N$ are constructible, we obtain the interleaving distance $\dintreeb$ between Reeb graphs $M$ and $N$ \cite{botnan2018algebraic,de2016categorified}.
\end{remark}

Bauer, Ge and Wang \cite{bauer2014measuring} observed that the $0$-th levelset barcode of a Reeb graph encodes all non-trivial persistent homology information of the Reeb graph. A recent theorem on stability of the $0$-th levelset barcode of a Reeb graph is the following  (see also \cite{bauer2014measuring,bauer2015strong,botnan2018algebraic,de2016categorified,di2016edit} and cf. the arrow (E) $\rightarrow$ (F) in Figure \ref{fig:entire picture}).

\begin{theorem}[{\cite{ bjerkevik2016stability,botnan2018algebraic}}]\label{thm:reeb graph barcode stability} For any two Reeb graphs $M,N:\Int\rightarrow \sets$, we have that
\[\bott(\barc(\free\circ M),\barc(\free\circ N))\leq 2\cdot \dintreeb(M,N), \]
where $\free$ is the free functor $\Sets\rightarrow \Vect$ (Definition  \ref{def:free}) and $\bott$ is the bottleneck distance (Definition \ref{def:bottleneck}).
\end{theorem}

\paragraph{Smoothing of cosheaves.} The following definition is a straightforward adaptation of the notion of \emph{Reeb graph smoothing} \cite[Section 4.4]{de2016categorified}. 

\begin{definition}\label{def:cosheaf smoothing}For any $\eps\geq 0$, the \emph{$\eps$-smoothing} of a functor $F:\Int\rightarrow \C$ is the functor $S_\eps F:\Int\rightarrow \C$ defined by $I\mapsto F(I^\eps)$ and $(I\leq J) \mapsto F(I^\eps\leq J^\eps)$.
\end{definition}

In Appendix \ref{sec:smoothing} we study basic properties of this smoothing operation on formigrams.	

\begin{remark}\label{rem:smoothing forms semigroup}
For $\eps_1,\eps_2\geq 0$ and $I\in \Int$, since $I^{\eps_1+\eps_2}=(I^{\eps_1})^{\eps_2}$, we have  $S_{\eps_1+\eps_2} F=S_{\eps_1}S_{\eps_2}F$. This is a straightforward generalization of \cite[Proposition 4.13]{de2016categorified}.
\end{remark}

\begin{proposition}[{\cite[Propositions 4.16 and 4.17]{de2016categorified}}]\label{prop:smooting costructible yields constructible}
    If a given functor $F:\Int\rightarrow \C$ is constructible, then $S_\eps F$ is also constructible. In particular, if $C$ is a set of critical points of $F$, then one set of critical points of $S_\eps F$ is  $(C-\eps)\cup (C+\eps)$ where $C\pm\eps:=\{c\pm\eps\in\R:c\in C\}$.  
	\end{proposition}

	\section{From dynamic graphs to formigrams, Reeb graphs, and zigzag barcodes}\label{sec:DGs}
	\ok{In this section we define objects that appear in Figure \ref{fig:entire picture} (A)--(F).}  In Sections \ref{subsec:DGs} and \ref{sec:formi and barcode} we provide rigorous definitions of dynamic graphs and formigrams  respectively.  Formigrams are used to encode the evolution of connected components of dynamic graphs. In Section \ref{sec:the reeb} we define several invariants of formigrams. Throughout this section $X$ and $Y$ are nonempty finite sets.

	\subsection{Dynamic graphs (DGs)}\label{subsec:DGs}
	
	In this section we define the notion of \emph{dynamic graphs} (cf. Figure \ref{fig:entire picture} (A)) and also a suitable notion of isomorphism.

	\begin{definition}[Dynamic graphs]\label{def:dyn graphs}
	A \emph{dynamic graph (DG)} over $X$ is any \cosheaf{} map $\dynG_X:\R\rightarrow \graph(X)$ (cf. Definition \ref{def:cosheaf conditions}) which, in addition, also satisfies the condition that every $x\in X$ admits a closed interval lifespan $I_x\subset \R$, i.e. $x$ belongs to the vertex set of $\dynG_X(t)$ if and only if $t\in I_x$. When all $x\in X$ have the same lifespan, the dynamic graph $\dgx$ is said to be \emph{saturated} (e.g. the DG illustrated in Figure \ref{fig:entire picture} (A)). 
	\end{definition}
	
Remark \ref{rem:join-semilattice cosheaf} \ref{item:join-semilattice cosheaf2} allows us to view any DG as a constructible cosheaf over $\R$ valued in the lattice $\graph(X)$. This will in turn allow us to quantify the difference between DGs using the interleaving distance  (Remark \ref{rem:join-semilattice cosheaf}  \ref{item:join-semilattice cosheaf3}).

	\begin{example}\label{ex:special DGs} 
		Let us consider the two dynamic point clouds depicted in Figure \ref{fig:weakly isomorphic DMSs} (A) and (B).  For $\delta=1$, their time-varying $\delta$-Rips complexes are the DGs depicted in Figure \ref{fig:weakly isomorphic DMSs} (A') and (B')  (see Proposition \ref{prop:from DMS to DG} for a general statement).\footnote{In \cite{sinhuber2017phase}, this type of DGs was utilized for topological characterization of insect swarms. A sensor network \cite{de2006coordinate,de2007homological} is another example of such DGs arising from viewing each sensor as a point in the dynamic metric space of sensors.} 
		\label{item:special DG; derived from DMSs}
	\end{example}
	
	We now specify a suitable notion of isomorphism in the class of DGs.
	
	\begin{definition}[Isomorphism for DGs]\label{def:isom for graphs} Two DGs $\dynG_X$ and $\dynG_Y$ over $X$ and $Y$ respectively are \emph{isomorphic} if there exists a bijection $\varphi:X\rightarrow Y$ such that for all $t\in \T$, the map $\varphi$ serves as a graph isomorphism between $\dynG_X(t)$ and $\dynG_Y(t)$. Namely, for all $t\in \R$, $\varphi$ restricted to the vertex set of $\dynG_X(t)$ is a graph isomorphism from $\dynG_X(t)$ to $\dynG_Y(t)$.
	\end{definition}
	
	\begin{example}\label{ex:non-isomorphic DGs}
Let $\dgx$ and $\dgy$ be the two DGs in Example \ref{ex:special DGs}. Although the two graphs $\dgx(t)$ and $\dgy(t)$ are isomorphic for each $t\in \R$, it is not difficult to see that $\dgx$ and $\dgy$ \emph{are not isomorphic as DGs}. It is important to point out that $\dintg$ and also all the  invariants of DGs which we consider in this paper (see Figures \ref{fig:entire picture} and \ref{fig:maximal group diagram}) are able to discriminate these two DGs.
	\end{example}
	
\begin{figure}
    \centering
    \includegraphics[width=\textwidth]{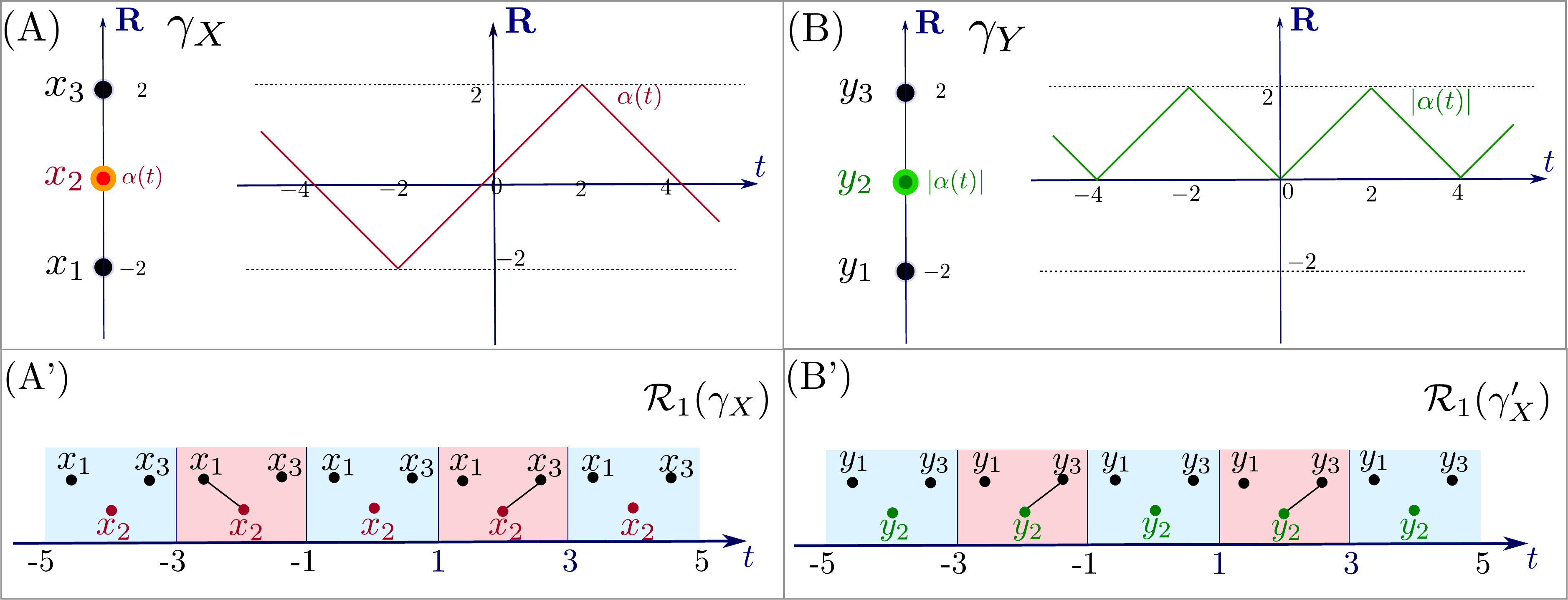}
    \caption{An illustration for Example \ref{ex:special DGs}. (A) and (B): Two dynamic point clouds $\gamma_X$ and $\gamma_Y$  each consisting of three points $x_1,x_2,x_3$ and $y_1,y_2,y_3$, respectively. While the two points $x_1$ and $x_3$ (resp. $y_1$ and $y_3$) are fixed at vertical coordinate values $-2$ and $2$, the other point $x_2$ (resp. $y_2$) moves according to $\alpha(t)$ (resp. $\abs{\alpha(t)}$). (A') and (B') show the $1$-Rips complexes $\mathcal{R}_1(\gamma_X(t))$ and $\mathcal{R}_1(\gamma_Y(t))$ for $t\in [-5,5]$.}
    \label{fig:weakly isomorphic DMSs}
\end{figure}

	\subsection{Formigrams}\label{sec:formi and barcode}
	 
	 We introduce a notion of \emph{formigram} to encode the evolution of connected components in dynamic graphs. 

	\begin{figure}
	\begin{center}
		\includegraphics[width=0.45\linewidth]{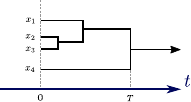}
	\end{center}
		\caption{\label{fig:dendro and tree} An example of a dendrogram over the set  $X=\{x_1,x_2,x_3,x_4\}$.}
	\end{figure}

	\begin{definition}\label{def:formigram} 
		A \emph{formigram}\footnote{The name formigram is a combination of the words {formicarium} and {diagram}.}  over a finite set $X$ is any \cosheaf{} map $\theta_X:\R \rightarrow \subpart(X)$ (cf. Definition  \ref{def:cosheaf conditions}) which, in addition, also satisfies that every $x\in X$ admits a  closed interval lifespan $I_x\subset \R$, i.e. $x$ belongs to $\bigcup\theta_X(t)$ if and only if $t\in I_x$. By the \emph{support} of $\theta_X$, denoted by $\supp(\theta_X)$, we mean the union $\bigcup_{x\in X} I_x$ of all lifespans of points in $X$. 
	\end{definition}

Remark \ref{rem:join-semilattice cosheaf} \ref{item:join-semilattice cosheaf2} allows us to view any formigram as a constructible cosheaf over $\R$ valued in the lattice $\subpart(X)$.  We will interchangeably write both $\theta_X:\R\rightarrow \subpart(X)$ and $\theta_X:\Int\rightarrow \subpart(X)$ as needed. 

Recall the unlabeling functor $\unlabel:\subpart(X)\rightarrow \Part$ from Definition \ref{def:three functors} \ref{item:unlabeling functor}.

\begin{remark}[Essentially unique labeling] \label{rem:trivial labeling}
    \ok{Given any unlabeled formigram $\theta:\Int\rightarrow \Partin$ (cf. Definition \ref{def:reeb graph as a cosheaf}), one can always find a formigram $\theta_X$ such that $\unlabel\circ \theta_X$. For example, in Figure \ref{fig:entire picture} (C), by labeling the eight `orbits' using any set $X$ of eight elements, we obtain such a formigram $\theta_X$. Assuming that the colimit of $\theta$ is isomorphic to a pair $(A,P_A)\in\ob(\Part)$, the size of $A$ equals the size of the labeling set.} 
\end{remark}

\begin{remark}\label{rem:about the definition of formigrams}
\ok{Formigrams  generalize the classical notion of \emph{dendrogram}, a 1-parameter nested family of partitions \cite{clustum,jardine-sibson}. Namely, any formigram $\theta_X:\R\rightarrow \subpart(X)$ is called a dendrogram if the following properties hold:
(1) every $x\in X$ has the lifespan $[0,\infty)$, (2) if $t_1\leq t_2$, then \hbox{$\theta_X(t_1) \leq \theta_X(t_2)$}, (3) there exists $T>0$ such that $\theta_X(t)=\{X\}$ for $t\geq T$. See Figure \ref{fig:dendro and tree} for an illustrative example.} 	
\end{remark}
	
\begin{definition} \label{def:special cases2} 
	Given a formigram $\theta_X:\R\rightarrow \subpart(X)$, 
	 we call $\theta_X$ \emph{saturated} if all $x\in X$ have the same lifespan \ok{(e.g. any dendrogram or the formigram that is depicted in Figure \ref{fig:entire picture} (B)).}
\end{definition}

\begin{definition}\label{def:isomorphism between formigrams}
	Two formigrams $\theta_X$ and $\theta_Y$ over $X$ and $Y$, respectively, are \emph{isomorphic} if there exists a bijection $\varphi:X\rightarrow Y$ such that for all $t\in \T$,  $\theta_X(t)=\varphi_\ast(\theta_Y(t)):=\{\varphi^{-1}(B)\subset X: B\in \theta_Y(t)\}$. 
\end{definition}

\ok{We remark that $\theta_X$ and $\theta_Y$ are isomorphic if and only if their unlabeled counterparts $\unlabel\circ\theta_X, \unlabel\circ\theta_Y:\Int\rightarrow \Partin$ are naturally isomorphic.}

\subsection{Functorial summarization of dynamic graphs and formigrams}\label{sec:the reeb}

We can summarize a dynamic graph at different levels by post-composing the functors from Table \ref{table:functors}. First of all, a formigram serves as a summary of the evolution of connected components in a DG via the path components functor $\pi_0:\graph(X)\rightarrow\subpart(X)$ (cf. Definition \ref{def:pi zero}):
	
	\begin{definition}\label{def:from DG to formi}
	    Given a DG $\dynG_X:\R\rightarrow \graph(X)$, the formigram of $\dynG_X$ is defined as $\pi_0\circ \dynG_X:\R\rightarrow \subpart(X)$ (cf. Figure \ref{fig:entire picture} (A) and (B)).
	\end{definition}

\begin{remark}\label{rem:from DG to formi}
    	We remark that the lifespan of any $x\in X$ in $\dynG_X$ is inherited by the formigram $\pi_0\circ \dynG_X$. In particular, the formigram of a saturated DG is itself saturated (Definitions \ref{def:dyn graphs} and \ref{def:special cases2}). 
\end{remark}

	\begin{figure}
		\includegraphics[width=\textwidth]{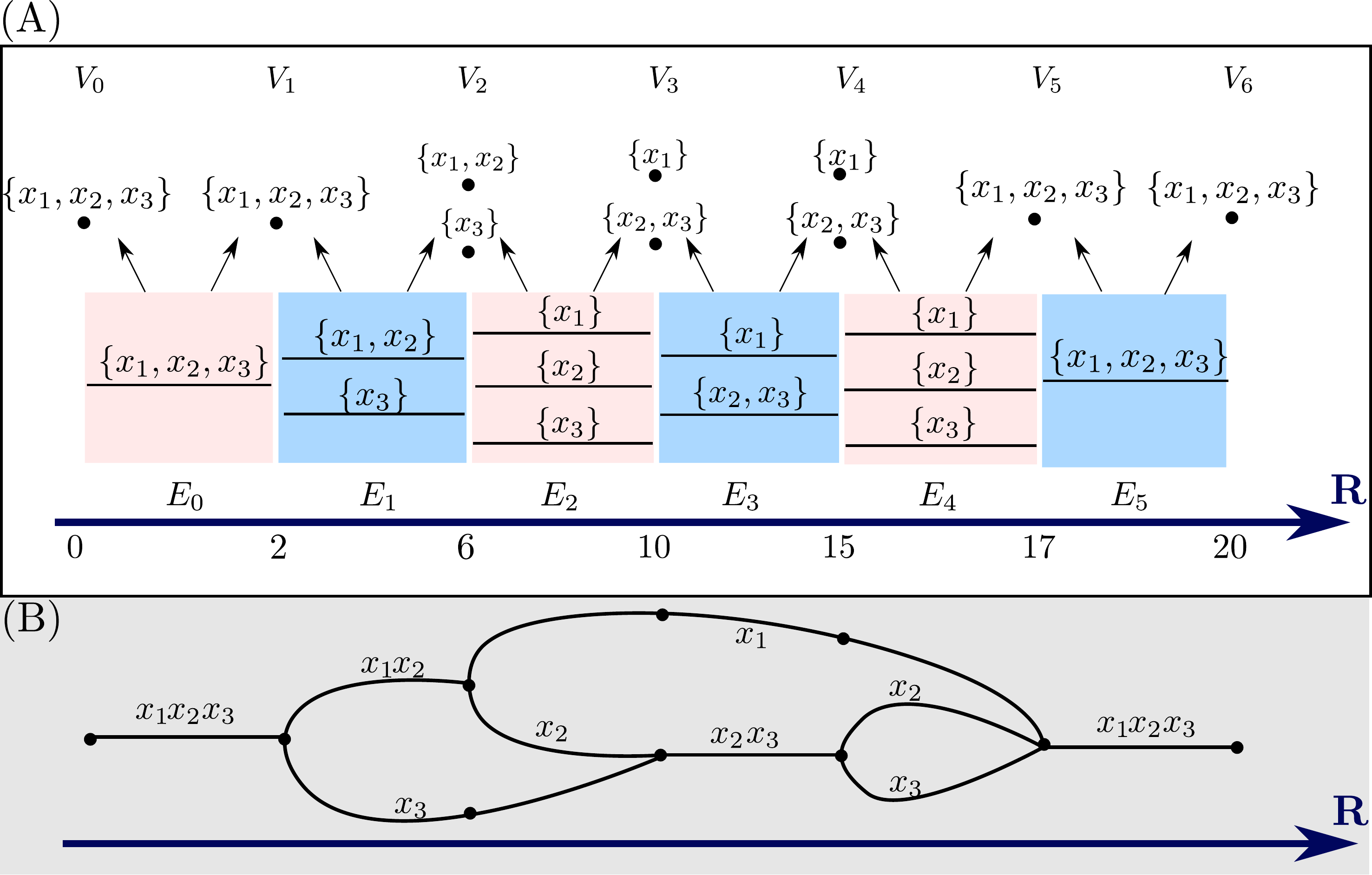}
		\caption{\label{fig:underlying} Visualization of the Reeb graph of a formigram $\theta_X:\R\rightarrow \subpart(X)$.   (A) The formigram $\theta_X$ is defined as follows over the interval $[0,20]$: Let $X:=\{x_1,x_2,x_3\}$. 
		$\theta_X(t)$ is $\{x_1x_2x_3\}$ for $t\in [0,2]\cup [17,20]$, it is $\{x_1x_2|x_3\}$ for $t\in (2,6]$, it is $\{x_1|x_2|x_3\}$ for $t\in (6,10)\cup (15,17)$, and it is $\{x_1|x_2x_3\}$ for $t\in [10,15]$. (B) depicts the Reeb graph of $\theta_X$ (with labels).}
	\end{figure}

	\ok{Figure \ref{fig:entire picture} (B)--(F) illustrates the following definition. Recall the three functors in Definition \ref{def:three functors}.}
	\begin{definition}[Summaries of formigrams]\label{def:underlying}	
	Let $\theta_X:\Int\rightarrow \subpart(X)$ be a formigram 
	\begin{enumerate}[label=(\roman*),itemsep=-1ex]
	    \item The \emph{unlabeled formigram} of $\theta_X$ is the cosheaf obtained by post-composing the unlabeling functor $\unlabel:\subpart(X)\rightarrow \Part$ to $\theta_X$.\label{item:unlabeled formigram}
	    \item  The \emph{(underlying) weighted Reeb graph} of $\theta_X$, denoted by $\omega(\theta_X)$, is the cosheaf obtained by post-composing the agglomeration functor $\A:\Part\rightarrow \wsets$ to the unlabeled formigram of $\theta_X$.\label{item:underlying weighted reeb} 
	  \item  The \emph{(underlying) Reeb graph} of $\theta_X$, denoted by $\reeb(\theta_X)$, is the cosheaf  obtained by post-composing the unweighting functor $\unweight:\wsets\rightarrow \sets$ to the weighted Reeb graph of $\theta_X$.\label{item:underlying reeb} 
	\end{enumerate}
			The above three items can sometimes be difficult to visualize for example due to possible non-planarity (cf. Example \ref{example:non-planar} below). This motivates us to further summarize formigrams into more easily visualizable invariants. One such invariant is the (zigzag) barcode \cite{zigzag}: 
	\begin{enumerate}[label=(\roman*),resume]
	  \item  The \emph{(zigzag) barcode} of $\theta_X$ is the zigzag barcode of the cosheaf $\Int\rightarrow \vect$ obtained by post-composing the free functor $\free:\sets\rightarrow \vect$ to the underlying Reeb graph of $\theta_X$. In other words, $\barc(\theta_X)$ is the $0$-th levelset barcode of the underlying Reeb graph of $\theta_X$. \label{item:zigzag barcode of formigram}
	\end{enumerate}
		\end{definition}

 \ok{We now describe how to visualize a formigram and its underlying weighted/unweighted Reeb graphs. The results of the visualization are topological graphs over the real line with or without labels;} see Figure \ref{fig:underlying} for an example. Let us fix a formigram $\theta_X:\R\rightarrow \subpart(X)$ with a set $C\subset \R$ of critical points (cf. Remark \ref{rem:join-semilattice cosheaf} \ref{item:join-semilattice cosheaf2}).

	\begin{enumerate}[label=Step \arabic*.]
				\item for each $c\in C$, we specify the vertex set $V_c:=\theta_X(c)$, which lie over $c\in \R$. 
				\item  for each pair of consecutive critical points $c_1<c_2$ in $C$, we specify the edge set $E_{c_1,c_2}:=\theta_X(t)$ for any $t\in (c_1,c_2)$, which lie over the interval $(c_1,c_2)$,
				\item for each pair of consecutive critical points $c_1<c_2$ in $C$, we define left and right attaching maps $l_{c_1,c_2}:E_{c_1,c_2}\rightarrow V_{c_1}$ and $r_{c_1,c_2}:E_{c_1,c_2}\rightarrow V_{c_2}$ by sending each block $B\in E_{c_1,c_2}$ to the blocks in $V_{c_1}$ and $V_{c_2}$ which contain $B$, respectively.
			\end{enumerate} 
As a result, we obtain a topological graph along the real line such as the one depicted in Figure \ref{fig:underlying} (B). By replacing the labeling of the elements of vertex sets and edge sets by their cardinalities, we obtain the weighted Reeb graph of $\theta_X$. Alternatively, weights can be represented by the thickness of nodes and edges as in Figure \ref{fig:entire picture} (D). By forgetting those weights, we obtain the Reeb graph of $\theta_X$.

	The following example shows that two different non-isomorphic formigrams  (Definition \ref{def:isomorphism between formigrams}) can have the same underlying weighted/unweighted Reeb graph.

	\begin{example}\label{ex:same underlying graph}  
	\ok{For the sets $X:=\{x_1,x_2,x_3\}$ and $Y:=\{y_1,y_2,y_3\}$, we consider the following two formigrams $\theta_X$ and $\theta_Y$ over $X$ and $Y$ respectively:
	\begin{align*}	
		\theta_X(t)&:=\begin{cases}\{x_1x_2|x_3\},&t\in(-3,-1)\cup(1,3)\\\{x_1x_2x_3\},\!\!&\mbox{otherwise},	\end{cases}\hspace{5mm}		
		\theta_Y(t):=\begin{cases}\{y_1y_2|y_3\},&t\in(-3,-1)\\ \{y_1|y_2y_3\},&t\in(1,3) \\\{y_1y_2y_3\},&\mbox{otherwise.}\end{cases}
		\end{align*}
		It is clear that $\theta_X$ and $\theta_Y$ are not isomorphic (cf. Definition \ref{def:isomorphism between formigrams}) whereas their weighted Reeb graphs are both isomorphic to the weighted Reeb graph depicted in Figure \ref{fig:non-planar} (A). This also implies that their \emph{unweighted} Reeb graphs are isomorphic.}
	\end{example}
Reeb graphs of formigrams are not always planar, which can make the visualization of a formigram difficult \cite{vehlow2015visualizing}:

\begin{example}\label{example:non-planar} Consider the formigram $\theta_X$ over the set $X=\{x_i\}_{i=1}^3\cup\{y_i\}_{i=1}^3\cup\{z_i\}_{i=1}^3$ given by:	
	\begin{align*}
		\theta_X(t)&:=\begin{cases}\left\{x_1x_2x_3|y_1y_2y_3| z_1z_2z_3\right\},&t\in (-\infty,1]
		\\\left\{x_1|x_2|x_3|y_1|y_2|y_3|z_1|z_2|x_3\right\},&t\in(1,2)
		\\\left\{x_1y_1z_1|x_2y_2z_2|x_3y_3 z_3\right\},&t\in[2,\infty).
		\end{cases}
	\end{align*}	
	See Figure \ref{fig:non-planar} (B) for the Reeb graph of $\theta_X$: this graph has the complete bipartite graph $K_{3,3}$ as a minor, which implies that it is not planar by Kuratowski's theorem \cite{bondy2008graph}.
\end{example}

	\begin{figure}
		\begin{center}
			\includegraphics[width=0.8\linewidth]{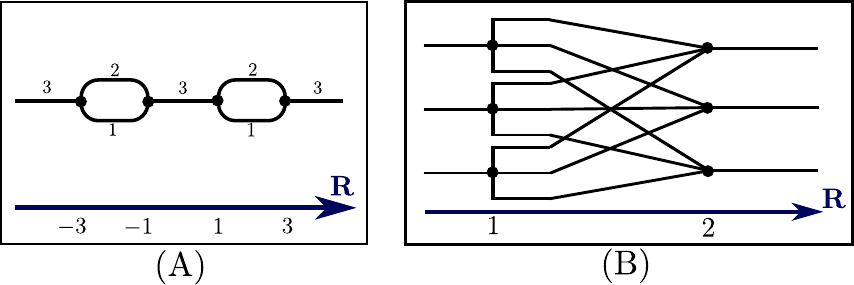}
			\caption{(A) The underlying weighted Reeb graph of the two formigrams from Example \ref{ex:same underlying graph}. (B) The underlying Reeb graph of the formigram from Example \ref{example:non-planar}. \label{fig:non-planar}}
		\end{center}
	\end{figure}
	
	\section{Interleavings between dynamic graphs and between formigrams }\label{sec:metrics-new}\label{sec:metrics}\label{sec:DG-metric}
	
	Given any nonempty finite set $X$, recall that $\graph(X)$ and $\subpart(X)$ are lattices. Therefore, we readily have the interleaving distance between two DGs over the \emph{same underlying set} $X$ or between two formigrams over the \emph{same underlying set} $X$ (cf. Remark \ref{rem:join-semilattice cosheaf} \ref{item:join-semilattice cosheaf3}). However, we often wish to quantify the difference between two given DGs (or between two formigrams) over possibly \emph{different} underlying sets. For achieving this, we blend ideas related to the Gromov-Hausdorff distance (Definition \ref{def:the GH}) with the interleaving distance. This type of idea has already appeared in the literature e.g. \cite{multi-clust,kim2020spatiotemporal,memoli2017distance,rolle2020stable}.

\paragraph{Tripods and their compositions.} We recall the notion of \emph{tripod} from \cite{memoli2017distance} as a preliminary to blending the Gromov-Hausdorff distance with the interleaving distance.

	\begin{definition}[Tripods]\label{def:tripod} Let $X$ and $Y$ be any two sets. A \emph{tripod} $R$ between $X$ and $Y$ is a pair of surjections from another set $Z$ to $X$ and $Y$, respectively. Namely, $R$ can be expressed as a diagram
		$R:\ X \xtwoheadleftarrow{\varphi_X} Z \xtwoheadrightarrow{\varphi_Y} Y. $
	\end{definition}
	
	For any sets $X,Y$ and $W$, consider any two tripods $\tripodone$ and $\tripodtwo$. Consider the set $Z:=\left\{(z_1,z_2)\in Z_1\times Z_2:\varphi_Y(z_1)=\psi_Y(z_2)\right\}$ and let $\pi_1:Z\rightarrow Z_1$ and $\pi_2:Z\rightarrow Z_2$ be the canonical projections to the first and the second coordinate, respectively. We define the composite tripod $R_2\circ R_1$ as follows:
	\begin{equation}\label{eq:comosition}
	R_2\circ R_1:X \xtwoheadleftarrow{\omega_X}\ Z \xtwoheadrightarrow{\omega_W}\ W,\ \mbox{where}\ \  \omega_X:=\varphi_X\circ\pi_1, \ \ \omega_W:=\psi_W\circ \pi_2.
	\end{equation}	
	\begin{center}	
		\begin{tikzcd}		
			&&Z\arrow[swap,two heads]{ld}{\pi_1}\arrow[two heads]{rd}{\pi_2}\\
			&Z_1\arrow[swap,two heads]{ld}{\varphi_X}\arrow[two heads]{rd}{\varphi_Y}&&Z_2\arrow[swap,two heads]{ld}{\psi_Y}\arrow[two heads]{rd}{\psi_W}
			\\X&&Y&&W
		\end{tikzcd}
	\end{center}

\begin{notation}\label{not:belongs to a tripod} Given a tripod $\tripod$, for $x\in X$ and $y\in Y$, we write $(x,y)\in R$ whenever there exists $z\in Z$ such that $\varphi_X(z)=x$ and $\varphi_Y(z)=y$.	\end{notation}

\subsection{Interleaving distance between dynamic graphs}

In this section we introduce the interleaving distance between DGs. Let $G_X=(X,E_X)$ be any graph and let $Z$ be any set. For any map $\varphi:Z\rightarrow X$, the pullback $G_Z:=\varphi^\ast G_X$ of $G_X$ via $\varphi$ is the graph on the vertex set $Z$ with the edge set $E_Z=\left\{\{z,z'\}: \left\{\varphi(z),\varphi(z')\right\}\in E_X\right\}.$ Let $\dyngx$ be a DG over $X$. The \emph{pullback} $\dynG_Z:=\varphi^*\dynG_X$ of $\dynG_X$ via $\varphi$ is a DG over $Z$ defined as follows: for all $t\in \T$, $\dynG_Z(t)$ is the graph on the vertex set $V_Z(t)=\varphi^{-1}\left(V_X(t)\right)$ with the edge set $E_Z(t)=\left \{\{z,z'\}: \left\{\varphi(z),\varphi(z')\right\}\in E_X(t) \right\}.$
	
Let $\dynG_X$ and $\dynG_Y$ be two DGs. Given a tripod $\tripod$, we write $\dynG_X\leq_R\dynG_Y$ if for all $t\in \R$ $\varphi_X^\ast\dynG_X(t)\leq \varphi_Y^\ast\dynG_Y(t)$ in the poset $\graph(Z)$. 
	
	\begin{remark}\label{rem:dg corrres} Let $\dynG_X$, $\dynG_Y$, and $\dynG_W$ be any three DGs. Let $R_1$ be any tripod between $X$ and $Y$ and let $R_2$ be any tripod between $Y$ and $W$. If $\dynG_X\leq_{R_1}\dynG_Y$ and $\dynG_Y\leq_{R_2}\dynG_W$, then it is easy to check that $\dynG_X\leq_{R_2\circ R_1}\dynG_W$ as well.
	\end{remark}
	
Recall that a DG $\dgx$ can be viewed as a constructible cosheaf $\Int\rightarrow \graph(X)$ (Definition \ref{def:dyn graphs} and Remark \ref{rem:join-semilattice cosheaf} \ref{item:join-semilattice cosheaf2}). This cosheaf sends each $I\in \Int$ to $\bigvee_I\dgx:=\bigvee\{\dgx(t):t\in I\}=\bigcup \{\dgx(t):t\in I\}$. Therefore, we can utilize the interleaving distance between cosheaves (Definition \ref{def:interleaving}) and tripods for quantifying the difference between two DGs over possibly different underlying sets.
	
	\begin{definition}\label{def:DG interleaving distance} 	Let $\dynG_X:\Int\rightarrow \graph(X)$ and $\dynG_Y:\Int\rightarrow \graph(Y)$ be two DGs. A tripod $\tripod$ is called an \emph{$\eps$-tripod between $\dynG_X$ and $\dynG_Y$} if $\dint^{\graph(Z)}(\varphi_X^\ast\dynG_X,\varphi_Y^\ast\dynG_Y)\leq \eps$. The interleaving distance between DGs $\dynG_X$ and $\dynG_Y$ is defined as
	\begin{align*}
	   \dintg(\dynG_X,\dynG_Y)=\min\{\eps\geq 0:\mbox{there exists an $\eps$-tripod between $\dynG_X$ and $\dynG_Y$}\}.
	\end{align*}
	If there is no $\eps$-tripod between $\dynG_X$ and $\dynG_Y$ for any $\eps\geq 0$, then we declare $\dintg(\dynG_X,\dynG_Y)=+\infty.$ 
	\end{definition}
	
	\begin{theorem}\label{thm:DG metric}$\dintg$ in Definition \ref{def:DG interleaving distance} is an extended pseudo metric on DGs.
	\end{theorem}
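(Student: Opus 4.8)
The plan is to verify the three defining properties of an extended pseudometric—non-negativity, symmetry, and the triangle inequality—on the class of DGs, noting that the qualifier \emph{pseudo} frees us from having to show that distance zero forces isomorphism. Non-negativity and the $+\infty$ convention are built into Definition \ref{def:interleaving distance}, since $\dintg$ is an infimum over $\eps\ge 0$ (with value $+\infty$ when no $\eps$-tripod exists). For $\dintg(\dynG_X,\dynG_X)=0$ I would exhibit the identity tripod $R:\ X \xtwoheadleftarrow{\mathrm{id}_X} X \xtwoheadrightarrow{\mathrm{id}_X} X$ and observe that $S_0\dynG_X=\dynG_X$ (because $[t]^0=\{t\}$, so $\bigcup_{[t]^0}\dynG_X=\dynG_X(t)$); hence $R$ is a $0$-tripod and the distance vanishes. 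Symmetry is immediate: the two conditions $\dynG_X\xrightarrow{R}S_\eps\dynG_Y$ and $\dynG_Y\xrightarrow{R}S_\eps\dynG_X$ defining an $\eps$-tripod are simply interchanged when the roles of $\dynG_X$ and $\dynG_Y$ are swapped, so the same $R$ witnesses $\eps$-interleaving in either direction and $\dintg(\dynG_X,\dynG_Y)=\dintg(\dynG_Y,\dynG_X)$.

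The substance of the argument is the triangle inequality $\dintg(\dynG_X,\dynG_W)\le \dintg(\dynG_X,\dynG_Y)+\dintg(\dynG_Y,\dynG_W)$. I would fix an $\eps_1$-tripod $R_1$ between $X,Y$ and an $\eps_2$-tripod $R_2$ between $Y,W$ and show that the composite tripod $R_2\circ R_1$ from (\ref{eq:comosition}) is an $(\eps_1+\eps_2)$-tripod between $\dynG_X$ and $\dynG_W$. For the first interleaving inclusion, start from $\dynG_Y\xrightarrow{R_2}S_{\eps_2}\dynG_W$, apply $S_{\eps_1}$ via Proposition \ref{prop:dg observe}\ref{item:dg observe2} to get $S_{\eps_1}\dynG_Y\xrightarrow{R_2}S_{\eps_1}S_{\eps_2}\dynG_W=S_{\eps_1+\eps_2}\dynG_W$ (using Proposition \ref{prop:dg observe}\ref{item:dg observe1}), and then chain it with $\dynG_X\xrightarrow{R_1}S_{\eps_1}\dynG_Y$ through Remark \ref{rem:dg corrres} to obtain $\dynG_X\xrightarrow{R_2\circ R_1}S_{\eps_1+\eps_2}\dynG_W$. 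The reverse inclusion is symmetric: smoothing $\dynG_Y\xrightarrow{R_1}S_{\eps_1}\dynG_X$ by $\eps_2$ yields $S_{\eps_2}\dynG_Y\xrightarrow{R_1}S_{\eps_1+\eps_2}\dynG_X$, which I chain after $\dynG_W\xrightarrow{R_2}S_{\eps_2}\dynG_Y$, again by Remark \ref{rem:dg corrres}, to get $\dynG_W\xrightarrow{R_2\circ R_1}S_{\eps_1+\eps_2}\dynG_X$ for the same composite tripod.

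The point demanding care—and the main obstacle—is confirming that one and the same tripod, namely $R_2\circ R_1$, witnesses both interleaving inclusions. Here I would note that the relation $\dynG_A\xrightarrow{R}\dynG_B$ compares the pullbacks of the two DGs along the two legs of $R$, so it reads unchanged when the legs of $R_2\circ R_1$ are interchanged; the reverse-direction composite produced above is precisely $R_2\circ R_1$ with its legs swapped, under the canonical identification of the fiber-product vertex sets. Once both inclusions hold for $R_2\circ R_1$, it is an $(\eps_1+\eps_2)$-tripod, whence $\dintg(\dynG_X,\dynG_W)\le \eps_1+\eps_2$. Finally I would pass to infima: using the monotonicity $S_\eps\dynG_X\subseteq S_{\eps'}\dynG_X$ for $\eps\le \eps'$ (so that the set of admissible $\eps$ is an up-set), a standard $\delta$-argument over near-optimal choices $\eps_1,\eps_2$ gives $\dintg(\dynG_X,\dynG_W)\le \dintg(\dynG_X,\dynG_Y)+\dintg(\dynG_Y,\dynG_W)$, the inequality being trivial when either summand is $+\infty$.
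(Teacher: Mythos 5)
Your proof is correct and follows essentially the same route as the paper: reflexivity and symmetry are immediate, and the triangle inequality is obtained by showing that the composite tripod $R_2\circ R_1$ from (\ref{eq:comosition}) is an $(\eps_1+\eps_2)$-tripod, via Proposition \ref{prop:dg observe} and Remark \ref{rem:dg corrres}. The paper dispatches the easy parts with ``reflexivity and symmetry are clear'' and handles the reverse interleaving inclusion by symmetry, whereas you additionally spell out why the single composite tripod witnesses both directions under the canonical identification of the fiber products --- a worthwhile clarification, but not a different argument.
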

	We need the following lemma for proving Theorem \ref{thm:DG metric}.
	\begin{lemma}\label{lem:graph pullback and vee commute}Let $\varphi:Z\twoheadrightarrow X$ be a surjective map. Then, for any DG $\dgx$ and any $I\in \Int$, $\varphi^\ast\left(\bigvee_I \dgx\right)=\bigvee_I\varphi^\ast\dgx$.
	\end{lemma}
	
	\begin{proof}Fix $z,z'\in Z$ (it is possible that $z=z'$). Note that $\bigvee_I \dgx$ is the union $\cup_{t\in I} \dgx(t)$. We have that $\{z,z'\}\in \varphi^\ast\left(\bigvee_I \dgx\right)$ iff $\{\varphi(z),\varphi(z')\}\in \bigvee_I\dgx$ iff $\exists t\in I$, $\{\varphi(z),\varphi(z')\}\in \dgx(t)$ iff $\exists t\in I$, $\{z,z'\}\in \varphi^\ast\dgx(t)$ iff $\{z,z'\}\in \bigvee_I \varphi^\ast\dgx$.	\end{proof}
	
	\begin{proof}[Proof of Theorem \ref{thm:DG metric}]Reflexivity and symmetry of $\dintg$ are clear and thus we only show the triangle inequality: Let $X,Y$ and $W$ be some finite sets and let $\dgx,\dgy$  and $\dgw$ be DGs over the three sets respectively. We wish to prove that \hbox{$\dintg(\dynG_X,\dynG_W)\leq \dintg(\dynG_X,\dynG_Y)+\dintg(\dynG_Y,\dynG_W) $.}   
	Let $0<\eps_1, \eps_2<\infty$, and suppose that there are an $\eps_1$-tripod $\tripodone$ between $\dynG_X$ and $\dynG_Y$ and an $\eps_2$-tripod $\tripodtwo$ between $\dynG_Y$ and $\dynG_W$. 	It suffices to prove that $R_2\circ R_1$ is an $(\eps_1+\eps_2)$-tripod between $\dynG_X$ and $\dynG_W$. Let $I\in \Int$. Since $R_1$ is an $\eps_1$-tripod between $\dgx$ and $\dgy$, we have $\bigvee_I \varphi_X^\ast\dgx \leq \bigvee_{I^{\eps_1}}\varphi_Y^\ast \dgy$. Since $R_2$ is an $\eps_2$-tripod between $\dgy$ and $\dgw$, we have $\bigvee_{I^{\eps_1}} \psi_Y^\ast\dgy \leq \bigvee_{I^{\eps_1+\eps_2}}\psi_W^\ast \dgw$. Therefore, by Lemma \ref{lem:graph pullback and vee commute},
	\[\bigvee_I\pi_1^\ast\varphi_X^\ast \dgx \leq \bigvee_{I^{\eps_1}}\pi_1^\ast\varphi_Y^\ast\dgy=\bigvee_{I^{\eps_1}}\pi_2^\ast\psi_Y^\ast\dgy\leq \bigvee_{I^{\eps_1+\eps_2}}\pi_2^\ast \psi_W^\ast \dgw.\]
	By symmetry we also have $\bigvee_{I}\pi_2^\ast \psi_W^\ast \dgw\leq \bigvee_{I^{\eps_1+\eps_2}}\pi_1^\ast\varphi_X^\ast \dgx $. Since $I\in \Int$ is arbitrary, we have shown that $R_2\circ R_1$ is an $(\eps_1+\eps_2)$-tripod between $\dgx$ and $\dgw$, as desired.
	\end{proof}
	
	Given a DG $\dgx:\R\rightarrow \graph(X)$, let $\crit(\dgx)$ denote the set of points of discontinuity, i.e. critical points (cf. Definition \ref{def:cosheaf conditions}). 
	
	\begin{theorem}[Complexity of $\dintg$]\label{thm:DG metric-complexity} 	Fix $\rho\in(1,6)$. Then, it is not possible to compute a $\rho$-approximation to $\dintg(\dynG_X,\dynG_Y)$ between DGs in time polynomial in $|X|,|Y|,|\crit(\dynG_X)|,$ and $|\crit(\dynG_Y)|$, unless $P=NP$.
	\end{theorem}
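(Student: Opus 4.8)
The plan is to give a polynomial-time gap reduction from (a threshold version of) the Gromov--Hausdorff distance, exploiting the fact that a tripod between $X$ and $Y$ is exactly a correspondence between $X$ and $Y$. To a finite metric space $(X,d_X)$ I associate a \emph{saturated} DG $\dynG_X$ whose vertex set is always $X$ and whose edges encode scale by time: for $t\in\T$, declare $\{x,x'\}\in E_X(t)$ iff $d_X(x,x')\le t$, and add all self-loops. Because $d_X$ takes finitely many values, $\dynG_X$ is monotone nondecreasing in $t$ with $|\crit(\dynG_X)|=O(|X|^2)$, and one checks directly that it satisfies every clause of Definition~\ref{def:dyn graphs}; the whole encoding is clearly computable in time polynomial in $|X|$, with output size polynomial in $|X|$ and $|\crit(\dynG_X)|$.

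The conceptual core is an exact identity $\dintg(\dynG_X,\dynG_Y)=2\,\dgh(X,Y)$. First, since $\dynG_Y$ is monotone, its smoothing (Definition~\ref{def:smoothing}) collapses to a time shift: $S_\eps\dynG_Y(t)=\dynG_Y(t+\eps)$, so $\{y,y'\}$ is an edge of $S_\eps\dynG_Y(t)$ iff $d_Y(y,y')\le t+\eps$. Next, given a tripod $R:\,X \xtwoheadleftarrow{\varphi_X} Z \xtwoheadrightarrow{\varphi_Y} Y$, let $C=\{(\varphi_X(z),\varphi_Y(z)):z\in Z\}$ be the induced correspondence. Unwinding Definition~\ref{def:morphism for DG}, the relation $\dynG_X\xrightarrow{R}S_\eps\dynG_Y$ holds iff $d_Y(y,y')\le d_X(x,x')+\eps$ for all $(x,y),(x',y')\in C$ (use the pulled-back edge condition at $t=d_X(x,x')$), and symmetrically $\dynG_Y\xrightarrow{R}S_\eps\dynG_X$ holds iff $d_X(x,x')\le d_Y(y,y')+\eps$ throughout. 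Hence $R$ is an $\eps$-tripod (Definition~\ref{def:interleaving distance}) iff $\dis(C)\le\eps$. Since every correspondence arises from a tripod and finiteness turns the infima into minima, $\dintg(\dynG_X,\dynG_Y)=\min_C\dis(C)=2\,\dgh(X,Y)$; the self-loops and saturation guarantee the vertex sets match in every pullback, so no boundary issues arise.

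With this identity the hardness transfers verbatim. Invoking the NP-hardness of the gap problem that distinguishes $\dgh(X,Y)\le \tfrac12$ from $\dgh(X,Y)\ge 3$ (a ratio-$6$ separation), the reduction outputs DGs with $\dintg\le 1$ on ``yes'' instances and $\dintg\ge 6$ on ``no'' instances. A $\rho$-approximation with $\rho\in(1,6)$ returns $\hat d$ with $\dintg\le \hat d\le \rho\,\dintg$; then $\hat d\le \rho<6$ on yes-instances while $\hat d\ge \dintg\ge 6$ on no-instances, so the single threshold $\hat d<6$ decides the gap problem in polynomial time (all relevant parameters $|X|,|Y|,|\crit(\dynG_X)|,|\crit(\dynG_Y)|$ being polynomial), forcing $P=NP$.

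I expect the main obstacle to be the \emph{quantitative} Gromov--Hausdorff gap rather than the structural reduction: the identity $\dintg=2\,\dgh$ is clean and leaves everything resting on a factor-$6$ inapproximability of $\dgh$ — a polynomially encodable family of finite metric spaces on which telling $\dgh\le\tfrac12$ from $\dgh\ge 3$ is NP-hard. If this gap must be built from scratch rather than cited, the work lies in designing gadgets (e.g.\ from a constraint-satisfaction or graph-partition problem) whose feasible solutions yield low-distortion correspondences while every infeasible solution forces distortion at least $6$ times larger; the delicate \emph{soundness} step is the lower bound showing that on a ``no'' instance \emph{no} correspondence achieves small distortion, since upper-bounding $\dintg$ is easy (exhibit one tripod) but lower-bounding it requires ruling out all of them.
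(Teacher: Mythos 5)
Your proposal is correct and takes essentially the same route as the paper: Proposition \ref{prop:computation} there encodes a finite metric space as a DG whose edges appear at time $t=d_X(x,x')$, proves the same exact identity $\dintg(\dynG_X,\dynG_Y)=2\,\dgh\left((X,d_X),(Y,d_Y)\right)$ by unwinding the interleaving condition into the distortion of the induced correspondence, and then transfers hardness from the Gromov--Hausdorff distance. The only difference is the final ingredient: the paper instantiates the reduction with ultrametric spaces so as to invoke Schmiedl's inapproximability result for $\dgh$ (via the $3$-partition problem), whereas you leave the required quantitative GH gap as an assumed black box --- which, as you yourself anticipated, is exactly where the remaining work or citation lies.
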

	
	We prove Theorem \ref{thm:DG metric-complexity} in Appendix \ref{proof:DG-metric-complexity} by showing that certain NP-hard instances of the Gromov-Hausdorff distance between finite metric spaces can be reduced to the computation of the interleaving distance $\dintg$ between DGs.

	\begin{remark}[When is $\dintg\leq \eps$]\label{rem:basics for DG} 
In Definition \ref{def:DG interleaving distance}, the condition $\dint^{\graph(Z)}(\varphi_X^\ast\dynG_X,\varphi_Y^\ast\dynG_Y)\leq \eps$ with respect to the tripod $\tripod$ is equivalent to the following: For $(x,y),(x',y')\in R$, 
		
		\begin{enumerate}[label=(\roman*)]
			\item if $x\in V_X(t)$, then there exists  $s\in [t]^\eps:=[t-\eps,t+\eps]$ such that $y\in  V_Y(s).$  \label{item:basics for DG 1}
			\item if $\{x,x'\}\in E_X(t)$, then there exists  $s\in [t]^\eps$ such that $\{y,y'\} \in E_Y(s)$. \label{item:basics for DG 2} 
		\end{enumerate}
		Furthermore, the two statements obtained by exchanging the roles of $X$ and $Y$ in the above two items also hold.
	\end{remark}	
	
A sufficient condition for a pair of DGs to be $\eps$-interleaved is described in the following example.
	\begin{example}Fix $\eps\geq 0$. Let $\dgx$ and $\dgy$ be two DGs such that for every $x,x'\in X$ (resp. every $y,y'\in Y$), for any time interval $I$ of length $\eps$, there exists $t\in I$ such that the edge $\{x,x'\}$ (resp. $\{y,y'\}$) is present at $\dynG_X(t)$ (resp. $\dynG_Y(t)$). Then, $\dintg(\dynG_X,\dynG_Y)\leq \eps$. Indeed, by invoking Remark \ref{rem:join-semilattice cosheaf} \ref{item:join-semilattice cosheaf3}, it can be checked that \emph{any} tripod $R$ between $X$ and $Y$ is an  $\eps$-tripod between $\dynG_X$ and $\dynG_Y$.
	\end{example}

	\subsection{Interleaving distance between formigrams }\label{sec:formi-metric}
		In this section we introduce the interleaving distance between formigrams. This metric quantifies the structural difference between two grouping/disbanding behaviors over time. We also show that this metric dovetails with other celebrated metrics, which makes the summarization pipeline that is illustrated in Figure \ref{fig:entire picture} is entirely stable; see Remark \ref{rem:df generalizes dgh}, Proposition \ref{prop:formigram interleaving is mor discriminative than the reeb interleaving}, Corollary \ref{cor:df-db stability}, and Theorem \ref{prop:stability from dg to formi}. 
	
	Let $X$ and $Z$ be any two sets and let $P_X\in \subpart(X).$ For any map $\varphi:Z\rightarrow X$, the \emph{pullback} $P_Z:=\varphi^\ast P_X$ of $P_X$ via $\varphi$ is the subpartition of $Z$ defined as $P_Z=\{\varphi^{-1}(B): B\in P_X\}.$ Let $\theta_X$ be a formigram over $X$ and assume that $\varphi$ is surjective. Then the \emph{pullback} of $\theta_X$ via $\varphi$ is the formigram $\theta_Z:=\varphi^*\theta_X$ over $Z$ defined as $\theta_Z(t)= \varphi^*\theta_X(t)$ for all $t\in\T.$

	\begin{definition}[Comparison of  formigrams via pullbacks]\label{def:partition morphism}  Consider a tripod $\tripod$.
	\begin{enumerate}[label=(\roman*),itemsep=-1ex]
	    \item Let $P_X\in \subpart(X)$ and $P_Y\in \subpart(Y)$. we write $P_X\leq_R P_Y$ if $\varphi_X^\ast P_X\leq \varphi_Y^\ast P_Y$ in the poset $\subpart(Z)$. 
	    \item Let $\theta_X, \theta_Y$ be any two formigrams. we write $\theta_X\leq_R \theta_Y$ if for all $t\in \R$, $\varphi_X^\ast\theta_X(t)\leq \varphi_Y^\ast\theta_Y(t)$ in $\subpart(Z)$.
	\end{enumerate}
	
	\end{definition}

Let us recall that a formigram $\theta_X$ can be viewed as a constructible cosheaf $\Int\rightarrow \subpart(X)$ (Remark \ref{rem:join-semilattice cosheaf} \ref{item:join-semilattice cosheaf2}); this cosheaf sends each $I\in \Int$ to $\bigvee_I\theta_X:=\bigvee\{\theta_X(t):t\in I\}$. We now utilize both the interleaving distance between cosheaves (Definition \ref{def:interleaving}) 
and the notion of tripod for quantifying the degree of difference between two formigrams over possibly different underlying sets.

\begin{definition}	\label{def:interleaving distance2}
	Let $\theta_X$ and $\theta_Y$ be two formigrams. A tripod $\tripod$ is called an \emph{$\eps$-tripod between $\theta_X$ and $\theta_Y$} if $\dint^{\subpart(Z)}(\varphi_X^\ast\theta_X,\varphi_Y^\ast\theta_Y)\leq \eps$. The \emph{interleaving distance} between formigrams $\theta_X$ and $\theta_Y$ is defined as
	   \[\dintf(\theta_X,\theta_Y)=\min\{\eps\geq 0:\mbox{there exists an $\eps$-tripod between $\theta_X$ and $\theta_Y$}\}.\]
	If there is no $\eps$-tripod between $\theta_X$ and $\theta_Y$ for any $\eps\geq 0$, then we declare $\dintf(\theta_X,\theta_Y)=+\infty.$ 
\end{definition}

\ok{$\dintf$ is is an extended pseudo metric on formigrams, which can be proved in a similar way to $\dintg$.}

    \begin{remark}\label{rem:df generalizes dgh} \begin{enumerate}[label=(\roman*)]
        \item \ok{For unlabeled formigrams $\theta,\theta':\Int\rightarrow \Part$ (cf. Definition \ref{def:reeb graph as a cosheaf}), the interleaving distance $\dint^\Part(\theta,\theta')$ (cf. Definition \ref{def:interleaving}) can easily be infinite and thus might not be useful in practice; it is not difficult to check that $\dint^\Part(\theta,\theta')$ is finite only if the colimits of $\theta$ and $\theta'$ have the same cardinality. In this respect, by Remark \ref{rem:trivial labeling}, $\dintf$ can also be viewed as a  practical metric between unlabeled formigrams by defining \begin{equation}\label{eq:distance between unlabeled formigrams}
            \dintf(\theta,\theta'):=\dintf(\theta_X,\theta_Y)
        \end{equation} where $\theta=\unlabel\circ \theta_X$ and $\theta'=\unlabelY\circ\theta_Y$. The choices of the labeling sets $X$ and $Y$ do not affect the RHS of equation (\ref{eq:distance between unlabeled formigrams}) and thus $\dintf(\theta,\theta')$ is well-defined. }      \label{item:df generalizes dgh1}
        \item $\dintf$ between dendrograms agrees with (twice) the Gromov-Hausdorff distance between their canonically associated ultrametrics (cf. Proposition \ref{prop:ultra}).\label{item:df generalizes dgh2}
    \end{enumerate} 
	\end{remark}

	$\dintf$ is more discriminative than the interleaving distance for Reeb graphs (cf. Remark \ref{rem:interleaving between Reeb}).
	
	\begin{proposition}\label{prop:formigram interleaving is mor discriminative than the reeb interleaving}For any two formigrams $\theta_X$ and $\theta_Y$, we have
	$$\dintreeb(\reeb(\theta_X),\reeb(\theta_Y))\leq \dintf(\theta_X,\theta_Y).$$
	\end{proposition}
	\ok{In the appendix, we define a distance $\dintwreeb$ between weighted Reeb graphs which mediates between $\dintf$ and $\dintreeb$ (cf. Theorem \ref{thm:dintwreeb mediates} and Remark \ref{rem:tree of distances}).}
	
	\begin{example}\label{rem:dintf is not dint} Since $\theta_X$ and $\theta_Y$ in  Example \ref{ex:same underlying graph} have the same \ok{(un)weighted Reeb graph, they are not discriminated by the interleaving distance between (un)weighted Reeb graphs.  However, $\dint^\mathrm{F}(\theta_X,\theta_Y)= 1.$} Indeed, any tripod $R$ between $X$ and $Y$ fails to be an $\eps$-tripod between $\theta_X$ and $\theta_Y$ for $\eps<1$ and the tripod $\tripod$ which is defined as follows is a $1$-tripod: Let  $Z=\{(x_1,y_1),(x_2,y_2),(x_2,y_3)\}\subset X\times Y$ and let $\varphi_X$ and $\varphi_Y$ be the canonical projections to the first coordinate and the second coordinate, respectively.
	\end{example}
	
 Let us recall from Definition \ref{def:subpart} that the underlying set of $P\in \subpart(X)$ is defined as $\bigcup P$.  When $x\in X$ belongs to $\bigcup P$, we denote the block containing $x$ by $[x]$. The following remark \ok{describes an if-and-only-if condition for a tripod to be an $\eps$-tripod between formigrams.}
 
		\begin{remark}\label{prop:morphism for formi}Let $\theta_X, \theta_Y$ be any two formigrams and consider a tripod $\tripod$. Then the condition $\dint^{\subpart(Z)}(\varphi_X^\ast\theta_X,\varphi_Y^\ast\theta_Y)\leq \eps$ holds if and only if the following hold: for any $I\in \Int$,
		\begin{enumerate}[label=(\roman*)]
			\item If $(x,y)\in R$, $x$ belongs to the underlying set of $\bigvee_I\theta_X$ then $y$ belongs to the underlying set of $\bigvee_{I^\eps}\theta_Y$.   \label{item:morphism for formi1}
				\item If $(x,y),(x',y')\in R$, whenever $[x]=[x']$ in $\bigvee_I\theta_X$, it holds that $[y]= [y']$ in $\bigvee_{I^\eps}\theta_Y$.  \label{item:morphism for formi2}
		\end{enumerate}
			Also, the two statements obtained by exchanging the roles of $X$ and $Y$ in the above two items hold.
	\end{remark}

	\begin{proof}[Proof of Proposition \ref{prop:formigram interleaving is mor discriminative than the reeb interleaving}]Let $\tripod$ be an $\eps$-tripod between $\theta_X$ and $\theta_Y$. For $I\in \Int$, let us define the set map $f_{R,I}:\bigvee_I\theta_X\rightarrow \bigvee_{I^\eps}\theta_Y$ as $f_{R,I}([x]):=[y]\in \bigvee_{I^\eps}\theta_Y$ where $(x,y)\in R$ (this map is well-defined by Remark \ref{prop:morphism for formi}). Also, define $g_{R,I}:\bigvee_I\theta_Y\rightarrow \bigvee_{I^\eps}\theta_X$ in a similar way. The collections $f_R:=\{f_{R,I}\}_{I\in \Int}$  and $g_R:=\{g_{R,I}\}_{I\in \Int}$ form an $\eps$-interleaving pair between $\reeb(\theta_X)$ and $\reeb(\theta_Y)$, thus completing the proof. 
	\end{proof}
	
	\begin{corollary}\label{cor:df-db stability} For any two formigrams $\theta_X$ and $\theta_Y$, we have $$\bott(\barc(\theta_X),\barc(\theta_Y))\leq 2\cdot \dintf(\theta_X,\theta_Y).$$
	\end{corollary}
	\begin{proof} The claim directly follows from Theorem \ref{thm:reeb graph barcode stability} and Proposition \ref{prop:formigram interleaving is mor discriminative than the reeb interleaving}.
	\end{proof}

Summarizing DGs into formigrams via the path components functor $\pi_0$ (Definition \ref{def:from DG to formi}) is stable:
	
	\begin{theorem}\label{prop:stability from dg to formi}Let $\theta_X$ and $\theta_Y$ be the formigrams of two DGs $\dynG_X$ and $\dynG_Y$ respectively. Then,
		\[\dint^\mathrm{F}(\theta_X,\theta_Y)\leq \dintg(\dynG_X,\dynG_Y).\]
	\end{theorem}
	
		This inequality is tight (cf. Remark \ref{rem:DG and formi distance computation} \ref{item:DG and formi distance computation1}). 

\begin{proof}
Let $\eps\geq 0$ and let $\tripod$ be an $\eps$-tripod between $\dynG_X$ and $\dynG_Y$. We prove that $R$ is also an $\eps$-tripod between the formigrams $\theta_X$ and $\theta_Y$. By symmetry, it suffices to show that $\bigvee_I\theta_X\leq_R \bigvee_{I^\eps}\theta_Y$. Let $z\in Z$ and let $x:=\varphi_X(z)$, and $y:=\varphi_Y(z)$. Fix $t\in \T$. Suppose that $x$ is in the underlying set of $\bigvee_I\theta_X$. Since $\bigvee_I\dynG_X\leq_R \bigvee_{I^\eps}\dynG_Y$, we have $y\in \cup_{s\in I^\eps}V_Y(s)$, which is the underlying set of $\bigvee_{I^\eps}\theta_Y$. Pick another $z'\in Z$ and let $x':=\varphi_X(z')$ and $y':=\varphi_Y(z')$. Assume that $x,x'$ belong to the same block of $\theta_X(t)$, meaning that there is a sequence $x=x_0,x_1,\ldots,x_n=x'$ in $X$ such that $\{x_i,x_{i+1}\}\in E_X(t)$ for \hbox{$0\leq i\leq n-1$.}  For each $1\leq i\leq n-1$, pick $y_i\subset Y$ such that $(x_i,y_i)\in R$. Since $R$ is an $\eps$-tripod between $\dynG_X$ and $\dynG_Y$, we have \hbox{$\{y_i,y_{i+1}\}\in \bigcup_{s\in I^\eps}E_Y(s)$} (Remark \ref{rem:basics for DG}). Then, $y,y'$ belong to the same connected component of the graph $\bigcup_{I^\eps}\dynG_Y$ and in turn,
by Lemma \ref{lem:graph pullback and vee commute}, the same block of $\bigvee_{I^\eps}\theta_Y$. 
\end{proof}		

    \begin{remark}\label{rem:complexity of df} By Remark \ref{rem:df generalizes dgh} \ref{item:df generalizes dgh2}, constant factor approximations to  $\dintf$ cannot be obtained in polynomial time (Theorem \ref{thm:complexity-dIF}). 	\ok{The lower bound for $\dintf$ given by Proposition \ref{prop:formigram interleaving is mor discriminative than the reeb interleaving} can also lead to difficult computational problems (e.g. the graph-isomorphism problem \cite[Section 5]{de2016categorified}). Hence, computing the lower bound for $\dintf$ given by Corollary \ref{cor:df-db stability} is a realistic approach to comparing formigrams. Another tractable lower bound for $\dintf$ is introduced in the next section; Theorem \ref{thm:stability of betti}, which can sometimes be more discriminative than the lower bounds in Proposition \ref{prop:formigram interleaving is mor discriminative than the reeb interleaving} and Corollary \ref{cor:df-db stability}.}
	\end{remark}

\section{Categorical aspects of $\subpart(X)$, $\Part$ and $\sets$}
\label{sec:about partition category}

\ok{In this section we establish a few categorical results that will be useful in later sections: We show that (1) the unlabeling functor preserves limits and colimits of connected diagrams (cf. Proposition \ref{prop:unlabeling preserves limits and colimits}) and that (2)  the composition of the three functors in Definition \ref{def:three functors} preserves colimits of connected diagrams (cf. Proposition \ref{prop:colimit}). Lastly, we compute coimages in the category $\Part$ (cf. Proposition \ref{prop:image and coimage}).}
	
\subsection{(Co)limit preserving properties of unlabeling and collapsing functors}\label{sec:limits and colimits in partition category}

Let $X$ be a nonempty finite set. The unlabeling functor $\unlabel:\subpart(X)\rightarrow \Part$ (Definition \ref{def:three functors} \ref{item:unlabeling functor})  preserves limits and colimits of connected diagrams in $\subpart(X)$:

\begin{proposition}\label{prop:unlabeling preserves limits and colimits} Let $\Pb$ be a connected poset. For any $\theta_X:\Pb\rightarrow \subpart(X)$, we have $\unlabel\left(\varprojlim \theta_X\right)\cong \varprojlim \unlabel(\theta_X)$ and  $\unlabel\left(\varinjlim \theta_X\right)\cong \varinjlim \unlabel(\theta_X)$. 
\end{proposition}

\ok{The proof is rather straightforward and hence we omit it.}
	
	\begin{definition}\label{def:natural} Let $P,Q\in \subpart(X)$ such that $P\leq Q$. The \emph{canonical map} $P\rightarrow Q$ is the unique map which sends each block $B\in P$ to the \emph{unique} block $C\in Q$ such that $B\subset C$. \ok{In other words, the canonical map is the image of $P\leq Q$ via the three functors in Definition \ref{def:three functors}.}
\end{definition}
    
    \begin{definition}\label{def:forget from subpart to set} The \emph{\collapsing{}} $\emb:\subpart(X)\rightarrow \sets$ is defined as the composition of the three functors in Definition \ref{def:three functors}, i.e. $\unweight\circ \A \circ \unlabel$. Namely, $\emb$ sends each $P\in\subpart(X)$ to $P\in \ob(\sets)$ and each morphism $P\leq Q$ in $\subpart(X)$ to the canonical map $P\rightarrow Q$. 
    \end{definition}
    
    The \collapsing{} preserves the colimit of any diagram in $\subpart(X)$ indexed by a \emph{connected} poset (Definition \ref{interval} \ref{item:connectivity}). For example, let $X:=\{x_1,x_2,x_3,x_4\}$ and consider the diagram $\theta_X:\{1\leq 2\geq 3\leq 4\}\rightarrow \subpart(X)$ defined as 
    \[\{x_1|x_2|x_3|x_4\}\leq \{x_1x_2|x_3x_4\}\geq \{x_1|x_2|x_3x_4\}\leq\{x_1|x_2x_3x_4\}.\]
    Then, $\varinjlim \theta_X =\{x_1x_2x_3x_4\}\in \subpart(X)$ and $\emb \left(\varinjlim \theta_X\right) =\{\{x_1,x_2,x_3,x_4\}\}\in \ob(\sets)$, which is the colimit of the set diagram 
    \[\{\{x_1\},\{x_2\},\{x_3\},\{x_4\}\}\rightarrow \{\{x_1,x_2\},\{x_3,x_4\}\}\leftarrow \{\{x_1\},\{x_2\},\{x_3,x_4\}\}\rightarrow\{\{x_1\},\{x_2,x_3,x_4\}\}\]
    where every map in the cocone is a canonical map. We omit the proof of the following proposition.
    
    \begin{proposition}\label{prop:colimit} Let $\Pb$ a connected poset. For any $\theta_X:\Pb\rightarrow \subpart(X)$, we have $\emb \left(\varinjlim \theta_X\right) \cong \varinjlim \emb \left(\theta_X\right)$.
    \end{proposition}

\ok{We remark that the collapsing functor $\emb$ does \emph{not} preserve limits (which is actually a consequence that the agglomeration functor $\A$ does not preserve limits). For example, let $Y=\{y_1,y_2\}$ and consider the diagram $\theta_Y:\{1\leq 2\geq 3\}\rightarrow \subpart(Y)$ given by $\{y_1\}\leq \{y_1y_2\} \geq \{y_2\}.$ Then, $\varprojlim \theta_Y=\emptyset$ which is the greatest lower bound of $\{y_1\},\{y_1y_2\},\{y_2\}$ in $\subpart(Y)$. However, $\emb(\theta_X)$ is represented as the set diagram $\{\bullet\}\rightarrow \{\bullet\}\leftarrow \{\bullet\}$ and thus $\varprojlim \emb(\theta_X)$ contains a single element.}

\subsection{(Co)images in the category of partitions}\label{sec:Images and coimages in the category of partitions}

The goal of this section is to compute coimages in $\Part$ (cf. Proposition \ref{prop:image and coimage}).

\paragraph{Images and coimages in category theory \cite{mitchell1965theory}.} A morphism $f:a\rightarrow b$ is said to be a \emph{monomorphism} (\emph{mono} in short) if $f$ is left-cancellative: for any morphisms $k_1, k_2: c \rightarrow a$, if $f\circ k_{1}=f\circ k_{2}$, then $k_{1}=k_{2}$. Such $f$ is written as $f:a\hookrightarrow b$ and $a$ is called a \emph{subobject} of $b$. On the other hand, a right-cancellative morphism $g:a\rightarrow b$ is said to be an \emph{epimorphism} (\emph{epi} in short), written as $g:a\twoheadrightarrow b$, and $b$ is called a \emph{quotient object} of $a$.

The \emph{image} of a morphism $f:X\rightarrow Y$ is defined as the smallest subobject of $Y$ which $f$ factors through.

\begin{definition}[Images]\label{def:images}
Given a morphism $f:X\rightarrow Y$, an \emph{image} of $f$ (if it exists) is a mono $m:I\hookrightarrow Y$ such that there is a morphism $f_m:X\rightarrow I$ such that $f=m\circ f_m$, for any mono $z:Z\hookrightarrow Y$ and a morphism $f_z:X\rightarrow Z$ with $f=z\circ f_z$, there is a unique morphism $u:I\rightarrow Z$ such that $m=z\circ u$.

\[\begin{tikzcd}X\arrow[->]{rr}{f} \arrow{rd}{f_m} \arrow{rdd}[swap]{f_z}&&Y\\
& I \arrow[hook]{ru}{m}\arrow[dashed]{d}{u}\\
& Z \arrow[hook]{ruu}[swap]{z}
\end{tikzcd}
\]
The object $I$ will be sometimes denoted by $\im(f)$.
\end{definition}

The \emph{coimage} of a morphism is the dual notion of the image of a morphism. In many categories (such as the category of sets, the category of groups, the category of rings, or the category of vector spaces, and etc.), the coimage is canonically isomorphic to the image, often called the \emph{first isomorphism theorem}. However, in the category $\Part$, those two are turned out to be different in general.

\begin{definition}[Coimages]\label{def:coimages}
 Given a morphism $f:X\rightarrow Y$, a coimage of $f$ (if it exists) is an epi $ c:X\rightarrow C$ such that there is a morphism $f_{c}:C\rightarrow Y$ with $ f=f_{c}\circ c$,
for any epi $z:X\rightarrow Z$ for which there is a map $\displaystyle f_{z}:Z\rightarrow Y$ with $f=f_{z}\circ z$, there is a unique map $u:Z\rightarrow C$ such that $c=u\circ z$. 

\[\begin{tikzcd}X\arrow[->]{rr}{f} \arrow[two heads]{rd}{c} \arrow[swap,two heads]{rdd}{z}&& Y\\
&C \arrow{ru}{f_c}\\
& Z \arrow{ruu}[swap]{f_z} \arrow[swap,dashed]{u}{u}
\end{tikzcd}
\]
\end{definition}

\paragraph{Images and coimages in $\Part$.} It is not difficult to check that every morphism $f:(X,P_X)\rightarrow (Y,P_Y)$ in $\Part$ is a mono. On the other hand, we have the following characterization of epis in $\Part$. 

\begin{proposition}\label{prop:epi iff bijective} A morphism $f:(X,P_X)\rightarrow (Y,P_Y)$ in $\mathbf{Part}$ is an epi if and only if $f:X\hookrightarrow Y$ is surjective (hence $f$ is bijective).
\end{proposition}

We remark that the bijectivity of $f:X\rightarrow Y$ does not imply that $f$ is an isomorphism in between $(X,P_X)$ and $(Y,P_Y)$.
\begin{proof}
For the forward direction, we prove the contrapositive. Suppose that $f$ is not surjective, i.e. there exists $y\in Y$ to which no $x\in X$ is mapped via $f$. Let $Z=\{z_1,z_2\}$. Let $k_1,k_2:Y\rightarrow Z$ be any pair of maps such that they differ only at $y$, i.e. $k_1=k_2$ on $Y\setminus \{y\}$ and $k_1(y)\neq k_2(y)$. Then both $k_1$ and $k_2$ are morphisms from $(Y,P_Y)$ to $(Z,\{z_1z_2\})$. Although $k_1\circ f=k_2\circ f$, we do not have that $k_1= k_2$. Hence, $f$ is not an epi.

We prove the backward direction. Consider any two morphisms $k_1,k_2:(Y,P_Y)\rightarrow (Z,P_Z)$ such that $k_1\circ =k_2\circ f$. Let $y\in Y$. Since $f$ is surjective, there exists $x\in X$ such that $f(x)=y$. Hence, $k_1\circ f(x)=k_2\circ f(x)$ implies $k_1(y)=k_2(y)$. Since $y$ was arbitrarily chosen in $Y$, we have that $k_1=k_2$.
\end{proof}

Consider any morphism $f:(X,P_X)\rightarrow (Y,P_Y)$ in $\Part$.  Let $f^{-1}(P_Y):=\left\{f^{-1}(B):B\in P_Y\right\}$ which is a partition of $X$. Notice that the identity map $\id_X$ on $X$ is a morphism from $(X,P_X)$ to $(X,f^{-1}(P_Y))$. Now we will see that, in the category $\Part$, the image is not isomorphic to the coimage in general. 

\begin{proposition}\label{prop:image and coimage} For any morphism $f:(X,P_X)\rightarrow (Y,P_Y)$ in $\Part$,

\begin{enumerate}[label=(\roman*),itemsep=-1ex]
    \item  $f$ itself is an image of $f$.\label{item:image}
    \item The morphism $\mathrm{id}_X:(X,P_X)\rightarrow (X,f^{-1}(P_Y))$, the identity set map on $X$, is a coimage of $f$. \label{item:coimage}
\end{enumerate}
\end{proposition}

\begin{proof} Item \ref{item:image} directly follows from the fact that $f$ is a mono. We prove Item \ref{item:coimage}. Since $\id_X:X\rightarrow X$ is bijective, by Proposition \ref{prop:epi iff bijective}, $\id_X$ is an epi from $(X,P_X)$ to $(X,f^{-1}(P_Y))$. Note also that $f$ is a morphism from $(X,f^{-1}(P_Y))$ to $(Y,P_Y)$ with the obvious identity $f\circ \mathrm{id}_X=f$. Assume that there exists a pair of an epi $z:(X,P_X)\twoheadrightarrow (Z,P_Z)$ and a morphism $f_z:(Z,P_Z)\rightarrow (Y,P_Y)$ such that $f=f_z\circ z$. Since $z$ is an epi, the set map $z:X\rightarrow Z$ must be bijective. The proof ends by observing that the inverse $z^{-1}:Z\rightarrow X$ is the \emph{unique} morphism $u$  from $(Z,P_Z)$ to $(X,f^{-1}(P_Y))$ such that $\mathrm{id}_X=u\circ z$.
\end{proof}

\begin{example} Consider $i:(\{x_1,x_2\},\{x_1|x_2\})\rightarrow (\{x_1,x_2,x_3\},\{x_1x_2x_3\})$ where $i$ is the canonical inclusion $\{x_1,x_2\}\hookrightarrow \{x_1,x_2,x_3\}$. The coimage of $i$ is the morphism $\id_{\{x_1,x_2\}}:(\{x_1,x_2\},\{x_1|x_2\})\rightarrow (\{x_1,x_2\},\{x_1x_2\})$.
\end{example}


\begin{figure}
    \centering
    \includegraphics[width=\textwidth]{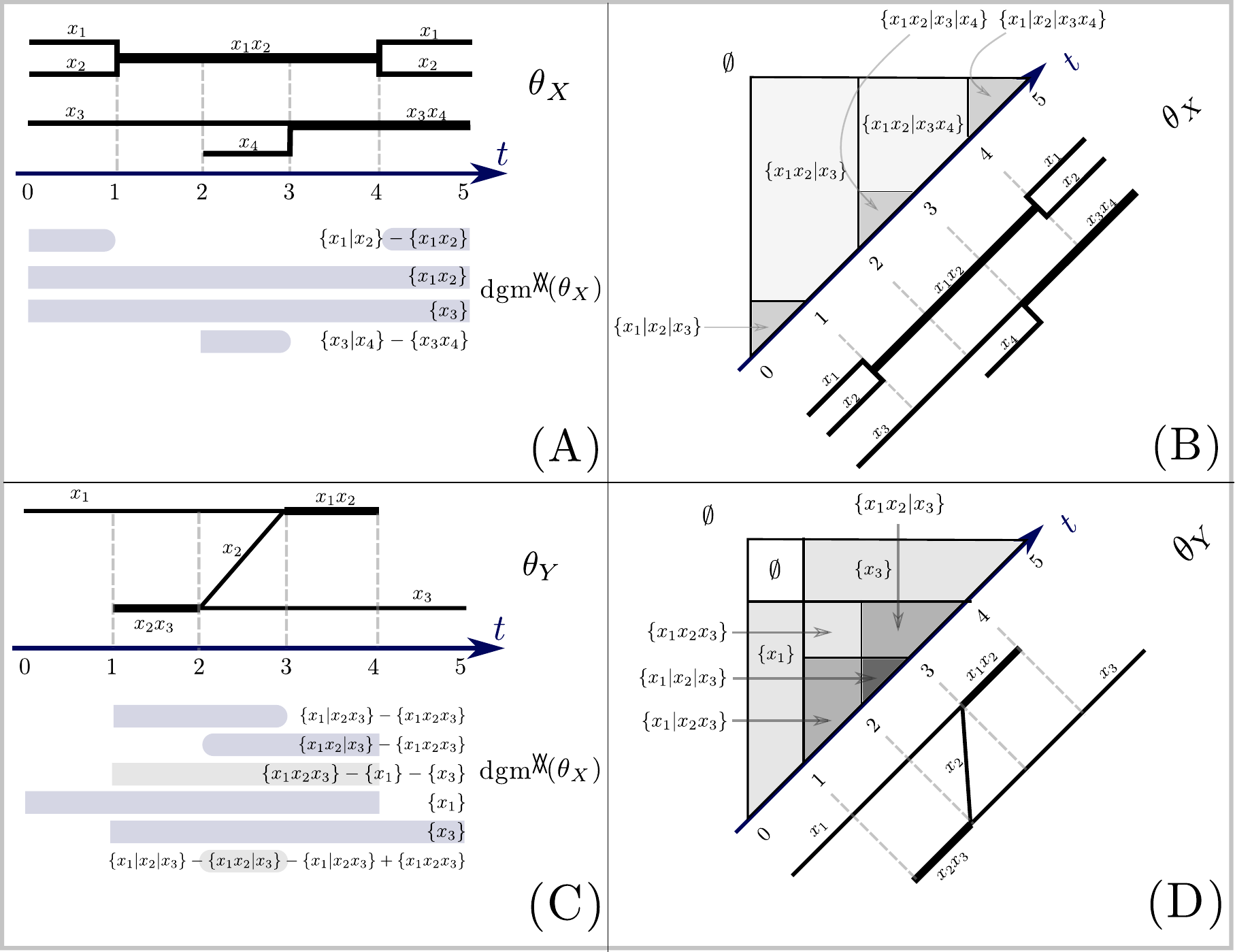}
    \caption{(A) A formigram $\theta_X$ over the set  $X:=\{x_1,x_2,x_3,x_4\}$ with support $[0,5]$ (i.e. union of the lifespans of all points) and the persistence clustergram $\dgm^{\protect\veewedge}(\theta_X)$. (B) An illustration of $\protect\veewedge \theta_X:\Int\rightarrow \subpart(X)$ for $\theta_X$ in (A). The persistence clustergram $\dgm^{\protect\veewedge}(\theta_X)$ in (A) is the M\"obius inversion of $\protect\veewedge \theta_X$. (C) A formigram $\theta_Y$ over $Y:=\{x_1,x_2,x_3\}$ and its persistence clustergram. Grey intervals carry silhouette values that are either negative or zero; $-1$ for $\{x_1x_2x_3\}-\{x_1\}-\{x_3\}$ and $0$ for $\{x_1|x_2|x_3\}-\{x_1x_2|x_3\}-\{x_1|x_2x_3\}+\{x_1x_2x_3\}$. (D) An illustration of $\protect\veewedge\theta_Y:\Int\rightarrow \subpart(X)$ for $\theta_Y$ in (C). The persistence clustergram $\dgm^{\protect\veewedge}(\theta_Y)$ in (C) is the M\"obius inversion of this $\protect\veewedge \theta_Y$. \label{fig:weak_vs_strong}}
\end{figure}

\section{Summarizing formigrams via M\"obius inversion}\label{sec:summarizing formigrams via mobius}

In this section we describe two novel summaries of formigrams: \emph{maximal group diagrams} (cf. Figure \ref{fig:maximal group diagram} (C)) and \emph{persistence clustergrams} (cf. Figure \ref{fig:entire picture} (G)); Definitions \ref{def:maximal group diagram} and \ref{def:persistence clustergram}. 
 
 \ok{In Section \ref{sec:mobius} we review the M\"obius inversion formula for a function from a locally finite poset. In Section \ref{sec:group-valued persistence diagrams and their silhouettes}, we introduce a notion of \emph{silhouette}\footnote{Silhouettes in this paper have no relation with the \emph{persistence silhouettes} in \cite{chazal2014stochastic}.} for a group-valued map from a poset; this notion allows us to have the summarization process that is illustrated as the arrow from (G) to (H) in Figure \ref{fig:entire picture}.} In Section \ref{sec:maximal group diagrams}, we show that the celebrated notion of \emph{maximal groups} by Buchin et al. \cite{buchin2013trajectory} is an instance of the generalized persistence diagrams considered in \cite{kim2018generalized,patel2018generalized}. Namely, we prove that maximal groups can be obtained by computing the M\"obius inversion of a suitably defined rank function. In Section \ref{sec:persistence clustergrams}, we introduce persistence clustergrams, which can be regarded as being ``dual" to maximal group diagrams. In Section \ref{sec:persistent cluster counting} and Section \ref{sec:stability of persistent cluster counting functor}, we consider the silhouette of a persistence clustergram and establish its stability.
 
\begin{framed}
\begin{convention}\label{conv:real intervals and zz intervals}
In Section \ref{sec:summarizing formigrams via mobius} we only consider formigrams with finite support whose critical points are contained in $\Z$ (cf. Definitions \ref{def:cosheaf conditions} and \ref{def:formigram}). We will not distinguish between the real interval $\langle a,b\rangle$ and $\langle a,b \rangle_{\ZZ}\in \Int(\ZZ)$ for any $a\leq b$ in $\Z$ (cf. Notation \ref{not:intervals in zz}). 
\end{convention}
\end{framed}

	\subsection{M\"obius inversion of a function on a poset}\label{sec:mobius} 
We review the M\"obius inversion formula for a function $f$ on a locally finite poset \cite{rota1964foundations}.  In a nutshell,  M\"obius  inversion is a discrete analogue of the derivative of a real-valued map from calculus. For example, given a map $f:\Z\rightarrow \R$ (where $\Z$ is equipped with the canonical order), its M\"obius inversion $f':\Z\rightarrow \R$ is defined as $f'(n)=f(n)-f(n-1)$ for $n\in \Z$, capturing the change of $f$ at the point $n$ relative to the value of $f$ at the point $n-1$. More generally, we can define M\"obius inversion of a function on any locally finite poset.
	\begin{definition}\label{def:locally finite} A poset $\Pb$ is said to be \emph{locally finite} if for all $p,q\in \Pb$ with $p\leq q$ the set $\{r\in \Pb: p\leq r\leq q\}$ is finite.
\end{definition}

Let $\Pb$ be a locally finite poset. The \emph{M\"obius function} $\mu_{\Pb}:\Pb\times\Pb\rightarrow \Z$ of $\Pb$ is defined recursively as

\begin{equation}\label{eq:induction}
    \mu_{\Pb}(p,q):=\begin{cases}1,& \mbox{$p=q$,}\\ -\sum_{p\leq r< q}\mu_{\Pb}(p,r),& \mbox{$p<q$,} \\ 0,& \mbox{otherwise.} \end{cases}
\end{equation}

\begin{example}\label{ex:mobius function on intervals of zz}Let $\Pb:=\Int(\ZZ)$ (Definition \ref{def:intervals in zz}). Then,
\[\mu_\Pb(I,J)=\begin{cases} 1,&I=J \mbox{ or  $I\subsetneq J$ with $\abs{J\setminus I}=2$,}\\-1,&I\subsetneq J\ \mbox{ with $\abs{J\setminus I}=1$,}\\0,&\mbox{otherwise.}
\end{cases}\]
\end{example}

We are interested in carrying out M\"obius inversion of abelian-group-valued maps defined on a given poset. Let $G$ be an abelian group. For any $m\in \Z_+$ and $x\in G$, we will interchangeably use $m\cdot x$ and $x\cdot m$ to denote $\underbrace{x+\cdots+x}_{\footnotesize{\mbox{$m$ terms}}}$.  For a negative $m\in \Z$, we will also use $m\cdot x$ or $x\cdot m$ to denote $-((-m)\cdot x)$.

\begin{theorem}[M\"obius inversion formula]\label{thm:Mobius} Let $\Pb$ be a locally finite poset and let $G$ be an abelian group. Suppose that for a function $f:\Pb \rightarrow G$,  there exists $p\in \Pb$ such that $f(q)=0$ unless $q\geq p$. 
Then, a function $g:\Pb\rightarrow G$ satisfies $f(q)=\sum_{r\leq q} g(r)$ for all $q\in \Pb$ if and only if $\displaystyle g(q)=\sum_{r\leq q} f(r)\cdot \mu_{\Pb}(r,q)$ for all $q\in \Pb$. \ok{We will often write $g=f'$.}
\end{theorem}

A difference between this theorem and \cite[Proposition 2 (p.344)]{rota1964foundations} is that the codomain of the functions $f$ and $g$ is an abelian group, whereas the codomain is a field in $\cite{rota1964foundations}$. Nevertheless, the verbatim proof in \cite{rota1964foundations} works in our setting. In \cite{patel2018generalized,kim2018generalized}, M\"obius inversion of abelian-group-valued functions is considered in order to obtain \emph{generalized persistence diagrams}. 

\begin{notation}[Neighborhood extension]\label{not:extended intervals of zz}
Let $I\in \Int(\ZZ)$. 
\begin{enumerate}[label=(\roman*),itemsep=-1ex]
    \item By $I^-$ we denote $I\cup\{(i,j)\}\in \Int(\ZZ)$ where $(i,j)\in \ZZ$ is the ``lower-left" neighborhood of $I$ in $\ZZ$. For example, for $I=(-1,1]_{\ZZ}$ (cf. Figure \ref{fig:intervals}), we have $I^-=[-1,1]_\ZZ$.
    \item  By $I^+$ we denote $I\cup\{(i',j')\}\in \Int(\ZZ)$ where $(i',j')\in \ZZ$ is the ``upper-right" neighborhood of $I$ in $\ZZ$. For example, for $I=(-1,1]_{\ZZ}$, we have $I^+=(-1,2)_\ZZ$. 
    
    \item By $I^{\pm}$ we denote $I^-\cup I^+$.
\end{enumerate}
\end{notation} 
\ok{The example below directly follows from Example \ref{ex:mobius function on intervals of zz}, Theorem \ref{thm:Mobius}, and the following fact: 
    \emph{For the opposite poset $\Pb^{\mathrm{op}}$ of $\Pb$, the M\"obius function $\mu_{\Pb^{\mathrm{op}}}$ is obtained by $\mu_{\Pb^{\mathrm{op}}}(p,q)=\mu_{\Pb}(q,p)$ for all $p,q\in \Pb$}  \cite[p.345]{rota1964foundations}.

}
\begin{example}\label{ex:mobius inversion on zz} Assume that for a function $f:\Int(\ZZ) \rightarrow G$,  there exists $J_0\in \Int(\ZZ)$ such that $f(I)=0$ unless $J_0\supset I$.  Let $g:\Int(\ZZ)\rightarrow G$ be such that $f(I)=\sum_{J\supset I}g(J)$ for all $I\in \Int(\ZZ)$. Then, 
\[f'(I)=g(I)=f(I)-f(I^-)-f(I^+)+f(I^{\pm})\hspace{2mm}\mbox{for all $I\in \Int(\ZZ)$.}\]
\ok{Note that (1) this formula is reminiscent of the original definition of \emph{persistence diagrams} \cite{cohen2007stability,ELZ02} and (2) this formula was utilized in \cite{kim2018generalized} to define generalized persistence diagrams for functors indexed by $\ZZ$.}        
\end{example}

\subsection{Silhouettes and formal sums of clusters }\label{sec:group-valued persistence diagrams and their silhouettes}

\ok{We introduce the notion of the \emph{silhouette} of a free-abelian-group valued map on a poset.} Let $A$ be a nonempty finite set. By $\Z A$ we denote the free abelian group generated by $A$. This means that any element in $\Z A$ is a formal sum of elements from $A$ with integer coefficients. 

\begin{remark}\label{rem:group homomorphism}
The map $\abs{-}:\Z A\rightarrow \Z$ that sends $\sum_{i=1}^\ell m_{i}a_i$ ($m_i\in \Z$, $a_i\in A$) to $\sum_{i=1}^\ell m_i$ is a group homomorphism.
\end{remark}

In the rest of this section let $\Pb$ be a locally finite poset.

\begin{definition}[Support and silhouette]\label{def:silhouette} Let $G$ be an abelian group. Consider a map $f:\Pb\rightarrow G$. 

\begin{enumerate}[label=(\roman*),itemsep=-1ex]
    \item The \emph{support} of $f$ is defined as $\supp(f):=\{p\in\Pb: f(p)\neq 0\in G\}$.
    \item Assume that $G=\Z A$. The \emph{silhouette} $\abs{f}:\Pb\rightarrow \Z$ of $f$ is defined as follows. For $p\in\Pb$ if $f(p)=\sum_{i=1}^\ell m_{i}a_i$ ($m_i\in \Z$, $a_i\in A$), then $\abs{f}(p):=\sum_{i=1}^\ell m_i$.\label{item:silhouette2}
\end{enumerate}
\end{definition}

Given an $f:\Pb\rightarrow \Z A$, $\supp(\abs{f})$ is contained in $\supp(f)$ but the converse is not always true. For example, if $f(p)=a_1-a_2$ with $a_1\neq a_2$, then $p\in \supp(f)$ but $p\notin\supp(\abs{f})$.

\begin{notation}\label{not:set notation}We will often represent $f:\Pb\rightarrow G$ by the set $\{(p,f(p)):p\in \supp(f)\}$. In particular, when $G=\Z$ and $\im(f)\subset \{0,1\}$, $f$ will be identified with $\supp(f)$. 

\end{notation}

The silhouette, as an operation, commutes with M\"obius inversion over any locally finite poset: 

\begin{proposition}\label{prop:silhouette commutes with mobius} Given a map $f:\Pb\rightarrow \Z A$, let $f'$ be the M\"obius inversion of $f$, i.e. for all $p\in \Pb$, $f'(p)=\sum_{q\leq p}f(q)\cdot \mu_\Pb(q,p)$. Then, the M\"obius inversion of $\abs{f}:\Pb\rightarrow \Z$ equals $\abs{f'}$, i.e. $\abs{f}'=\abs{f'}$. 
\end{proposition}

\begin{proof} Let $p\in \Pb$. We have:
\begin{align*}
    \abs{f'}(p)&=\abs{\sum_{q\leq p}f(q)\cdot \mu_\Pb(q,p)}&\mbox{by Definition \ref{def:silhouette} \ref{item:silhouette2}}\\
    &=\sum_{q\leq p}\abs{f}(q)\cdot \mu_\Pb(q,p)&\mbox{by Remark \ref{rem:group homomorphism}}\\&=\abs{f}'(p).
\end{align*}
\end{proof}

\ok{In the rest of the paper, the basis $A$ of the free abelian group $\Z A$ will be $\pow(X)$, the power set of some finite set $X$.}

\begin{example} 
\ok{Let $X:=\{x_1,x_2,x_3\}$ and assume that a given map $f:\Int(\ZZ)\rightarrow \fpow(X)$ satisfies
\begin{align}\label{eq:mobius}
   \begin{cases}
    f((1,2)_\ZZ)&=\{x_1\}+\{x_2\}+\{x_3\},\\ f([1,2)_\ZZ)&=\{x_1\}+\{x_2,x_3\},\\ f((1,2]_\ZZ)&=\{x_1,x_2\}+\{x_3\},\\ f([1,2]_\ZZ)&=\{x_1,x_2,x_3\}.\\
   \end{cases}
\end{align} Then, from Example \ref{ex:mobius inversion on zz}, we have \[f'((1,2)_{\ZZ})=\{x_1\}+\{x_2\}+\{x_3\}-(\{x_1\}+\{x_2,x_3\})-(\{x_1,x_2\}+\{x_3\})+\{x_1,x_2,x_3\},\] 
and thus $\abs{f'}((1,2)_{\ZZ})=1+1+1-(1+1)-(1+1)+1=0$. Next we consider $\abs{f}:\Int(\ZZ)\rightarrow \Z$. By equations in (\ref{eq:mobius})
\[\abs{f}((1,2)_\ZZ)=3, \hspace{5mm}\abs{f}([1,2)_\ZZ)=\abs{f}((1,2]_\ZZ)=2, \hspace{5mm}\abs{f}([1,2]_{\ZZ})=1.\]
Again from  Example \ref{ex:mobius inversion on zz}, we have $\abs{f}'((1,2)_{\ZZ})=3-2-2+1=0$ which equals $\abs{f'}((1,2)_{\ZZ})$.}
\end{example}

 \ok{We} define a natural inclusion $\iota:\subpart(X)\hookrightarrow \fpow(X)$ as follows. For any $P=\{B_i\}_{i=1}^\ell \in \subpart(X)$, let $\iota(P):=\sum_{i=1}^\ell 1\cdot B_i$. Let us also define a natural left-inverse $\Pi:\fpow(X)\rightarrow \subpart(X)$ of $\iota$ as follows. 
\begin{definition}[Left-inverse to $\iota:\subpart(X)\hookrightarrow \fpow(X)$]\label{def:left inverse} 
Let $g\in \fpow(X)$ be any nonzero element that is given as $\sum_{i=1}^\ell m_{i}B_i$ ($m_i(\neq 0)\in \Z$) where $B_i\neq B_j$ for $i\neq j$. Then, $\Pi(g)$ is defined as the finest common coarsening $\bigvee\{\{B_i\}\}_{i=1}^n$. The trivial element $0\in \fpow(X)$ is sent to $\emptyset$ via $\Pi$.
\end{definition}
 For example, if $X:=\{x_1,x_2,x_3\}$, $\Pi$ sends $\{x_1,x_2\}+\{x_2,x_3\}\in \fpow(X)$ to $\{\{x_1,x_2\}\}\vee\{\{x_2,x_3\}\}=\{\{x_1,x_2,x_3\}\}\in \subpart(X)$. 

\subsection{Maximal group diagrams}\label{sec:maximal group diagrams}

Buchin et al. defined the concept of \emph{maximal groups}\footnote{In this section \emph{groups} do not stand for groups in abstract algebra. See Remark \ref{rem:maximal groups}.}
 for a set $\mathcal{X}$ of trajectories in Euclidean space \cite{buchin2013trajectory} (cf. Remark \ref{rem:maximal groups}). We observe that maximal groups are totally determined by the formigram induced by the $\delta$-connectivity dynamic graph of the set of trajectories  (cf. Example \ref{ex:special DGs}), which enables us to prove that maximal groups are obtained by M\"obius inversion (cf. Theorem \ref{thm:maximal groups via Mobius inversion}). A corollary is that the collection of all maximal groups of $\mathcal{X}$ contains the same information as the formigram $\theta$ induced by $\mathcal{X}$ (cf. Remark \ref{rem:maximal groups}). 
 
\begin{definition}\label{def:group}
Let $\theta_X$ be a formigram over $X$. A subset $G\subset X$ is called a \emph{group} over an interval $I_G\subset \R$ if for all $t\in I$, there exists $B_t\in \theta_X(t)$ such that $G\subset B_t$. A group $H$ \emph{covers} group $G$ if $G\subset H$ and $I_G\subset I_H$. 
\end{definition}

\begin{definition}\label{def:maximal groups} Let $\theta_X$ be a formigram and let $G\subset X$ be a group over $I_G\subset \R$. Then, $G$ is said to be a \emph{maximal group} (over $I_G$) if there is no group $H\subset X$ that covers $G$.
\end{definition}

 Let us recall the natural injection $\iota:\subpart(X)\hookrightarrow \fpow(X)$ which sends $\{B_i\}_{i=1}^\ell \in \subpart(X)$ to $\sum_{i=1}^\ell 1\cdot B_i$, a formal sum of blocks of $P$. 
 In what follows, $\{B_i\}_{i=1}^\ell \in \subpart(X)$ will be identified with $\sum_{i=1}^\ell 1\cdot B_i$.

\begin{definition}\label{def:wedge rank function} Let $\theta_X$ be a formigram. The \emph{$\bigwedge$-rank function}  $\bigwedge\theta_X:\Int(\ZZ)\rightarrow \subpart(X)$  is defined as $I\mapsto \bigwedge_I\theta_X:=\bigwedge\{\theta_X(i,j):(i,j)\in I\}$. \ok{By Convention \ref{conv:real intervals and zz intervals}, when $I=\langle a,b\rangle_\ZZ$, we have $\bigwedge_I\theta_X=\bigwedge_{t\in\langle a,b \rangle} \theta_X(t)$.}
\end{definition}

 For $I\in \Int(\ZZ)$, note that $\bigwedge_I \theta_X$ is a collection of groups over $I$. In particular, $G\in \bigwedge_I \theta_X$ is a maximal group if there is no group $G'\subset X$ covering $G$ over either $I^-$ or $I^+$ (cf.  Notation \ref{not:extended intervals of zz}).  Let $\dgm^{\bigwedge}(\theta_X)$ be the M\"obius inversion of $\bigwedge \theta_X$ over $\Intzzop$ (cf. Example \ref{ex:mobius inversion on zz}).  This means that for all $I\in \Int(\ZZ)$

\begin{equation}\label{eq:wedge Mobius inversion} \dgm^{\bigwedge}(\theta_X)(I):=\bigwedge_I\theta_X-\bigwedge_{I^+}\theta_X-\bigwedge_{I^-}\theta_X+\bigwedge_{I^{\pm}}\theta_X,
\end{equation}
where these sums and subtractions should be interpreted as operations in $\fpow(X)$. Let us recall the map $\Pi: \fpow(X) \rightarrow \subpart(X)$ from Definition \ref{def:left inverse}. 

\begin{definition}\label{def:maximal group diagram} The \emph{maximal group diagram} of $\theta_X$ is defined as $\Pi\circ \dgm^{\bigwedge}(\theta_X):\Int(\ZZ)\rightarrow \subpart(X)$. \ok{We will simply write $\Pi \dgm^{\bigwedge}(\theta_X)$ for $\Pi\circ \dgm^{\bigwedge}(\theta_X)$.}
\end{definition}

\ok{We adapt Notation \ref{not:set notation} as follows. For $I\in \Int(\ZZ)$, if $\pidgm(\theta_X)(I)$ contains a block $G\subset X$, then we write $(I,G)\in \pidgm(\theta_X)$.} The following theorem says that $\pidgm(\theta_X)$ encodes all the maximal groups of $\theta_X$. See Figures \ref{fig:maximal group diagrams and persistence clustergrams} (A) and (B) for examples; \ok{any pair $(I,G)\in \pidgm(\theta_X)$ is depicted as the interval $I$ annotated by $G$.} 

\begin{figure}
    \centering
    \includegraphics[width=\textwidth]{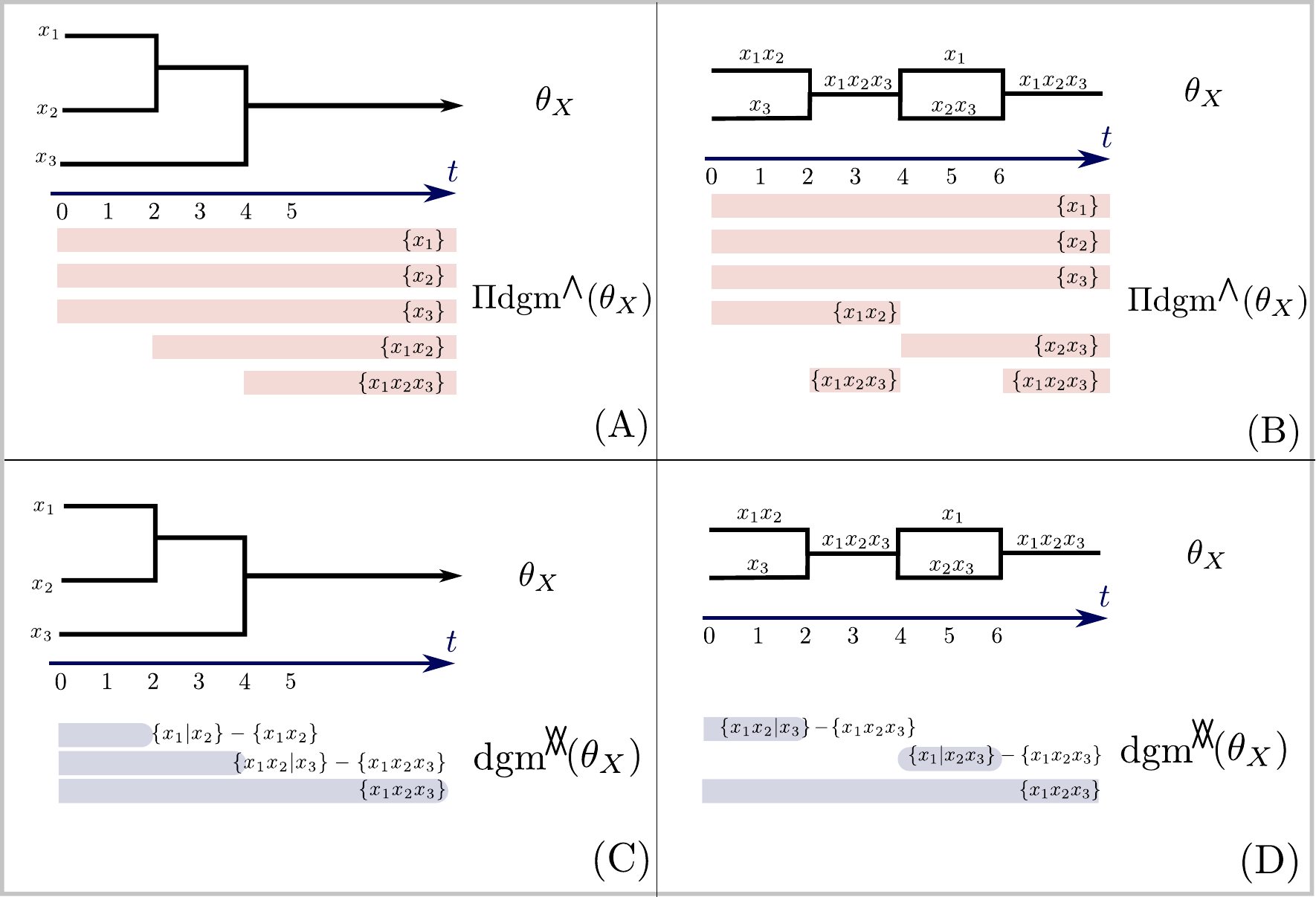}
    \caption{(A) and (B) depict maximal group diagrams (cf. Definition \ref{def:maximal group diagram}) while (C) and (D) depict persistence clustergrams (cf. Definition \ref{def:persistence clustergram}).} 
    \label{fig:maximal group diagrams and persistence clustergrams}
\end{figure}

\begin{figure}
    \centering
    \includegraphics[width=0.4\textwidth]{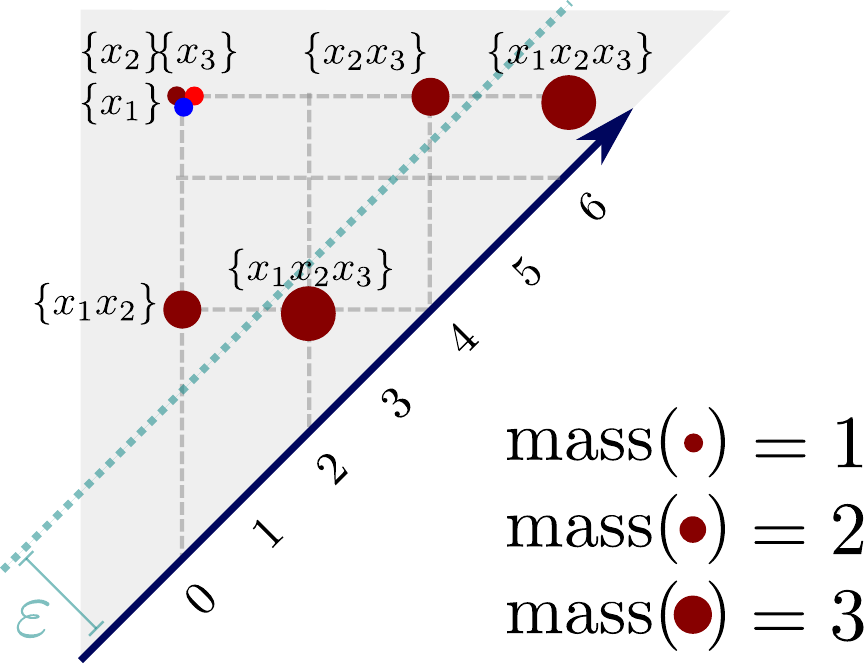}
    \caption{The maximal group diagram in Figure \ref{fig:maximal group diagrams and persistence clustergrams} (B) is represented as an annotated standard persistence diagram \cite{cohen2007stability} ($[a,b]\in \Int$ is identified with $(a,b)\in \R^\mathrm{op}\times \R$). In the 3-tuple $(m,\eps,\delta)$ of parameters for maximal groups \ok{(cf. Remark \ref{rem:maximal groups})}, $m$ corresponds to \ok{the minimal size of maximal groups (depicted as the ``mass" of points above)}, and $\eps$ corresponds to the minimal duration of maximal groups. By the monotonicity of maximal groups (cf. Remark \ref{rem:max diagram is complete}), the masses of points near the diagonal line $y=x$ tend to be relatively large.}
    \label{fig:three parameters for maximal group diagrams}
\end{figure}

\begin{theorem}\label{thm:maximal groups via Mobius inversion} Let $\theta_X$ be a formigram. A subset $G\subset X$ is a maximal group of $\theta_X$ on an interval $I\subset\R$  if and only if $(I,G)\in \pidgm(\theta_X)$. 
\end{theorem}

\begin{proof} Fix \emph{any} $G\in \bigwedge_I \theta_X$. Since $\bigwedge_{I^+} \theta_X \leq \bigwedge_I \theta_X$, either (a) $G\in \bigwedge_{I^+} \theta_X$ or (b) there exists the  subcollection $\{H_j\in \bigwedge_{I^+} \theta_X: H_j \subset G\}$ of $\bigwedge_{I^+} \theta_X$ which does not include $G$. Similarly, since $\bigwedge_{I^-} \theta_X \leq \bigwedge_I \theta_X$, there are the two analogous cases: (a') $G\in \bigwedge_{I^-} \theta_X$, or (b') there exists the subcollection $\{K_j\in \bigwedge_I\theta_X: K_j\subset G\}$ which does not include $G$. In combination, there are the four possible cases: (aa'), (ab'), (ba'), and (bb'). Among these, observe that $G$ appears in the subpartition $\pidgm(\theta_X)(I)$ only in  case (bb'), i.e. when $G$ is a maximal group over $I$. In the other three cases, $\pidgm(\theta_X)(I)$ does not contain $G$ nor any of its subsets. The claim follows.
\end{proof}

\begin{remark}\label{rem:maximal groups}
\ok{Let $X$ be a set of trajectories in  Euclidean space.}
 In \cite{buchin2013trajectory}, a \emph{group} of  $X$ is defined according to three different parameters. For any positive $m\in \Z$ and $\eps,\delta \in \R_+$,  a set $G\subset X$ is said to form an \emph{$(m,\eps,\delta)$-group} during a time interval $I$ if and only if (1) $G$ contains at least $m$ points, (2) the length of $I$ is not less than $\eps$ and (3) for any two points $x,x'\in G$ and any time $t\in I$, there is a chain $x=x_0,x_1,\ldots,x_n=x'$ of points in $X$ such that any two consecutive ones are at \hbox{distance $\leq\delta$} at time $t$. Note that (1) \emph{groups are totally determined by the formigram $\theta_X$ induced by the $\delta$-connectivity dynamic graph on $X$}, and that (2) for any positive $m\in \Z$ and any $\eps\in\R_+$ the collection of $(m,\eps,\delta)$-groups is the subcollection of $(1,0,\delta)$-groups. Therefore, we restrict our attention to maximal $(1,0,\delta)$-groups which are exactly the maximal groups of $\theta_X$ in Definition \ref{def:maximal groups}.
\end{remark}

\begin{example}\label{ex:maximal group}
\ok{Assume that the formigram $\theta_X$ in Figure \ref{fig:maximal group diagrams and persistence clustergrams} (B) is obtained from some three trajectories $x_1,x_2$ and $x_3$ in Euclidean space as described in Remark \ref{rem:maximal groups} (cf. Example \ref{ex:special DGs}  and Definition \ref{def:from DG to formi}). Figure \ref{fig:three parameters for maximal group diagrams} is another representation of the maximal group diagram of $\theta_X$.}
\end{example}

Consider any $\R$-indexed module $M$ (cf. Section \ref{sec:intervaldecomp}). Then, for any $t\in \R$, the dimension of $M(t)$ equals the total multiplicity of those intervals $I$ in the barcode of $M$ which contain $t$. We will establish an analogous result for $\theta_X:\R\rightarrow \subpart(X)$. Let us observe that for any nonempty $B\in \pow(X)$, we have that $\{B\}\in \subpart(X)$.

\begin{remark}[Monotonicity of maximal groups and completeness]\label{rem:max diagram is complete}  By Theorem \ref{thm:maximal groups via Mobius inversion} and the definition of maximal groups, $\pidgm(\theta_X)$ satisfies a \emph{monotonicity property}: if $B\subset B'$ with $(I,B),(I',B')\in \pidgm(\theta_X)$, then $I\supset I'$. This in turn implies that, for $t\in \R$, $\theta_X(t)$ is equal to 
\[\bigvee\{\{B\}\in \pow(X)\setminus\{\emptyset\}:\mbox{there exists $I\ni t$ such that $(I,\{B\})\in \pidgm(\theta_X)$}\}.\]Therefore, we can recover a formigram $\theta_X$ from its maximal group diagram $\pidgm(\theta_X)$.
\end{remark}

\begin{example}Consider the formigram depicted in Figure \ref{fig:maximal group diagrams and persistence clustergrams} (B). Note that
$\theta_X(5)=\{x_1|x_2x_3\}$ and the blocks corresponding to the four intervals containing $t=5$ are $\{x_1\},\{x_2\},\{x_3\}$ and $\{x_2x_3\}$. We have that  
\[\theta_X(5)=\{x_1|x_2x_3\}=\bigvee\{\{x_1\},\{x_2\},\{x_3\},\{x_2x_3\}\}.\]
\end{example}

Let us characterize the silhouette of the maximal group diagram of a dendrogram (cf. Remark \ref{rem:about the definition of formigrams}).

\begin{example}\label{ex:dendrogram} Let $\theta_X$ be a dendrogram, and let $G\subset X$. If there exists $t\in \R$ such that $G\in \theta_X(t)$, then define $b(G):=\min\{t\in \R: G\subset \theta_X(t)\}$. Then, the silhouette of 
$\pidgm(\theta_X)$ amounts to the set \[\{[b(G),\infty): \mbox{there exists $t\in \R$ such that $G\in \theta_X(t)$}\}\  \mbox{(cf. Notation \ref{not:set notation}}).\]
 For example, consider the dendrogram $\theta_X$ over $X:=\{x_1,x_2,x_3\}$ that is depicted in Figure \ref{fig:maximal group diagrams and persistence clustergrams} (A). Since 
 \[\bigcup_{t\in \R}\theta_X(t)=\{\{x_1\}, \{x_2\}, \{x_3\}, \{x_1,x_2\}, \{x_1,x_2,x_3\}\},\]
the silhouette of $\pidgm(\theta_X)$ consists of the five half-infinite intervals whose left endpoints are $b(\{x_1\})$, $b(\{x_2\})$, $b(\{x_3\})$, $b(\{x_1,x_2\})$, and $b(\{x_1,x_2,x_3\})$.
\end{example}

Remark \ref{rem:instability of maximal groups} below motivates us to consider a different summary of a formigram, leading to the next section. 

\begin{remark}\label{rem:instability of maximal groups} 
\begin{enumerate}[label=(\roman*)]
    \item 
The maximal group diagram can drastically change under perturbations of an input formigram \emph{with respect to $\dintf$} (for example, see Figure \ref{fig:instability of maximal groups}). This is not unexpected since the distance $\dintf$ utilizes the join operation $\bigvee$ on subpartitions to quantify the difference between formigrams whereas the maximal group diagram is extracted from the $\bigwedge$-rank function (Definition \ref{def:wedge rank function}). 

    \item The silhouette of the maximal group diagram almost always looks completely different from the barcode of $\theta_X$ (Definition \ref{def:underlying} \ref{item:zigzag barcode of formigram}); \ok{for example, observe that the silhouettes of $\Pi\dgm^\wedge(\theta_X)$ in Figure \ref{fig:maximal group diagrams and persistence clustergrams} (A),(B) are completely different from the silhouettes of $\dgm^{\veewedge}(\theta_X)$ in Figure \ref{fig:maximal group diagrams and persistence clustergrams} (C),(D). The latter silhouettes are actually the zigzag barcodes of the respective formigrams, as it will turn out in the next section (cf. Theorem \ref{thm:persistence clustergram of a saturated formigram}).}
\end{enumerate}
\end{remark}

\begin{figure}
    \centering
    \includegraphics[width=0.9\textwidth]{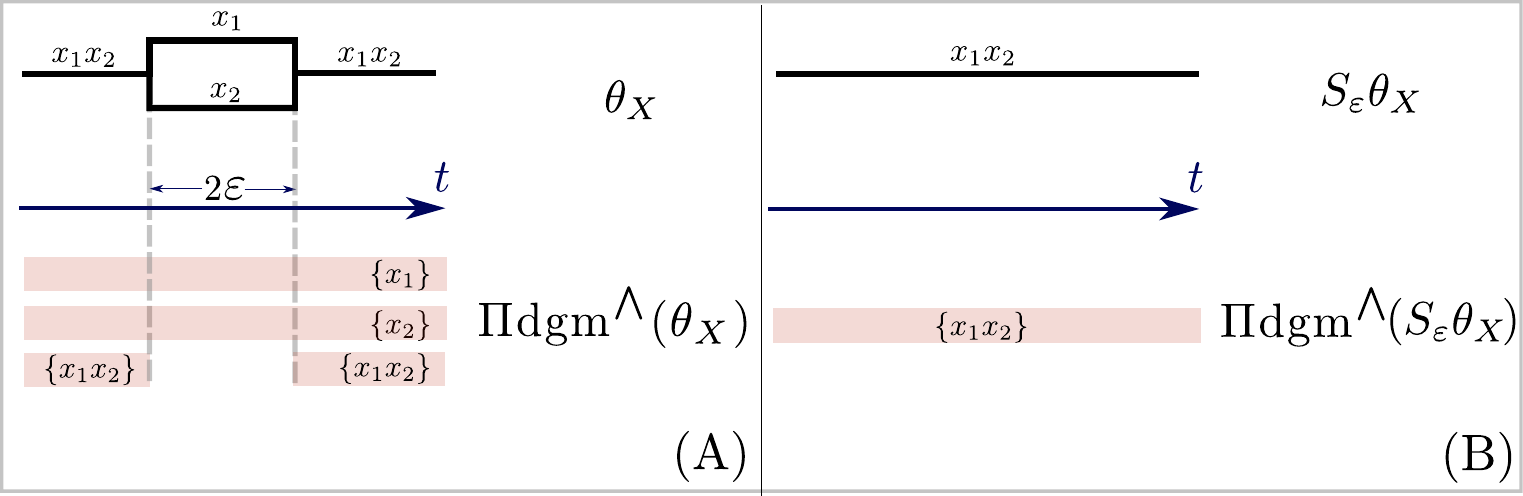}
    \caption{(A) The formigram $\theta_X$ over $X:=\{x_1,x_2\}$ such that $\theta_X(t)=\{x_1|x_2\}$ for $t\in (0,2\eps)$ and $\theta_X(t)=\{x_1x_2\}$ for $t\in \R\setminus(0,\eps)$. (B) The $\eps$-smoothing of $\theta_X$ (Definition \ref{def:cosheaf smoothing}). The bottleneck distance between the silhouettes of $\pidgm(\theta_X)$ and $\pidgm(S_\eps\theta_X)$ is $\infty$, whereas $\dintf(\theta_X,S_\eps\theta_X)=\eps$.} 
    \label{fig:instability of maximal groups}
\end{figure}

\subsection{Persistence clustergrams}\label{sec:persistence clustergrams}

In this section we define the persistence clustergram of a formigram. When a given formigram $\theta_X$ is saturated, its persistence clustergram can be regarded as an ``annotated" zigzag barcode of $\theta_X$
(cf. Figure \ref{fig:entire picture} (C)). We begin by defining the $\veewedge$-rank function of a formigram ($\veewedge$ is a fusion of $\bigwedge$ and $\bigvee$, as will be clear below). The M\"obius inversion of the $\veewedge$-rank function of $\theta_X$ will be the persistence clustergram of $\theta_X$.

Let $\theta_X$ be a formigram. For $I\in \Int$, we define $\veewedge_I \theta_X$ as the partition of the underlying set $X'(\subset X)$ of $\bigwedge_I\theta_X$ that is obtained by restricting the equivalence relation induced by $\bigvee_I\theta_X$ to $X'$. In other words,
\begin{equation}\label{eq:coimage}
    \veewedge_I \theta_X=\left\{B\cap X':B\in \bigvee_I\theta_X \ \mbox{and } B\cap X'\neq \emptyset\right\}.
\end{equation}

Note that when $\theta_X$ is saturated, $\veewedge_I\theta_X=\bigvee_I\theta_X$ for any interval $I\subset \supp(\theta_X)$.

\begin{definition}\label{def:veewedge rank function}Let $\theta_X$ be a formigram. The \emph{$\veewedge$-rank function} $\veewedge\theta_X:\Int(\ZZ)\rightarrow \subpart(X)$ of $\theta_X$ is defined by $I\mapsto \veewedge_I\theta_X$. 
\end{definition}

\begin{remark}
    \ok{In the category $\Part$ (cf. Definition \ref{def:partition category}), the pair $(\bigcup\left(\veewedge_I \theta_X\right),\veewedge_I \theta_X)$ is the \emph{coimage} of the morphism $(\bigcup\left(\bigwedge_I\theta_X\right),\bigwedge_I\theta_X)\rightarrow (\bigcup(\bigvee_I \theta_X) ,\bigvee_I \theta_X)$ which is given by the inclusion $\bigcup\left(\bigwedge_I\theta_X\right)\hookrightarrow \bigcup(\bigvee_I \theta_X)$ (cf. Proposition \ref{prop:image and coimage}). In this respect, the $\veewedge$-rank function is a rendition of the \emph{generalized rank invariant} from \cite{kim2018generalized}. Table \ref{tab:comparison} clarifies the analogy. } 
\end{remark}

\begin{table}
\begin{center}
\begin{tabular}{|c|p{4.5cm}|c|c|c|c|}
\hline \rowcolor[HTML]{C0C0C0} 
& \cellcolor[HTML]{C0C0C0} Nature of  \mbox{$F:\Pb\rightarrow \C$}  &    $\Pb$ &     $\C$ &      $\mathrm{rk}(F)(I)$   &   Representation of $\mathrm{rk}(F)(I)$                      \\ 
\hline \hline
(\rNum{1})&Sublevelset persistence \cite{cohen2007stability}               & $\R$           & \multirow{2}{*}{$\vect$}         & \multirow{6}{*}{$\coim\left(\varprojlim F|_I \rightarrow \varinjlim F|_I\right)$} & \multirow{2}{*}{An integer (dimension)}
\\ \cline{1-3}
(\rNum{2})&Levelset persistence \cite{zigzag} & $\Int$         &         &  & 
\\ \cline{1-4} \cline{6-6}
(\rNum{3})&Merge tree \cite{morozov2013interleaving}                           & $\R$         &  \multirow{2}{*}{$\sets$}         &  & \multirow{2}{*}{An integer (cardinality)}
\\  \cline{1-3}
(\rNum{4})&Reeb graph \cite{de2016categorified}                     & $\Int$           &          &  & 
\\   \cline{1-4} \cline{6-6}
(\rNum{5})&Unlabeled dendrogram \cite{clustum}      & $\R$         &  \multirow{2}{*}{$\Part$}         & &  \multirow{2}{*}{An integer partition}\\  \cline{1-3}
(\rNum{6})&Unlabeled formigram                        & $\Int$           &        & & 
\\ \hline
\end{tabular}

\caption{$\protect\mathrm{rk}(F)(I)$ denotes the \emph{rank} of the functor $F$ over an interval $I\subset\R$ up to isomorphism {\cite[Definition 3.5]{kim2018generalized}}. In each of rows (\rNum{1})-(\rNum{4}), the \emph{coimage} of $\protect\varprojlim F|_I \rightarrow \protect\varinjlim F|_I$ can be replaced by the \emph{image} of $\protect\varprojlim F|_I \rightarrow \protect\varinjlim F|_I$, since they are isomorphic. However, it is not the case in row (\rNum{5}) nor in row (\rNum{6}); see Proposition \ref{prop:image and coimage}.}\label{tab:comparison}
\end{center}
\end{table}

\begin{example}Consider the formigram depicted in Figure \ref{fig:weak_vs_strong} (A). For $I:=[0,5]$, we have $\bigwedge_I \theta_X=\{x_1|x_2|x_3\}$, $\bigvee_I \theta_X=\{x_1x_2|x_3x_4\}$, and $\veewedge_I \theta_X=\{x_1x_2|x_3\}$. Note that whereas the $\bigwedge$-rank function classifies $x_1$ and $x_3$ into different clusters over $I$, the $\veewedge$-rank function puts $x_1$ and $x_3$ into the same cluster over $I$. The $\veewedge$-rank function $\veewedge \theta_X$ is completely depicted in Figure  \ref{fig:weak_vs_strong} (B).
\end{example}

\begin{definition}\label{def:persistence clustergram}
Let $\theta_X$ be a formigram. The \emph{persistence clustergram} $\dgm^{\veewedge}(\theta_X):\Int(\ZZ)\rightarrow \fpow(X)$ is defined as the M\"obius inversion of $\veewedge\theta_X$ over the poset $\Intzzop$. Namely, for all $I\in \Int(\ZZ)$,
\begin{equation}\label{eq:verbose Mobius inversion} \dgm^{\veewedge}(\theta_X)(I):=\veewedge_I\theta_X-\veewedge_{I^+}\theta_X-\veewedge_{I^-}\theta_X+\veewedge_{I^{\pm}}\theta_X.
\end{equation}
\end{definition}

By the following remark, $\dgm^{\veewedge}$ is a complete invariant of formigrams.

\begin{remark}\label{rem:persistence clustergram is complete}
By Theorem \ref{thm:Mobius}, for any $I\in \Int(\ZZ)$, we have that $\veewedge_I\theta_X=\sum_{J\supset I}\dgm^{\veewedge}(\theta_X)(J)$. In particular, for any $t\in \R$, $\theta_X(t)=\veewedge_{[t,t]}\theta_X=\sum_{J\ni t}\dgm^{\veewedge}(\theta_X)(J)$. Thus $\theta_X$ can be recovered from $\dgm^{\veewedge}(\theta_X)$.
\end{remark}

Whereas the $\Hrm_0$ barcode or the underlying merge tree is not a complete invariant of a dendrogram, the persistence clustergram of a dendrogram is a complete invariant.

\begin{example}[Persistence clustergram of dendrograms] Consider the dendrogram $\theta_X$ over $X:=\{x_1,x_2,x_3\}$ which is depicted as in Figure \ref{fig:maximal group diagrams and persistence clustergrams} (C). Then, $\dgm^{\veewedge}(\theta_X)$ consists of the three elements (cf. Notation \ref{not:set notation}) 
\begin{equation}\label{eq:pairs}
    ([0,2),\{x_1|x_2\}-\{x_1x_2\}),\  ([0,4),\{x_1x_2|x_3\}-\{x_1x_2x_3\}),\  ([0,\infty),\{x_1x_2x_3\}),
\end{equation}
as illustrated in the figure. Invoking Remark \ref{rem:persistence clustergram is complete}, let us compute $\theta_X(1)=\sum_{I\ni 1}\dgm^{\veewedge}(\theta_X)(I)$. Since the silhouette of $\dgm^{\veewedge}(\theta_X)$ is $\{[0,2),[0,4),[0,\infty)\}$, and $1\in \R$ belongs to all of $[0,2),[0,4)$ and $[0,\infty)$ we have:
\begin{align*}
    \theta_X(1)&=\dgm^{\veewedge}(\theta_X)[0,2)+\dgm^{\veewedge}(\theta_X)[0,4)+\dgm^{\veewedge}(\theta_X)[0,\infty)\\&=\left(\{x_1|x_2\}-\{x_1x_2\}\right) +\left(\{x_1x_2|x_3\}-\{x_1x_2x_3\}\right)+\{x_1x_2x_3\}\\&=\{x_1|x_2|x_3\}.
\end{align*}
Also, the silhouette $\{[0,2),[0,4),[0,\infty)\}$ of $\dgm^{\veewedge}(\theta_X)$ coincides with the zigzag barcode of $\theta_X$. This is not merely a coincidence, as we will see in the following theorem.

\end{example}

\begin{theorem}\label{thm:persistence clustergram of a saturated formigram}If $\theta_X$ is a \emph{saturated} formigram (Definition \ref{def:special cases2}), then the barcode of $\theta_X$ is equal to the silhouette $\abs{\dgm^{\veewedge}(\theta_X)}$ of $\dgm^{\veewedge}(\theta_X)$ (cf. Figures \ref{fig:entire picture} (F),(G),(H) and \ref{fig:maximal group diagrams and persistence clustergrams} (C),(D)). 
\end{theorem}

We prove this theorem in Appendix \ref{sec:persistence clustergram of a saturated formigram} by harnessing results from \cite{kim2018generalized}.

\begin{corollary}
If $\theta_X$ is a \emph{saturated} formigram, the silhouette $\abs{\dgm^{\veewedge}(\theta_X)}:\Intzz\rightarrow \Z$ is nonnegative.
\end{corollary}

\begin{example}
When a formigram is not saturated, the silhouette of its persistence clustergram is not necessarily the same as the barcode of $\theta_X$. See Figure \ref{fig:weak_vs_strong} (C) for an illustrative example.
\end{example}

Since $\dgm^{\veewedge}$ is a complete invariant of formigrams, the distance $\dintf$ between formigrams can also be viewed as a distance between their persistence clustergrams. Since computing $\dintf$ is NP-hard, it is desirable to find many computable lower bounds for $\dintf$. One such lower bound is the bottleneck distance between the barcodes of formigrams (cf. Corollary \ref{cor:df-db stability}). One additional lower bound based on persistence clustergrams is proposed in the next section.

\subsection{Persistent cluster counting functor}\label{sec:persistent cluster counting}

We define the \emph{persistent cluster counting functor} of a formigram $\theta_X$ as the silhouette of $\veewedge\theta_X$.  We will see later that persistent cluster counting functors of formigrams can be used for discriminating formigrams which cannot be discriminated by the underlying Reeb graphs of formigrams. Throughout this section, $X$ will denote nonempty finite sets. \ok{Convention \ref{conv:real intervals and zz intervals} still applies in this section.} 

For $P\in \subpart(X)$, $\abs{P}$ denotes the number of blocks in $P$. By viewing $P$ as the formal sum $\sum_{B\in P}1\cdot B \in \fpow(X)$, this notation is consistent with the notation $\abs{-}$ which is defined in Remark \ref{rem:group homomorphism}.

\begin{definition}
\label{def:persistent cluster counting functor} The \emph{persistent cluster counting functor} of a formigram $\theta_X$ is the map $\betti:\Intzz\rightarrow \Z_+$ defined by $I\mapsto \abs{\veewedge_{I} \theta_X}$.
\end{definition}

For $t\in \R$,  $\betti(t,t)=\left|\theta_X(t)\right|,$ the number of the blocks in $\theta_X(t)$. The value $\betti(I)$ in Definition \ref{def:persistent cluster counting functor} is the number of \emph{independent} groups over $I$ (cf. Definition \ref{def:group}); \ok{Two groups $G_1$ and $G_2$ over an interval $I$ are called independent if there is no pair of $x_1\in G_1$ and $x_2\in G_2$ such that $x_1$ and $x_2$ belong to the same block of $\theta_X(t)$ for some $t\in I$.  The independence of $G_1$ and $G_2$ is stronger condition than $G_1\cap G_2=\emptyset$. If $G_1$ and $G_2$ are not independent, then they are called \emph{dependent}. For the equivalence relation generated by dependence, the value $\betti(I)=\veewedge_I \theta_X$ is the number of equivalence classes of groups over $I$. Note that $\betti$ is analogous to the rank invariant of $\R$-indexed and $\ZZ$-indexed modules  \cite{carlsson2009theory,cohen2007stability,kim2018generalized}.} 

\ok{We have that \begin{equation}\label{eq:persistent cluster counting Mobius inversion}
    \betti(I)=\sum_{J\supset I} \abs{\dgm^{\veewedge}(\theta_X)}(J)
\end{equation} by Theorem \ref{thm:Mobius} and Proposition \ref{prop:silhouette commutes with mobius}. In Appendix \ref{sec:from weigted reeb to persistent cluster counting} we also prove that $\abs{\veewedge \theta_X}$ can be obtained from the unlabeled formigram of $\theta_X$ (cf. Definition \ref{def:underlying} \ref{item:unlabeled formigram}); see Proposition \ref{prop:from weigted reeb to persistent cluster counting}. This establishes the arrow (C)$\rightarrow$(H) in Figure \ref{fig:entire picture}, invoking that $\betti$ and  $\abs{\dgm^{\veewedge}(\theta_X)}$ can recover each other.} 
\subsection{Stability of the persistent cluster counting functor}\label{sec:stability of persistent cluster counting functor}

 \ok{Convention \ref{conv:real intervals and zz intervals} still applies in this section.}  \ok{For any two intervals $I\subset J$, the number of independent maximal groups over $I$ is greater than equal to that of $J$, i.e. $\abs{\veewedge_J \theta_X}\leq \abs{\veewedge_I \theta_X}$. }
 This monotonicity allows us to quantify the difference between two persistent cluster counting functors via the so-called \emph{erosion distance}.

\begin{definition}[{\cite{patel2018generalized,puuska2017erosion}}]\label{def:erosion}Let $Y_1,Y_2:\U \rightarrow \Z_+$ be any two  order-reversing maps. The \emph{erosion distance} between $Y_1$ and $Y_2$ is defined as 
\[\dero(Y_1,Y_2):=\inf\left\{\eps\in[0,\infty): \mbox{for all $I\in \Int$, } Y_i(I)\leq Y_j(I^\eps),\ \mbox{for}\ i,j\in\{1,2\}\right\}.\] 
\end{definition}

Throughout this section, $X$ and $Y$ will denote nonempty finite sets. Persistent cluster counting functor enjoys stability:

\begin{theorem}\label{thm:stability of betti}For any two formigrams $\theta_X$ and $\theta_Y$, we have
	\[\dero\left(\betti, \bettiy \right)\leq \dintf(\theta_X,\theta_Y).\] 
\end{theorem}

We prove this theorem at the end of this section. Invoking Proposition \ref{prop:stability from dg to formi}, this theorem implies that DGs can be \ok{summarized} into persistent cluster counting functors with a guarantee of stability. Example \ref{ex:last} below shows that  persistent cluster counting functors can sometimes be more discriminative than the  Reeb graph of formigrams (cf. Definition \ref{def:underlying}). 

\begin{example}\label{ex:last}Consider the two DGs $\dyngx$ and $\dyngy$ over $X:=\{x_1,x_2\}$ and $Y:=\{y_1\}$ respectively given as follows (cf. Figure \ref{fig:summarizing} (A)):
\[V_X(t)\cup E_X(t)=\begin{cases}
\{\{x_1\}\},& t\in [0,1)\cup(2,3]
\\
\{\{x_1\},\{x_2\},\{x_1,x_2\}\},& t\in [1,2]\\
\emptyset,& \mbox{otherwise,}
\end{cases}
\hspace{8mm} V_Y(t)\cup E_Y(t)=\begin{cases}
\{\{y_1\}\},& t\in [0,3]
\\
\emptyset,& \mbox{otherwise.}
\end{cases}
\]

Let $\theta_X$ and $\theta_Y$ be the formigrams of $\dynG_X$ and $\dynG_Y$, respectively (cf. Definition \ref{def:from DG to formi} and Figure \ref{fig:summarizing} (B)).  
Then $\betti,\bettiy:\U\rightarrow \Z_+$ are described in  Figure \ref{fig:summarizing} (C). By Definition \ref{def:erosion}, we have 
$\dero\left(\betti,\bettiy\right)=1$  (More details are provided below). Note that these two DGs cannot be discriminated by computing their Reeb graphs nor their zigzag barcodes.
\end{example}

\begin{remark}\label{rem:DG and formi distance computation} 
\begin{enumerate}[label=(\roman*)]
    \item The inequalities in Theorems \ref{prop:stability from dg to formi}  and \ref{thm:stability of betti} are \emph{tight}. For the DGs $\dynG_X,\dynG_Y$ and formigrams $\theta_X,\theta_Y$ in Example \ref{ex:last}, we have that $\dintg(\dynG_X,\dynG_Y)=\dintf(\theta_X,\theta_Y)=1$. Indeed, $X \xtwoheadleftarrow{\pi_1} \{(x_1,y_1),(x_2,y_1)\} \xtwoheadrightarrow{\pi_2} Y$ (where $\pi_1$ and $\pi_2$ are the canonical projections) is a 1-tripod between $\dynG_X$ and $\dynG_Y$, and also between $\theta_X$ and $\theta_Y$.  \label{item:DG and formi distance computation1}
    \item Assume that $\theta_X,\theta_Y$ are two formigrams with the same $n$ critical points. Once $\betti$ and $\bettiy$ are computed, computing $\dero(\betti,\bettiy)$  via \emph{ordinary binary search} requires the (expected) cost $O(n^2\log n)$, see \cite[Section 5]{kim2020spatiotemporal}. \label{item:DG and formi distance computation2}
    \end{enumerate}
\end{remark}

\begin{figure}
    \centering
    \includegraphics[width=\textwidth]{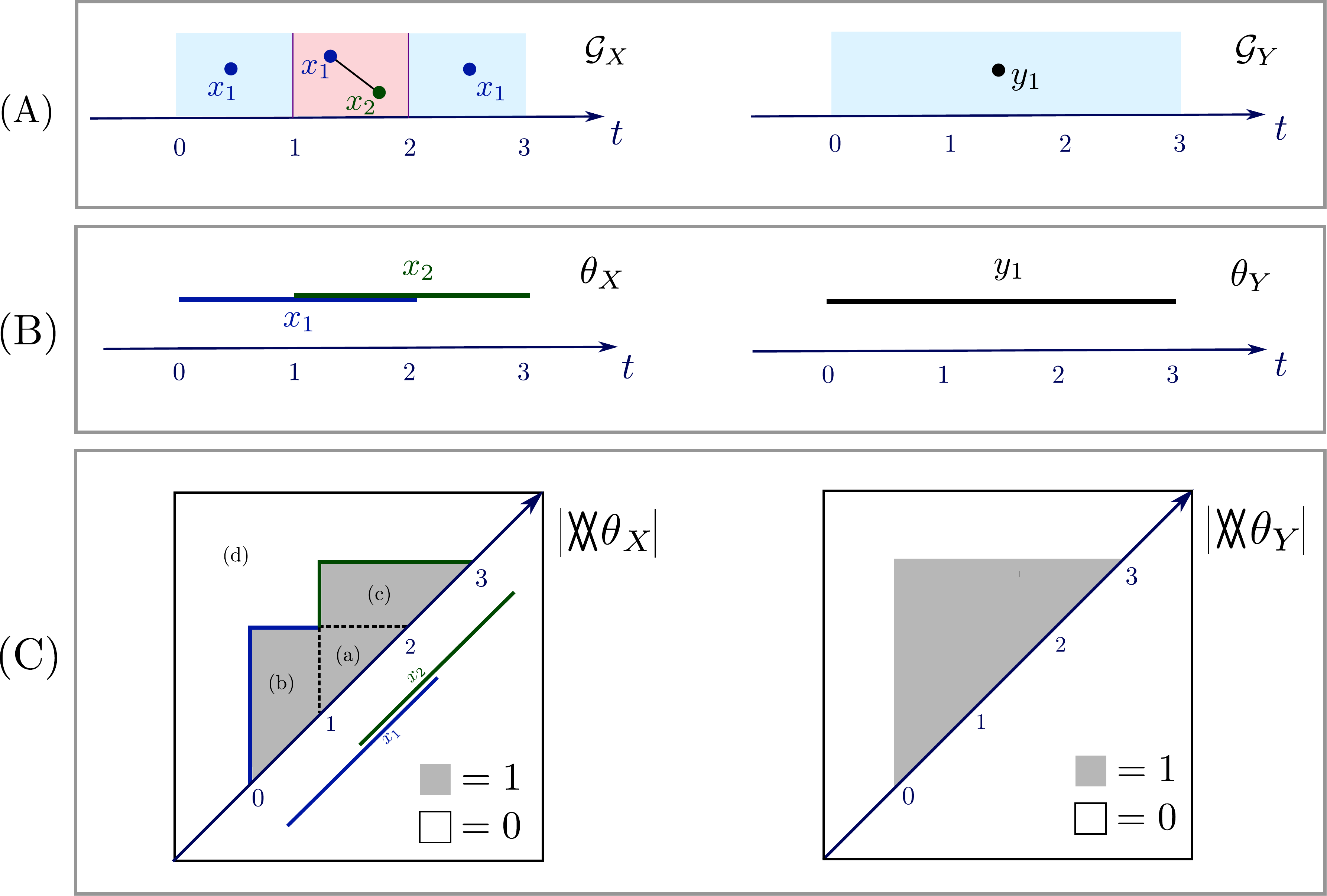}
    \caption{The summarization process of the two DGs $\dynG_X$ and $\dynG_Y$ from Example \ref{ex:last}.}
    \label{fig:summarizing}
\end{figure}

\begin{proof}[Details for Example \ref{ex:last}]\label{ex:betti0}\label{proof:last}Let us compute the formigram $\theta_X$ induced from $\dynG_X$ (cf. Definition \ref{def:from DG to formi}):
\begin{align*}
\theta_X(t)=\begin{cases}
\{\{x_1\} \},& t\in [0,1)\\
\{\{x_1,x_2\}\}, & t\in [1,2]\\
\{\{x_2\}\},& t\in (2,3]\\
\emptyset,& t\in \R\setminus [1,3]
\end{cases}\hspace{5mm}\mbox{(cf. Figure \ref{fig:summarizing} (B)).}
\end{align*}

Next we compute $\betti:\U\rightarrow\Z_+$ (cf. Figure \ref{fig:summarizing} (C)). Note that 
	\begin{equation*}
		\bigwedge_{[s,t]}\theta_X=
		\begin{cases}\begin{aligned}		
		\{\{x_1,x_2\}\},&&\mbox{$1\leq s\leq t\leq 2$,}&& \mbox{Region (a)}\\
		\{\{x_1\}\},&&\mbox{$s\in [0,1)$ and $s\leq t\leq 2$,} &&\mbox{Region (b)}\\
		\{\{x_2\}\},&&\mbox{$t\in (2,3]$ and $1< s\leq t$,}&& \mbox{Region (c)}\\
		\emptyset,&&\mbox{otherwise,}&& \mbox{Region (d),}\\	
		\end{aligned}\end{cases}
	\end{equation*}
	where regions (a),(b),(c), and (d) are marked in Figure \ref{fig:summarizing} (C). Notice that the cardinality of $\bigwedge_{[s,t]}\theta_X$ is $1$ for $[s,t]$ in regions (a),(b) and (c). Thus, we directly have that $\betti[s,t]=1$ for $[s,t]\in\U$ in these regions. Also, since $\bigwedge_{[s,t]}\theta_X$ is empty for $[s,t]$ in region (d), we also have that $\betti[s,t]=0$ for $[s,t]$ in region (d). Similarly, one can compute $\theta_Y:\R\rightarrow \subpart(Y)$ and $\bettiy:\U\rightarrow \Z_+$ as
	\[\theta_Y(t)=\begin{cases} \{y_1\},& t\in[0,3] \\ \emptyset,&\mbox{otherwise,}
	\end{cases}\hspace{3mm}	\mbox{and}\hspace{3mm} \bettiy[s,t]=\begin{cases} 1,& 0\leq s\leq t\leq 3 \\ 0,&\mbox{otherwise.}
	\end{cases}\hspace{3mm}\mbox{(see Figure \ref{fig:summarizing} (B),(C))}\]
	Observe from Figure \ref{fig:summarizing} (C) that $\dero(\betti,\bettiy)=1$. 
\end{proof}

\begin{remark}\label{rem:tree of distances}In general, we have the following ``poset of distances":
\begin{center}
\begin{tikzpicture}
    \node (top) at (0,0) {$\dintf(\theta_X,\theta_Y)$};
    \node (middle left top) at (-3,-1) {$\dintwreeb(\omega(\theta_X),\omega(\theta_Y))$};
    \node (middle left) at (-3,-2)  { $\dintreeb(\reeb(\theta_X),\reeb(\theta_Y))$};
    \node (middle right) at (3,-2) {$\dero(\betti,\bettiy)$};
    \node (bottom left) at (-3,-3) {$\bott (\barc(\theta_X),\barc(\theta_Y))$};
    \draw [thick, shorten <=-2pt, shorten >=-2pt] (top) -- (middle left top);
    \draw [thick, shorten <=-2pt, shorten >=-2pt] (middle left top) -- (middle left);
    \draw [thick, shorten <=-2pt, shorten >=-2pt] (top) -- (middle right);
     \draw [thick, shorten <=-2pt, shorten >=-2pt] (middle left) -- (bottom left);
\end{tikzpicture}
\end{center}
where the distance $\dintwreeb$ between weighted Reeb graphs is defined in Appendix \ref{sec:distance between weighted reeb graphs}.
When $\theta_X$ and $\theta_Y$ are \emph{saturated formigrams}, we have the following totally ordered hierarchy: \[\dintf(\theta_X,\theta_Y)\geq \dintwreeb(\omega(\theta_X),\omega(\theta_Y))\geq \dintreeb(\reeb(\theta_X),\reeb(\theta_Y)) \geq  \bott (\barc(\theta_X),\barc(\theta_Y))\geq \dero(\betti,\bettiy).
\]
The last inequality follows from Theorem \ref{thm:persistence clustergram of a saturated formigram} and the well-known inequality $\bott\geq \dero$ for (standard) persistence diagrams \cite{mccleary2018bottleneck,patel2018generalized}.
\end{remark}

\begin{proof}[Proof of Theorem \ref{thm:stability of betti}]\label{proof:stability of betti} If $\dintf(\theta_X,\theta_Y)=+\infty$, there is nothing to prove. For some $\eps\in \R_+$, assume that $\tripod$ is an $\eps$-tripod between $\theta_X$ and $\theta_X$ (cf. Definition \ref{def:interleaving distance2}). Fix $I\in \U$ and we will only prove that $\betti(I^\eps)\leq \bettiy(I).$ If $\bigwedge_{I^\eps}\theta_X$ is empty, then $\betti(I^\eps)=0$ by definition and hence there is nothing to prove. Suppose that $\bigwedge_{I^\eps}\theta_X$ is nonempty. We show that there are two set maps $\psi_1:\bigwedge_{I^\eps}\theta_X\rightarrow \bigwedge_{I}\theta_Y$ and $\psi_2:\bigvee_{I}\theta_Y\rightarrow \bigvee_{I^\eps}\theta_X$ which make the following diagram commutes.
\begin{center}
	\begin{tikzcd}[column sep=huge]
		\bigwedge_{I^\eps}\theta_X\arrow{r}{\phi^X(I^\eps)}\arrow{d}[swap]{\psi_1}&\bigvee_{I^\eps}\theta_X\\ \bigwedge_{I}\theta_Y\arrow{r}{\phi^Y(I)}&\bigvee_{I}\theta_Y\arrow{u}[swap]{\psi_2}.
	\end{tikzcd}
\end{center}
Here the maps $\phi^X(I^\eps)$ and $\phi^Y(I)$ are the canonical maps in $\subpart(X)$ and $\subpart(Y)$, respectively (cf. Definition \ref{def:natural}). Indeed, if we have such maps $\psi_1$ and $\psi_2$, then we have:
\begin{align*}
\betti(I^\eps)=\abs{\im\left(\phi^X(I^\eps)\right)}\leq \abs{\im\left( \phi^Y(I)\right)}=\bettiy(I),
\end{align*}as desired.

Let us construct two such maps $\psi_1$ and $\psi_2$. First, define set maps $f:X\rightarrow Y$ and $g:Y\rightarrow X$ such that
\begin{equation}\label{eq:the map f}
	\{(x,f(x)):x\in X\}\cup \{(g(y),y):y\in Y\}\subset R,
\end{equation} 
which is possible since $\varphi_X$ and $\varphi_Y$ are surjective (cf. Notation \ref{not:belongs to a tripod}).
We now construct the map $\psi_1:\bigwedge_{I^\eps}\theta_X\rightarrow \bigwedge_{I}\theta_Y$. Let $B\in \bigwedge_{I^\eps}\theta_X$ and pick any $x\in B$. By definition of $\bigwedge_{I^\eps}\theta_X$, this implies that $x$ is in the underlying set of $\theta_X(t)$ for \emph{all} $t\in I^\eps$. 

We claim that the lifespan $I_{f(x)}$ of $f(x)\in Y$ in the formigram $\theta_Y$ contains the interval $I=:[s,t]$. Since $I_{f(x)}$ is an interval in $\R$ (cf. Definition \ref{def:formigram}), it suffices to prove that $I_{f(x)}$ contains some $\alpha\in (-\infty,s]$ and $\beta\in [t,\infty)$. Invoking that $R$ is an $\eps$-tripod between $\theta_X$ and $\theta_Y$, we have that
\begin{equation}\label{eq:R refinement}
    \theta_X(s-\eps)\leq_R\bigvee_{[s-\eps]^\eps}\theta_Y=\bigvee_{[s-2\eps,s]}\theta_Y.
\end{equation}
Also, since $x$ is in the underlying set of $\theta_X(s-\eps)$ and $(x,f(x))\in R$ (from inclusion (\ref{eq:the map f})), relation (\ref{eq:R refinement}) implies that the element $f(x)\in Y$ must be in the underlying set of $\bigvee_{[s-2\eps,s]}\theta_Y$ (cf. Remark \ref{prop:morphism for formi} \ref{item:morphism for formi1}). This implies that there exists $\alpha\in [s-2\eps,s]$ such that $y$ is in the underlying set of $\theta_Y(\alpha)$. Hence, $\alpha$ belongs to $I_{f(x)}$. Similarly, one can also check that there exists $\beta\in [t,t+2\eps]$ such that $\beta\in I_{f(x)}$. Therefore, we have that $I\subset I_{f(x)}$.

This inclusion implies that $\bigwedge_{I}\theta_Y$ contains a block $C$ which in turn contains $f(x)$. We define $\psi_1$ as the map sending $B$ to $C$. Note that the definition of $\psi_1$ depends on the choice of a representative element $x$ for each block $B$ in $\bigwedge_{I^\eps}\theta_X$. 

Next, by invoking that $R$ is an $\eps$-tripod between $\theta_X$ and $\theta_Y$, inclusion (\ref{eq:the map f}), and Remark \ref{prop:morphism for formi} \ref{item:morphism for formi2}, we define the map $\psi_2:\bigvee_{I}\theta_Y \rightarrow \bigvee_{I^\eps}\theta_X$ as follows: for each $C\in \bigvee_{I}\theta_Y$, let $\psi_2(C)$ be the unique block $B\in \bigvee_{I^\eps}\theta_X$ containing the image of $C$ via the map $g:Y\rightarrow X$. 

It remains to verify that $\phi^X(I^\eps)=\psi_2\circ \phi^Y(I)\circ \psi_1.$ Pick any block $B \in \bigwedge_{I^\eps}\theta_X$.  Since $\phi^X(I^\eps)$ is the canonical map in $\subpart(X)$, $B$ is sent to the unique block $B'\in \bigvee_{I^\eps}\theta_X$ containing the whole block $B$. 
Now, let $x\in B$ be the representative element which was used for defining $\psi_1$. Via $\psi_1$, the block $B$ is sent to the block $C\in \bigwedge_{I}\theta_Y$ containing $f(x)$. Then, the map $\phi^Y(I)$ sends $C$ to the unique block $C'\in \bigvee_{I}\theta_Y$ containing $f(x)$ (and the whole block $C$). From Remark \ref{prop:morphism for formi} \ref{item:morphism for formi2}, we have that $\bigvee_{I}\theta_Y\leq_{R}\bigvee_{I^\eps}\theta_X$. Thus, by inclusion (\ref{eq:the map f}), we conclude that $\psi_2$ sends $C'(\ni f(x))$ to the block $B' (\ni x)$, completing the proof. \end{proof}

	\section{Analysis of dynamic metric spaces (DMSs)}\label{sec:DMSs}

		\ok{Our work is motivated by the desire to construct a well-defined summarization tool of clustering behavior of time-varying metric data, which is modeled as \emph{dynamic metric spaces (DMSs).} In Section \ref{sec:dms definition}, we define DMSs. In Section \ref{sec:from DMSs to DGs}, we establish a sufficient condition for DMSs to be converted into DGs via the Rips graph functor (cf. Proposition \ref{prop:from dms to filt2}). This enables us to produce summaries of DMSs such as those which are illustrated in Figure \ref{fig:entire picture}.\footnote{In \cite[Section 5]{kim2020analysis}, DMSs generated by \emph{Boid} \cite{boids} were successfully classified by the bottleneck distance on their zigzag barcodes (cf. Figure \ref{fig:entire picture} (F)).} In Section \ref{sec:distance between DMSs}, we define the \emph{$\lambda$-slack interleaving distance} $\dintl$ between DMSs. Although this metric was utilized in \cite{kim2020spatiotemporal}, all properties of $\dintl$ mentioned therein are proved in this paper, including the fact that $\dintl$ is a metric. }

	\subsection{DMSs}\label{sec:dms definition}
	
	In this section we introduce definitions pertaining to our model for dynamic metric spaces (DMSs). In particular, \emph{tameness} (cf. Definition \ref{def:tameDMS}) is a crucial requirement on DMSs, which permits transforming DMSs into DGs via the Rips graph functor (cf. Proposition \ref{prop:from DMS to DG}), thus subsequently into formigrams, persistence clustergrams, Reeb graphs, and barcodes.

	\begin{definition}[\cite{kim2020spatiotemporal}]\label{dynamic} A \emph{dynamic metric space}  is a  pair $\gamma_X = (X,d_X(\cdot))$ where $X$ is a nonempty finite set and $d_X:\T\times X\times X\rightarrow \R_+$ satisfies:\begin{enumerate}[label=(\roman*),itemsep=-1ex]
			\item For every $t\in\T$, $\gamma_X(t)=(X,d_X(t))$ is a pseudo-metric space.\label{item:dynamic1}		
			\item For any $x, x'\in  X$ with $x\neq x'$ the function $d_X(\cdot)(x, x'): \T \rightarrow \R_+$ is not
identically zero.			\label{item:dynamic2}	
			\item For fixed $x,x'\in X$,	$d_X(\cdot)(x,x'):\T\rightarrow \R_+$  is continuous. \label{item:dynamic3}	
		\end{enumerate}
		We refer to $t$ as the \emph{time} parameter.
	\end{definition}
	
	We remark that a DMS $\gamma_X$ is not just a continuous curve in the Gromov-Hausdorff space \cite{chowdhury2016explicit}, but it also keeps track of the identities of points in $X$.  Item \ref{item:dynamic2} is assumed to avoid redundancy; otherwise there could be two points which are forever identified.
	
	\begin{example}\label{ex:dmss}                                                                        An  example is given by $n$ particles/animals  moving continuously inside an environment $\Omega\subset \R^d$ where particles are allowed to coalesce. 
			If the $n$ trajectories are $p_1(t),\ldots,p_n(t)\in \R^d$, then let $P:=\{1,\ldots,n\}$ and define a DMS $\gamma_P := (P,d_P(\cdot))$ as follows: for $t\in \T$ and $i,j\in\{1,\ldots,n\}$, let $d_P(t)(i,j):=\|p_i(t)-p_j(t)\|,$ where $\|\cdot\|$ denotes the Euclidean norm. Buchin et al. considered this setting \cite{buchin2013trajectory}.    
	\end{example}                                                           
	
	We now introduce a notion of \emph{equality} between two DMSs.                                                                                                                                                                      
	\begin{definition}\label{def:isomorphism} Let $\gamma_X = (X,d_X(\cdot))$ and $ \gamma_Y=(Y,d_Y(\cdot))$ be two DMSs. We say that $\gamma_X$ and $\gamma_Y$ are  \emph{isomorphic} if there exists a bijection $\varphi:X\rightarrow Y$ such that $\varphi$ is an isometry between $\gamma_X(t)$ and $\gamma_Y(t)$  across all $t\in \T$.
	\end{definition}

	\subsection{From DMSs to DGs}\label{sec:from DMSs to DGs}
	
	We introduce a notion of \emph{tame DMS} which will ultimately ensure that the Rips graph functor induces DGs satisfying our definition (cf. Definition \ref{def:dyn graphs} and Example \ref{ex:special DGs}). 
	A continuous function $f:\R\rightarrow\R$ is called \emph{tame}, if for any $c\in\R$ and any finite interval  $I\subset\R,$  the set $f^{-1}(c)\cap I\subset\R$ has only finitely many connected components (and is possibly empty). Elementary functions including polynomial functions (in particular, constant functions), piecewise linear functions with locally finitely many critical points are tame.
	
	\begin{definition}
	\label{def:tameDMS}
		A DMS $\gamma_X=(X,d_X(\cdot))$ is said to be \emph{tame} if for any $x, x'\in X$ the function $d_X(\cdot)(x,x'):\T\rightarrow \R_+$ is tame. 
	\end{definition}
	
For $\delta\geq 0$ and for any finite metric space $(X,d_X)$, let $\rips^1(X,d_X)$ be the 1-skeleton of the $\delta$-Rips complex of $X$, i.e. a simple graph over the vertex set $X$ with the edge set $E_X=\left\{\{x,x'\}\subset X: d_X(x,x')\leq \delta  \ \mbox{and} \ x\neq x'\right\}$.

    \begin{proposition}\label{prop:from dms to filt2}  Let $\gamma_X$ be a \emph{tame} DMS over $X$ and let $\delta\geq 0$. Then, by defining $\rips^1(\gamma_X)(t):=\rips^1(\gamma_X(t))$ for $t\in\T$, $\rips^1(\gamma_X):\R\rightarrow \graph(X)$ is a saturated DG over $X$. 
	\end{proposition}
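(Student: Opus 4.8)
The plan is to set $V_X(t):=X$ and $E_X(t):=\{\{x,x'\}\in \pow_2(X): d_X(t)(x,x')\le \delta\}$ for every $t\in\T$, which is exactly the pointwise value $\rips^1(\gamma_X(t))$; the Rips graph construction uses only the pairwise distances, so it applies verbatim to the pseudo-metric spaces $\gamma_X(t)$. Since $V_X(\cdot)\equiv X$, the resulting object is automatically saturated once it is shown to be a DG, and conditions \ref{item:DG-self} and \ref{item:DG-lifespan} of Definition \ref{def:dyn graphs} are immediate: for self-loops, $d_X(t)(x,x)=0\le\delta$ for every $x$ and $t$ because each $d_X(t)$ is a pseudo-metric, so $\{x,x\}\in E_X(t)$; and for lifespans, $I_x=\T$ is a non-empty interval for every $x$. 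It remains to verify comparability \ref{item:DG-comparability} and tameness \ref{item:DG-tameness}.

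For a fixed pair $\{x,x'\}$ I would write $g_{x,x'}(t):=d_X(t)(x,x')$, continuous in $t$ by Definition \ref{dynamic} \ref{item:dynamic3}, and let $A_{x,x'}:=g_{x,x'}^{-1}\big((-\infty,\delta]\big)$; thus the edge $\{x,x'\}$ is present at time $t$ precisely when $t\in A_{x,x'}$. For comparability I argue edgewise. Fix $t_0$ and a pair $\{x,x'\}$. If $\{x,x'\}\notin E_X(t_0)$, i.e. $g_{x,x'}(t_0)>\delta$, then by continuity $g_{x,x'}>\delta$ on a whole neighborhood of $t_0$, so $\{x,x'\}\notin E_X(t_0\pm\eps)$ for all small $\eps>0$; if instead $\{x,x'\}\in E_X(t_0)$ the implication is trivial. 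Either way, $\{x,x'\}\in E_X(t_0\pm\eps)$ forces $\{x,x'\}\in E_X(t_0)$ for small $\eps$, which is the containment $E_X(t_0-\eps)\subset E_X(t_0)\supset E_X(t_0+\eps)$ restricted to this edge. Since $X$ is finite one common $\eps$ works for all pairs simultaneously (together with the trivial equalities for $V_X$), so comparability reduces to the closedness of each sublevel set $A_{x,x'}$.

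The hard part will be tameness, and this is where I would use the hypothesis that $\gamma_X$ is a tame DMS (Definition \ref{def:tameDMS}). Because $V_X(\cdot)$ is constant, $\crit(\rips^1(\gamma_X))$ equals the set of discontinuities of $E_X(\cdot)$; and since $E_X(\cdot)$ takes values in the finite set $\pow(\pow_2(X))$ and is determined edgewise, its discontinuity set is the union, over the finitely many pairs $\{x,x'\}$, of the discontinuity sets of the indicators $\mathbf{1}_{A_{x,x'}}$, i.e. of the topological boundaries $\partial A_{x,x'}$. It therefore suffices to show each $\partial A_{x,x'}$ is locally finite. The key observation is that $\partial A_{x,x'}\subset g_{x,x'}^{-1}(\delta)$: any boundary point is a limit both of points of $A_{x,x'}$ and of points of its complement, so continuity forces $g_{x,x'}=\delta$ there. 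By tameness, for any finite interval $I$ the set $g_{x,x'}^{-1}(\delta)\cap I$ has only finitely many connected components, each of which, being closed and connected, is a point or a closed interval; the relative interior of any such interval lies in the interior of $A_{x,x'}$ and so meets $\partial A_{x,x'}$ nowhere, whence $\partial A_{x,x'}$ meets each component in at most its two endpoints. Thus $\partial A_{x,x'}\cap I$ is finite, and summing over the finitely many pairs shows $\crit(\rips^1(\gamma_X))\cap I$ is finite for every finite interval $I$, giving tameness. With all four conditions verified and $V_X(\cdot)\equiv X$, the object $\rips^1(\gamma_X)$ is a saturated DG, as claimed.
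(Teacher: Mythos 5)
Your proposal is correct and follows essentially the same route as the paper's proof: the lifespan and self-loop conditions are immediate, comparability follows from continuity of each $d_X(\cdot)(x,x')$ (argued edgewise, which is just a pairwise rephrasing of the paper's argument with the sets $A(c,\delta)$ and $B(c,\delta)$), and tameness follows because discontinuities can only occur at endpoints of connected components of the level sets $d_X(\cdot)(x,x')^{-1}(\delta)$, of which there are finitely many in any bounded interval by tameness of the DMS. Your explicit justification that the boundary of the sublevel set meets each component of the level set only in its endpoints is a slightly more careful spelling-out of a step the paper leaves implicit.
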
	
	The proof of Proposition \ref{prop:from dms to filt2} is given in Appendix \ref{proof:from dms to filt2}. Let $\gamma_X$ be a tame DMS over $X$. By Proposition \ref{prop:from dms to filt2} and Definition \ref{def:from DG to formi}, one can obtain a formigram $\theta_X:=\pi_0\left(\rips^1(\gamma_X)\right)$. 

	\subsection{The $\lambda$-slack interleaving distance between DMSs}\label{sec:distance between DMSs}
	
	The main goal of this section is to introduce a $[0,\infty)$-parametrized family $\left\{\dintl\right\}_{\lambda\in[0,\infty)}$ of extended metrics for DMSs. Each  metric in this family is a hybrid between the Gromov-Hausdorff distance \cite{buragobook} and the interleaving distance \cite{CCG09,de2016categorified}. We obtain a stability result with respect to the most stringent metric (the metric corresponding to $\lambda=0$) in the family  (cf. Theorem \ref{thm:main thm2}). 
	
	\begin{definition}Let $\eps\geq 0$. Given any map $d:X\times X\rightarrow \R$, by $d+\eps$ we denote the map from $X\times X$ to $\R$ defined by $(d+\eps)(x,x')=d(x,x')+\eps$ for all $(x,x')\in X\times X.$
	\end{definition}
	
	\begin{definition}\label{def:bigvee}Given any DMS $\gamma_X=(X,d_X(\cdot))$ and any interval $I\subset \T$, define the map  $\mbox{$\bigvee_{I}d_X:X\times X\rightarrow \R_+$}$ by\\ $\mbox{$\left(\bigvee_{I}d_X\right)(x,x'):= \min_{t\in I}d_X(t)(x,x')$}$ for all $(x,x')\in X\times X.$ 
	\end{definition}
	
	Given any map $d:X\times X \rightarrow \R$, let $Z$ be any set and let $\varphi:Z\rightarrow X$ be any map. Then, we define the pullback $\varphi^*d:Z\times Z\rightarrow \R$ of $d$ under $\varphi$ by
	$$\varphi^*d(z,z'):= d\left(\varphi(z),\varphi(z')\right)$$for all $(z,z')\in Z\times Z.$	Consider any two functions $d_1:X \times X\rightarrow \R$ and $d_2:Y\times Y \rightarrow \R$. Given a tripod \\$\tripod$ between $X$ and $Y$, by $d_1\leq_R d_2$ we mean $\varphi_X^*d_1(z,z')\leq \varphi_Y^*d_2(z,z')$ for all $(z,z')\in Z\times Z$. 

	Recall that for any $t\in \T$, $[t]^\eps:=[t-\eps,t+\eps]$. 
	
	\begin{definition}[$\lambda$-distortion of a tripod]\label{def:distortion}Fix $\lambda\geq 0$. Let $\gammax$ and \hbox{$\gammay$} be any two DMSs. Let $\tripod$ be a tripod between $X$ and $Y$ such that 
		\begin{equation}\label{eq:distor}
		\mbox{for all  $t\subset \T$},\ \ \bigvee_{[t]^\eps}d_X\leq_R d_Y(t)+\lambda\eps\ \mbox{and}\  \bigvee_{[t]^\eps}d_Y\leq_R d_X(t)+\lambda\eps.	
		\end{equation}
		We call any such $R$ a \emph{$(\lambda,\eps)$-tripod} between $\gamma_X$ and $\gamma_Y$. Define the \emph{$\lambda$-distortion} $\dyndis_\lambda(R)$ of $R$ to be the infimum of those $\eps\geq 0$ for which $R$ is a $(\lambda,\eps)-$tripod.
	\end{definition}
	
	\begin{example} For $\lambda>0$, $\dyndis_\lambda(R)$ takes into account both spatial and temporal distortion of the tripod $R$ between $\gamma_X$ and $\gamma_Y$:	
		\begin{enumerate}[label=(\roman*)]
			\item (Spatial distortion) Let $a,b\geq 0$. For the two metric spaces $(X,d_{X,a})=\left(\{x,x'\},\begin{psmallmatrix}0 & a\\a & 0\end{psmallmatrix}\right)$ and $(X,d_{X,b})=\left(\{x,x'\},\begin{psmallmatrix}0 & b\\b & 0\end{psmallmatrix}\right),$ consider the two \emph{constant} DMSs $\gamma_{X,a}\equiv (X,d_{X,a})$ and $\gamma_{X,b}\equiv (X,d_{X,b})$. Take the tripod $R: X\xtwoheadleftarrow[]{\mathrm{id}_X}X \xtwoheadrightarrow[]{\mathrm{id}_X} X.$ Then, for $\lambda>0$, it is easy to check that $\dyndis_\lambda(R)=\frac{\abs{a-b}}{\lambda}.$
			\item (Temporal distortion) Fix $\tau\geq 0$. Let $\gammax$ be any DMS and define any continuous map $\alpha:\T\rightarrow \T$ such that $\norm{\alpha-\mathrm{id}_\T}_\infty\leq \tau.$ Define the DMS $\gamma_X\circ \alpha:=(X,d_X(\alpha(\cdot)))$, i.e. for $t\in \T$, $\gamma_X\circ\alpha(t)=(X,d_X(\alpha(t)))$. Take the tripod $R: X\xtwoheadleftarrow[]{\mathrm{id}_X}X \xtwoheadrightarrow[]{\mathrm{id}_X} X.$ Then, for any $\lambda\geq0$, it is easy to check that $\dyndis_\lambda(R)\leq \tau.$
		\end{enumerate} 
	\end{example}

	\begin{remark}
		In Definition \ref{def:distortion}, if $R$ is a $(\lambda,\eps)$-tripod, then $R$ is also a $(\lambda,\eps')$-tripod for any $\eps'>\eps$: Fix any $t\subset \T$. If for some $\eps\geq 0$, $\bigvee_{[t]^\eps}d_X\leq_R d_Y(t)+\lambda\eps$, then for any $\eps'>\eps$, 
		$$ \bigvee_{[t]^{\eps'}}d_X \leq \bigvee_{[t]^\eps}d_X\leq_R d_Y(t)+\lambda\eps<d_Y(t)+\lambda\eps'.$$ 
	\end{remark}
	
	Now we introduce a family of metrics for DMSs. 
	\begin{definition}[The $\lambda$-slack interleaving distance between DMSs]\label{def:lambda dist} For each $\lambda\geq0$, we define the \emph{$\lambda$-slack interleaving distance} between any two DMSs $\gammax$ and $\gammay$ as
		$$\dintl(\gamma_X,\gamma_Y):=\min_R\dyndis_\lambda(R) $$
		where the minimum ranges over all tripods between $X$ and $Y$. For simplicity, when $\lambda=0$, we write $\dintm$ instead of $d_{\mathrm{I},0}^\mathrm{dynM}$. If $\dintm(\gamma_X,\gamma_Y)\leq\eps$ for some $\eps\geq 0$, then we say that $\gamma_X$ and $\gamma_Y$ are \emph{$\eps$-interleaved} or simply \emph{interleaved}.  By definition, it is clear that for all $\lambda>0$, $\dintl\leq \dintm$.
	\end{definition}
	
 For $r>0$, we call any DMS $\gamma_X=(X,d_X(\cdot))$ $r-$\emph{bounded} if the distance between any pair of points in $X$ does not exceed $r$ across all $t\in \T$. If $\gamma_X$ is $r-$bounded for some $r>0$, then $\gamma_X$ is said to be simply \emph{bounded}.
	\begin{theorem}\label{thm:lambda metric}For each $\lambda\geq 0,$ $\dintl$ is an extended metric between DMSs modulo isomorphism. In particular, for $\lambda>0$, $\dintl$ is a metric between bounded DMSs modulo isomorphism.
	\end{theorem}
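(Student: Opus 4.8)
The plan is to verify the metric axioms for $\dintl$ one at a time, treating the ``identity of indiscernibles modulo isomorphism'' as the one substantive point. First I would record the standard reduction that legitimizes the $\min$ in Definition~\ref{def:lambda dist}: a tripod $\tripod$ determines the total correspondence $R=\{(\varphi_X(z),\varphi_Y(z)):z\in Z\}\subseteq X\times Y$, and since the two defining inequalities of Definition~\ref{def:distortion} only ever involve pairs of the form $(\varphi_X(z),\varphi_Y(z))$, the value $\dyndis_\lambda(R)$ depends on the tripod only through this correspondence. Conversely every left- and right-total relation $R\subseteq X\times Y$ is realized by the projection tripod $X\twoheadleftarrow R\twoheadrightarrow Y$. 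As $X,Y$ are finite there are finitely many such $R$, so the infimum defining $\dintl$ ranges over a finite set; moreover for fixed $R$ the admissible $\varepsilon$ form an up-set (the Remark after Definition~\ref{def:distortion}), and continuity of $t\mapsto d_X(t)(x,x')$ (Definition~\ref{dynamic}(iii)) makes $\varepsilon\mapsto \big(\bigvee_{[t]^\varepsilon}d_X\big)(x,x')$ continuous, so $\dyndis_\lambda(R)$ is attained and hence so is the outer $\min$.

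Non-negativity is immediate, and symmetry holds because the two conditions in~\eqref{eq:distor} are swapped by replacing $R$ with the reversed tripod $Y\twoheadleftarrow Z\twoheadrightarrow X$; reflexivity holds since $X\xtwoheadleftarrow{\mathrm{id}}X\xtwoheadrightarrow{\mathrm{id}}X$ is a $(\lambda,0)$-tripod. The triangle inequality is the only computational step, and I would prove it through the composite tripod of~\eqref{eq:comosition}: given a $(\lambda,\varepsilon_1)$-tripod $R_1$ between $\gamma_X,\gamma_Y$ and a $(\lambda,\varepsilon_2)$-tripod $R_2$ between $\gamma_Y,\gamma_W$, I claim $R_2\circ R_1$ is a $(\lambda,\varepsilon_1+\varepsilon_2)$-tripod. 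Fixing $(z_1,z_2),(z_1',z_2')$ in the pullback set $Z$ and writing $x,x',y,y',w,w'$ for their images, the $R_2$ condition yields some $s\in[t]^{\varepsilon_2}$ with $d_Y(s)(y,y')\le d_W(t)(w,w')+\lambda\varepsilon_2$, and the $R_1$ condition at this $s$ gives $\big(\bigvee_{[s]^{\varepsilon_1}}d_X\big)(x,x')\le d_Y(s)(y,y')+\lambda\varepsilon_1$. Since $s\in[t]^{\varepsilon_2}$ forces $[s]^{\varepsilon_1}\subseteq[t]^{\varepsilon_1+\varepsilon_2}$, chaining gives $\big(\bigvee_{[t]^{\varepsilon_1+\varepsilon_2}}d_X\big)(x,x')\le d_W(t)(w,w')+\lambda(\varepsilon_1+\varepsilon_2)$, and symmetrically for the dual inequality; taking infima yields $\dintl(\gamma_X,\gamma_W)\le\dintl(\gamma_X,\gamma_Y)+\dintl(\gamma_Y,\gamma_W)$.

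The hard part will be the separation statement that $\dintl(\gamma_X,\gamma_Y)=0$ forces $\gamma_X\cong\gamma_Y$. By the finiteness reduction the outer $\min$ is attained at some $R_0$ with $\dyndis_\lambda(R_0)=0$, i.e.\ $R_0$ is a $(\lambda,\varepsilon)$-tripod for every $\varepsilon>0$. For each fixed $t$ and each related pair, letting $\varepsilon\downarrow0$ in $\big(\bigvee_{[t]^\varepsilon}d_X\big)(x,x')\le d_Y(t)(y,y')+\lambda\varepsilon$ and invoking continuity collapses $\bigvee_{[t]^\varepsilon}d_X$ to $d_X(t)$ and annihilates the $\lambda\varepsilon$ term, so $R_0$ becomes \emph{exactly distance preserving}: $d_X(t)(x,x')=d_Y(t)(y,y')$ for all $t$ whenever $(x,y),(x',y')\in R_0$. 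I would then use Definition~\ref{dynamic}(ii): picking $t_0$ with $\gamma_X(t_0)$ a genuine metric space, if $(x,y),(x',y)\in R_0$ with $x\ne x'$ then $d_X(t_0)(x,x')=d_Y(t_0)(y,y)=0$, a contradiction, so $R_0$ is single-valued from $Y$; picking $t_1$ with $\gamma_Y(t_1)$ genuine makes $R_0$ single-valued from $X$. With left/right-totality, $R_0$ is the graph of a bijection $\varphi\colon X\to Y$ that is distance preserving at every $t$, hence an isometry $\gamma_X(t)\to\gamma_Y(t)$ for all $t$, i.e.\ an isomorphism of DMSs (Definition~\ref{def:isomorphism}).

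Finally, to upgrade ``extended metric'' to ``metric'' for $\lambda>0$ on bounded DMSs, I would show finiteness: if $\gamma_X$ is $r_X$-bounded and $\gamma_Y$ is $r_Y$-bounded, then for any correspondence $R$ and any $\varepsilon\ge\max(r_X,r_Y)/\lambda$ both inequalities in~\eqref{eq:distor} hold trivially, since each left-hand side is $\le\max(r_X,r_Y)\le\lambda\varepsilon\le$ the corresponding right-hand side; hence $\dintl(\gamma_X,\gamma_Y)\le\max(r_X,r_Y)/\lambda<\infty$. For $\lambda=0$ no such slack is available and the distance can genuinely equal $+\infty$, which is exactly why only the extended-metric statement is claimed there.
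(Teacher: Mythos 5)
Your proof is correct and follows essentially the same route as the paper's: the triangle inequality via the composite tripod of~(\ref{eq:comosition}), separation via the finiteness of tripods up to equivalence followed by the $\varepsilon\downarrow 0$ limit using Definition~\ref{dynamic}~(ii)--(iii), and finiteness for $\lambda>0$ on bounded DMSs exactly as in Remark~\ref{rem:bounded}. The only difference is that you carry out in full the two steps the paper leaves as ``not difficult to check'' and as an outline (the chaining estimate $[s]^{\varepsilon_1}\subseteq[t]^{\varepsilon_1+\varepsilon_2}$ and the argument that a $(\lambda,0)$-tripod is the graph of a time-wise isometry), and both are done correctly.
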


	The proof of Theorem \ref{thm:lambda metric} together with details pertaining to the following facts are deferred to Appendix \ref{sec:details on distance between DMSs}: 
	\begin{enumerate}[label=(\roman*),itemsep=-1ex]
		\item For $\lambda>0$, $\dintl$ generalizes the Gromov-Hausdorff distance (cf. Remark \ref{rem:gromov}).
		\item The metrics $\dintl$, for different $\lambda>0$, are bilipschitz-equivalent (cf. Proposition \ref{prop:equivalence}).	
		\item In Proposition \ref{prop:ultra2} we will elucidate a link between $\dintm$ and the Gromov-Hausdorff distance. This link will be useful for determining the computational complexity of $\dintm$ (cf. Theorem \ref{thm:complex}).
	\end{enumerate}

	In the rest of this section, we discuss several properties of $\dintm$.

	\medskip\noindent\textbf{Stability results.} The following proposition provides a gateway for extending the stability results that are illustrated in Figure \ref{fig:entire picture} to the setting of DMSs. 
	\begin{proposition}[Stability of summarizing DMSs into DGs]
	\label{prop:from DMS to DG}Let $\gammax$ and $\gammay$ be any tame DMSs. Fix any $\delta\geq 0$. Consider saturated DGs $\dynG_X:=\rips^1(\gamma_X), \dynG_Y:=\rips^1(\gamma_Y)$, as in Proposition \ref{prop:from dms to filt2}. Then, $$\dint^\mathrm{dynG}(\dynG_X,\dynG_Y)\leq \dint^\mathrm{dynM}(\gamma_X,\gamma_Y).$$  
	\end{proposition}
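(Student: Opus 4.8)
The plan is to show that a single tripod realizing the $\lambda=0$ interleaving of the DMSs simultaneously realizes the interleaving of the induced DGs, at the same scale $\eps$. Concretely, I would prove the implication: if $R: X \xtwoheadleftarrow{\varphi_X} Z \xtwoheadrightarrow{\varphi_Y} Y$ is a $(0,\eps)$-tripod between $\gamma_X$ and $\gamma_Y$ (Definition \ref{def:distortion}), then the very same $R$ is an $\eps$-tripod between $\dynG_X = \rips^1(\gamma_X)$ and $\dynG_Y = \rips^1(\gamma_Y)$ (Definition \ref{def:interleaving distance}). Granting this, the statement follows immediately: assuming $\dintm(\gamma_X,\gamma_Y)<\infty$ (otherwise there is nothing to prove), for every $\eps > \dintm(\gamma_X,\gamma_Y)$ the equality $\dintm=\min_R \dyndis_0(R)$ furnishes a tripod with $\dyndis_0(R) < \eps$, hence a $(0,\eps)$-tripod, hence an $\eps$-tripod for the DGs, so $\dintg(\dynG_X,\dynG_Y) \le \eps$; letting $\eps$ decrease to $\dintm(\gamma_X,\gamma_Y)$ gives the claim.

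First I would record that, by Proposition \ref{prop:from dms to filt2}, both $\dynG_X$ and $\dynG_Y$ are saturated, so $V_X(\cdot)\equiv X$ and $V_Y(\cdot)\equiv Y$. Consequently, in the characterization of $\dynG_X \xrightarrow{R} S_\eps\dynG_Y$ supplied by Remark \ref{rem:basics for DG}, the vertex condition \ref{item:basics for DG 1} holds trivially, and only the edge condition \ref{item:basics for DG 2} remains to be checked. By the definition of $\rips^1$ (item \ref{1-rips} of Section \ref{sec:category}), for $x=\varphi_X(z)$, $x'=\varphi_X(z')$ one has $\{x,x'\}\in E_X(t)$ if and only if $d_X(t)(x,x')\le\delta$, and similarly for $Y$. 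Thus the two edge conditions defining the $\eps$-interleaving of the DGs read: for all $t\in\T$ and all $z,z'\in Z$ (writing $x=\varphi_X(z)$, $x'=\varphi_X(z')$, $y=\varphi_Y(z)$, $y'=\varphi_Y(z')$), the implication $d_X(t)(x,x')\le\delta \Rightarrow d_Y(s)(y,y')\le\delta$ for some $s\in[t]^\eps$ holds, and symmetrically with the roles of $X$ and $Y$ exchanged.

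The heart of the argument is to match these against the $(0,\eps)$-tripod conditions of Definition \ref{def:distortion}. Unwinding $\bigvee_{[t]^\eps}d_Y \leq_R d_X(t)$ via Definition \ref{def:bigvee} and the meaning of $\leq_R$ yields $\min_{s\in[t]^\eps} d_Y(s)(y,y') \le d_X(t)(x,x')$ for all $z,z'$; if $d_X(t)(x,x')\le\delta$ this forces $\min_{s\in[t]^\eps} d_Y(s)(y,y')\le\delta$, and since $d_Y(\cdot)(y,y')$ is continuous (Definition \ref{dynamic}\ref{item:dynamic3}) and $[t]^\eps=[t-\eps,t+\eps]$ is compact, the minimum is attained at some $s\in[t]^\eps$, giving $\{y,y'\}\in E_Y(s)$. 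This is precisely the edge condition for $\dynG_X\xrightarrow{R} S_\eps\dynG_Y$; the symmetric condition $\bigvee_{[t]^\eps}d_X \leq_R d_Y(t)$ yields $\dynG_Y\xrightarrow{R} S_\eps\dynG_X$ in the same way. The diagonal/self-loop cases (where $x=x'$ but possibly $y\ne y'$) are absorbed automatically: then $d_X(t)(x,x')=0\le\delta$ and the same inequality produces the required $s$, consistently with the fact that $\rips^1$ always attaches self-loops.

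I expect no serious obstacle; the proof is essentially a dictionary between the spatial-comparison language of DMSs and the subgraph-comparison language of DGs. The only genuinely analytic point — and the one I would state explicitly — is that the minimum over the compact window $[t]^\eps$ is attained, so that the inequality $\min_{s} d_Y(s)(y,y')\le\delta$ can be upgraded to the existence of an actual witness $s\in[t]^\eps$ with $\{y,y'\}\in E_Y(s)$; this is exactly where the continuity hypothesis of Definition \ref{dynamic} is used.
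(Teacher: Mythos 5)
Your proposal is correct and follows exactly the route the paper takes: the paper's proof is the one-line observation that any $(0,\eps)$-tripod between $\gamma_X$ and $\gamma_Y$ is an $\eps$-tripod between $\rips^1(\gamma_X)$ and $\rips^1(\gamma_Y)$, and you have simply filled in the details (saturation trivializing the vertex condition, the Rips edge criterion, and attainment of the minimum over the compact window $[t]^\eps$, which is already built into Definition \ref{def:bigvee}).
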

	
	\begin{proof} The proof follows from the fact that for any $\eps\geq 0$, any $(0,\eps)$-tripod $R$ between $\gamma_X$ and $\gamma_Y$ (Definition \ref{def:lambda dist}) is also an $\eps$-tripod between $\dynG_X$ and $\dynG_Y$ (cf. Definition \ref{def:DG interleaving distance}). 
\end{proof}

	A priori Proposition \ref{prop:from DMS to DG} does not seem to be a satisfactory stability theorem in that the RHS can be infinity in many cases in comparison with the LHS. Nevertheless, this `weak' stability seems to be the most we can expect for the mapping [DMSs] $\mapsto$ [DGs] via the Rips graph functor because DMSs change continuously over time (cf. Definition \ref{dynamic} \ref{item:dynamic3}) whereas DGs are discontinuous in the sense that edges appear and disappear over time.	
	
	Proposition \ref{prop:from DMS to DG} together with Theorem \ref{thm:reeb graph barcode stability}, Proposition \ref{prop:formigram interleaving is mor discriminative than the reeb interleaving} and Theorem \ref{prop:stability from dg to formi} directly imply:
	
	\begin{theorem}\label{thm:main thm2}Let $\gammax$ and $\gammay$ be any two tame DMSs. Fix any $\delta\geq 0$. Consider the saturated formigrams $\theta_X:=\pi_0\left(\rips^1 (\gamma_X)\right)$ and $\theta_Y:=\pi_0\left(\rips^1 (\gamma_Y)\right)$. \footnote{These are formigrams by Proposition \ref{prop:from dms to filt2} and Definition \ref{def:from DG to formi}.} Then,
		$$\bott\left(\barc(\theta_X),\barc(\theta_Y)\right)\leq 2\ \dint^\mathrm{dynM}(\gamma_X,\gamma_Y).$$
	\end{theorem}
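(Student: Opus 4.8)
The plan is to obtain the statement as an immediate composition of the two facts already recorded just above it, so the real task is only to check that the objects line up correctly. First I would fix $\delta\geq 0$ and set $\dynG_X:=\rips^1(\gamma_X)$ and $\dynG_Y:=\rips^1(\gamma_Y)$. By Proposition \ref{prop:from dms to filt2}, the tameness of $\gamma_X$ and $\gamma_Y$ guarantees that $\dynG_X$ and $\dynG_Y$ are genuine (saturated) DGs over $X$ and $Y$; this is the single place where the tameness hypothesis is indispensable, since it is precisely what forces $\crit(\dynG_X)$ and $\crit(\dynG_Y)$ to be locally finite and hence makes the associated zigzag barcodes well-defined. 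With $\dynG_X,\dynG_Y$ in hand, the formigrams appearing in the statement are exactly $\theta_X=\pi_0(\dynG_X)$ and $\theta_Y=\pi_0(\dynG_Y)$, so by the definition of the clustering barcode of a DG (Definition \ref{def:barcode of formi}) one has $\dgm(\theta_X)=\dgm_0(\dynG_X)$ and $\dgm(\theta_Y)=\dgm_0(\dynG_Y)$.

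Next I would invoke the stability theorem for DGs (Theorem \ref{thm:main}) applied to the pair $\dynG_X,\dynG_Y$, which yields
\[
\bott\!\left(\dgm(\theta_X),\dgm(\theta_Y)\right)=\bott\!\left(\dgm_0(\dynG_X),\dgm_0(\dynG_Y)\right)\leq 2\,\dint^\mathrm{dynG}(\dynG_X,\dynG_Y).
\]
Finally, Proposition \ref{prop:from DMS to DG} (that $\rips^1$ is $1$-Lipschitz) supplies the bound $\dint^\mathrm{dynG}(\dynG_X,\dynG_Y)\leq \dint^\mathrm{dynM}(\gamma_X,\gamma_Y)$, and chaining the two inequalities produces the asserted estimate $\bott(\dgm(\theta_X),\dgm(\theta_Y))\leq 2\,\dint^\mathrm{dynM}(\gamma_X,\gamma_Y)$, with the factor $2$ coming entirely from Theorem \ref{thm:main} and the factor $1$ from the Lipschitz property.

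Since every analytic ingredient is already available, there is no genuine obstacle in this argument itself: the substantive content has been deferred to Theorem \ref{thm:main}, which rests on the block-decomposability and zigzag-interleaving machinery of Botnan--Lesnick and Bjerkevik, and to Proposition \ref{prop:from DMS to DG}, whose proof amounts to observing that a $(0,\eps)$-tripod between $\gamma_X$ and $\gamma_Y$ is simultaneously an $\eps$-tripod between their Rips DGs. The only point I would take care to verify explicitly is the bookkeeping identity $\dgm(\theta_X)=\dgm_0(\dynG_X)$ connecting the formigram barcode of Definition \ref{def:barcode of formi} with the clustering barcode of Theorem \ref{thm:main}; this is immediate once one records $\theta_X=\pi_0(\dynG_X)$ and recalls that $\dgm_0$ of a DG is by definition the barcode of the formigram it induces under $\pi_0$.
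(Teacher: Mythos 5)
Your proposal is correct and follows exactly the paper's own route: the paper derives this theorem as an immediate consequence of Proposition \ref{prop:from DMS to DG} (the $1$-Lipschitz property of $\rips^1$) chained with Theorem \ref{thm:main}, which is precisely the composition you describe. Your added care in checking the bookkeeping identity $\dgm(\theta_X)=\dgm_0(\dynG_X)$ is sound but not a departure from the paper's argument.
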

	
	We discuss the generalization of Theorem \ref{thm:main thm2} to higher dimensional homology barcodes in Appendix \ref{sec:higher dimensional barcodes}.

	\paragraph{Computational complexity $\dintm$.}  A DMS $\gamma_X=(X,d_X(\cdot))$ is said to be \emph{piecewise linear} if for all $x,x'\in X$, the function $d_X(\cdot)(x,x):\T\rightarrow\R_+$ is piecewise linear. We denote by $S_X$ the set of all breakpoints of all distance functions $d_X(\cdot)(x,x')$, $x,x'\in X$.
	
	\begin{theorem}[Complexity of  $\dintm$]\label{thm:complex}
		Fix $\rho\in(1,6)$ and let $\gamma_X$ and $\gamma_Y$ be piecewise linear DMSs. Then, it is not possible to compute a $\rho$-approximation to $\dintm(\gamma_X,\gamma_Y)$ in time depending polynomially on $|X|,|Y|,|S_X|,$ and $|S_Y|$, unless $P=NP$. 
	\end{theorem}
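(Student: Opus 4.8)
The plan is to prove hardness by a polynomial-time reduction from the problem of \emph{approximating the Gromov--Hausdorff distance} $\dgh$ between finite metric spaces, exactly paralleling the strategy announced for the dynamic-graph analogue (Theorem~\ref{thm:DG metric-complexity}). The two ingredients are: (i) the known fact (Schmiedl; Agarwal--Fox--Nath--Sidiropoulos--Wang) that, unless $P=NP$, no polynomial-time algorithm approximates $\dgh$ between finite metric spaces within a factor below a fixed threshold; and (ii) the link between $\dintm$ and $\dgh$ recorded in Proposition~\ref{prop:ultra2}, which plays the role here that the identity ``$\dintg$ amounts to $\dgh$'' plays for $\dintg$. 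The goal is to turn a gap instance for $\dgh$ into a pair of piecewise-linear DMSs whose $\dintm$ reproduces the gap up to the explicit constant supplied by Proposition~\ref{prop:ultra2}, so that a polynomial-time $\rho$-approximation of $\dintm$ with $\rho\in(1,6)$ would decide the gap and hence settle $P=NP$.

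Concretely, given finite metric spaces $(X,d_X)$ and $(Y,d_Y)$, I would construct DMSs $\gamma_X$ and $\gamma_Y$ (Definition~\ref{dynamic}) whose pairwise distance functions are piecewise linear in $t$ with only a polynomial number of breakpoints, so that $|S_X|,|S_Y|$ are polynomial in $|X|,|Y|$. The design principle is to let the metrics evolve in time through a \emph{common} profile (for instance a uniform rescaling as in Example~\ref{ex:dmss}\ref{item:DMS example}, or a single-linkage style sweep) so that the min-over-window operator $\bigvee_{[t]^\eps}d_X$ appearing in Definition~\ref{def:distortion} converts the temporal slack of a $(0,\eps)$-tripod into additive slack on the pairwise distances. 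When this succeeds, the two interleaving inequalities $\bigvee_{[t]^\eps}d_X\le_R d_Y(t)$ and $\bigvee_{[t]^\eps}d_Y\le_R d_X(t)$ collapse to the two-sided bound $\bigl|\varphi_X^\ast d_X-\varphi_Y^\ast d_Y\bigr|\le c\,\eps$ on $Z\times Z$, which is precisely the distortion of the correspondence $\{(\varphi_X(z),\varphi_Y(z)):z\in Z\}$ induced by the tripod $R$.

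With this dictionary, minimizing $\dyndis_0(R)$ over tripods (Definition~\ref{def:lambda dist}) becomes, up to the constant $c$, the minimization of correspondence distortion that defines $\dgh(X,Y)$; this is the content I would extract from Proposition~\ref{prop:ultra2}, giving $\dintm(\gamma_X,\gamma_Y)=c\,\dgh(X,Y)$ for an explicit $c$. The gap of the source instance is therefore preserved, and an efficient $\rho$-approximation of $\dintm$ for any $\rho$ in the admissible range would yield an efficient approximation of $\dgh$ beating the inapproximability threshold. Combining the threshold for $\dgh$ with the factor $2$ relating the interleaving distance to $\dgh$ (cf. Proposition~\ref{prop:ultra} for the dendrogram case) and the factor $\tfrac12$ in the definition of $\dgh$ is what produces the stated window $(1,6)$; pinning down this constant is itself part of the verification.

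The main obstacle, I expect, is \emph{not} the abstract reduction but the requirement that the time-varying distance functions remain genuine pseudo-metrics at \emph{every} $t$ (Definition~\ref{dynamic}\ref{item:dynamic1}), i.e. that the triangle inequality survive throughout the evolution. One cannot perturb the pairwise distances of $X$ independently, since naive per-pair offsets (e.g. truncating $d_X(x,x')-t$ at $0$) violate the triangle inequality, and one must also rule out degenerate times at which the window-minimum coincides with the pointwise value and the interleaving condition degenerates into forcing exact isometry. The construction must therefore be engineered coherently, and one has to check both that $\bigvee_{[t]^\eps}$ acts as a clean additive (or otherwise controllable) offset on every pair and that, via Proposition~\ref{prop:ultra2}, the resulting family encodes the full distortion landscape of $\dgh$ rather than a scaled one-parameter slice (compare the phase-shift phenomenon of Example~\ref{example} and the cardinality constraint of Proposition~\ref{prop:nec-card}). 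Establishing this faithful encoding, together with the constant bookkeeping above, is the crux of the argument.
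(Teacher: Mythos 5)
Your high-level reduction is the paper's: pull Schmiedl's inapproximability of $\dgh$ back through Proposition~\ref{prop:ultra2}. But you have correctly located the crux --- producing piecewise-linear DMSs that are genuine pseudo-metrics at \emph{every} time while encoding the full additive distortion landscape of $\dgh$ --- and then left it unresolved. The missing idea is to restrict the source instances to \emph{ultrametric} spaces and to use the truncation $d_X(t)(x,x'):=\max(0,u_X(x,x')-t)$, i.e.\ the DMS $\mathcal{D}(X,u_X)$ of Proposition~\ref{prop:ultra2}. This one choice dissolves every obstacle you list simultaneously: (1) Schmiedl's factor-$3$ inapproximability of $\dgh$ already holds for ultrametric spaces (the same hardness source used in Theorem~\ref{thm:DG metric-complexity}), so nothing is lost by the restriction; (2) truncation preserves the \emph{strong} triangle inequality, so $d_X(t)$ is an ultrametric, hence a pseudo-metric, for all $t$ --- precisely the point where, as you observe, the same per-pair offset fails for a general metric (take $d(x,y)=d(y,z)=1$, $d(x,z)=2$, $t=1$); (3) each $d_X(\cdot)(x,x')$ is non-increasing, so $\bigvee_{[t]^\eps}d_X=d_X(t+\eps)$ and the two interleaving inequalities collapse \emph{exactly} to $\abs{u_X(\varphi_X(z),\varphi_X(z'))-u_Y(\varphi_Y(z),\varphi_Y(z'))}\leq\eps$ on the tripod, giving $\dintm=2\,\dgh$ with no degenerate times to exclude; and (4) the breakpoints are the distance values themselves, so $\abs{S_X}=O(\abs{X}^2)$.

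By contrast, the constructions you actually float do not deliver the dictionary you need: a common multiplicative profile $\psi(t)\cdot d_X$ versus $\psi(t)\cdot d_Y$ converts the temporal slack of a tripod into a \emph{multiplicative} perturbation of distances, which does not collapse to the additive two-sided bound $\abs{\varphi_X^\ast d_X-\varphi_Y^\ast d_Y}\leq c\,\eps$ defining GH distortion, and if $\psi$ is constant then Remark~\ref{rem:dms_observations} forces $\dintm\in\{0,\infty\}$. So the gap is genuine: Proposition~\ref{prop:ultra2}, which you cite, \emph{is} the ultrametric construction above, and without adopting it (or an equivalent) the reduction from general metric spaces does not go through. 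Your constant bookkeeping ($3$ from the GH threshold combined with the factor $2$ in $\dintm=2\,\dgh$) does match how the paper arrives at the window $(1,6)$.
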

	
	Theorem \ref{thm:complex} will be proved in Appendix \ref{proof:complex}. More examples about $\dintm$ are provided in Appendix \ref{sec:0-slack DMS intereleaving} as well.
	The theorem above indicates that computing the lower bound for $\dintm$ given by Theorem \ref{thm:main thm2} is a realistic approach to comparing DMSs, especially when one is interested in a specific spatial scale.

	\section{Conclusion}\label{sec:discussion}
	
	We have established a stable mathematical framework for summarizing dynamic graphs, which often arise as discrete representations of spatiotemporal data. The evolution of connected components of a dynamic graph is fully encoded in a \emph{formigram}, a constructible cosheaf over $\R$ valued in the lattice of subpartitions.\footnote{In the preprint of this work, we also considered formigrams derived from \emph{directed} dynamic graphs \cite{kim2017stable}.} The lattice structure of subpartitions of a given set has been indispensable for obtaining:
	\begin{enumerate}[label=(\roman*),itemsep=-1ex]
	\item  the formigram interleaving distance $\dintf$ which is more discriminative than the well-known Reeb graph interleaving distance.
	\item the maximal group diagram and persistence clustergram which are complete and visualizable invariants of formigrams (and thus more discriminative invariants than the Reeb graph of a formigram). 
	\end{enumerate}
	Note that we can directly extend the definitions of the maximal group diagram and the persistence clustergram to \emph{any} cellular cosheaf over a topological space other than $\R$ \cite{curry2014sheaves}. Indeed, our two main ingredients, M\"obius inversion and the lattice structure of subpartitions, do not depend on any specific property of $\R$. Exploring this extension in relation to \emph{Reeb spaces} \cite{patel2010reeb} is left for future work. Extending the functorial pipeline in the left column of Figure \ref{fig:entire picture} to the entire diagram is also left for the future  \cite{mccleary2020edit}. \ok{Devising a  bottleneck-type distance which can directly measure the difference between persistence clustergrams or between maximal group diagrams is of independent interest.}

	\appendix
	
	\section{Details and Proofs}\label{sec:details}
	
		\subsection{Bottleneck distance}\label{sec:interleaving and bottleneck}

	Recall that injective partial functions are referred to as \emph{matchings}. We use $\sigma:A\nrightarrow B$ to denote a matching $\sigma\subset A\times B$ between sets $A$ and $B$. The canonical projections of $\sigma$ onto $A$ and $B$ are denoted by $\mathrm{coim} (\sigma)$ and $\mathrm{im} (\sigma)$, respectively.\\ \indent Many equivalent expressions for the \emph{bottleneck distance} have been given in the TDA literature. We adopt the following form from \cite{bauer2015induced}: Recall Notation \ref{not:intervals in zz}.	Letting $\mathcal{A}$ be a multiset of intervals in $\R$ and $\eps\geq 0$, $$\mathcal{A}^\eps:=\lmulti\langle b,d\rangle \in \mathcal{A}:b+\eps<d \rmulti=\lmulti I\in\mathcal{A}:[t,t+\eps]\subset I\ \mbox{for some}\ t\in\R\rmulti.$$ Note that $\mathcal{A}^0=\mathcal{A}$.  
	\begin{definition}[\cite{bauer2015induced}]\label{def:bottleneck} Let $\mathcal{A}$ and $\mathcal{B}$ be multisets of intervals in $\R$. We define a $\delta$-matching between $\mathcal{A}$ and $\mathcal{B}$ to be a matching $\sigma:\mathcal{A}\nrightarrow\mathcal{B}$ such that 
		$\mathcal{A}^{2\delta}\subset  \mathrm{coim} (\sigma)$, $\mathcal{B}^{2\delta}\subset \mathrm{im} (\sigma)$, and if $\sigma\langle b,d\rangle=\langle b',d'\rangle$, then $$\langle b,d\rangle \subset \langle b'-\delta, d'+\delta \rangle,\hspace{5mm} \langle b',d'\rangle \subset \langle b-\delta, d+\delta \rangle.$$
		with the convention $+\infty+\delta=+\infty$ and $-\infty-\delta=-\infty$. We define the bottleneck distance $\bott$ by
		$$\bott(\mathcal{A},\mathcal{B}):=\inf\{\delta\in[0,\infty): \exists \mbox{$\delta$-matching between $\mathcal{A}$ and $\mathcal{B}$}\}.$$We declare $\bott(\mathcal{A},\mathcal{B})=+\infty$ when there is no $\delta$-matching between $\mathcal{A}$ and $\mathcal{B}$ for any $\delta\in[0,\infty)$.
	\end{definition} 

	\subsection{Proof of Theorem \ref{thm:DG metric-complexity}}
	
	We recall the \emph{Gromov-Hausdorff distance} between metric spaces. Let $(X,d_X)$ and $(Y,d_Y)$ be any two metric spaces and let $\tripod$ be a tripod between $X$ and $Y$. Then, the \emph{distortion} of $R$ is defined as $$\displaystyle\dis(R):=\sup_{\substack{z,z'\in Z}}\abs{d_X\left(\varphi_X(z),\varphi_X(z')\right)-d_Y\left(\varphi_Y(z),\varphi_Y(z')\right)}.$$  
	\begin{definition}[Gromov-Hausdorff distance {\cite[Section 7.3]{buragobook}}] \label{def:the GH} Let $(X,d_X)$ and $(Y,d_Y)$ be any two compact metric spaces. Then, 
		$$\dgh\left((X,d_X),(Y,d_Y)\right)=\frac{1}{2}\inf_R\ \dis(R)$$
		where the infimum is taken over all tripods $R$ between $X$ and $Y$. In particular, any tripod $R$ between $X$ and $Y$ is said to be an \emph{$\eps$-tripod} between $(X,d_X)$ and $(Y,d_Y)$ if $\dis(R)\leq \eps$.
	\end{definition}
	
	\begin{proposition}\label{prop:computation}Let $(X,d_X)$ and $(Y,d_Y)$ be any two finite metric spaces. Then, there exist two DGs $\dyngx$ and $\dyngy$ corresponding to $(X,d_X)$ and $(Y,d_Y)$ respectively such that $$\dintg(\dynG_X,\dynG_Y)=2\cdot \dgh\left((X,d_X),(Y,d_Y)\right).$$
	\end{proposition}
	
	\begin{proof}
Let $T$ be the diameter of $(X,d_X)$.		
		For $t\in \R$, we define:
	$$V_X(t):=\begin{cases}
		\emptyset,&t\notin[0,T]\\ X,&t\in[0,T],
		\end{cases}\hspace{2mm}E_X(t):=\begin{cases}
		\emptyset,&t\notin[0,T]\\ \{\{x,x'\}\subset X:  x\neq x' \mbox{ and } d_X(x,x')\leq t\},&t\in[0,T].
		\end{cases}$$ 
		We define $\dynG_X$ by $t\mapsto (V_X(t),E_X(t))$. Define $\dynG_Y$ similarly. We show that $\dintg(\dynG_X,\dynG_Y)\geq 2\cdot \dgh\left((X,d_X),(Y,d_Y)\right).$ To this end, suppose that for some $\eps\geq 0$, $\tripod$ is any $\eps$-tripod between $\dynG_X$ and $\dynG_Y$ (Definition \ref{def:DG interleaving distance}). Then, by the construction of $\dynG_X,\dynG_Y$, it must hold that $\abs{d_X\left(\varphi_X(z),\varphi_X(z')\right)-d_Y\left(\varphi_Y(z),\varphi_Y(z')\right)}\leq \eps$ for all $z,z'\in Z.$ The other inequality $\dintg(\dynG_X,\dynG_Y)\leq 2\cdot \dgh\left((X,d_X),(Y,d_Y)\right)$ can be similarly proved. 
	 \end{proof}
	 
	 \begin{definition}\label{def:ultrametric space}
	 \ok{An \emph{ultrametric space} is a metric space $(X,d)$ in which the following ultra-triangle inequality holds: for all $x,y,z\in X$, \[d(x,z)\leq \max \left\{d(x,y),d(y,z)\right\}.\]}
	\ok{If $(X,d)$ were a pseudometric, then $d$ is called an ultra-pseudometric.}
	 \end{definition}
	\paragraph{Proof of Theorem \ref{thm:DG metric-complexity}}\label{proof:DG-metric-complexity}
		Pick any two ultrametric spaces $(X,u_X)$ and $(Y,u_Y)$. Then, by Proposition \ref{prop:computation}, there exist DGs $\dyngx$ and $\dyngy$ such that the interleaving distance between $\dynG_X$ and $\dynG_Y$ is identical to twice the Gromov-Hausdorff distance $\Delta:=d_\mathrm{GH}((X,u_X),(Y,u_Y))$ between  $(X,u_X)$ and $(Y,u_Y)$. However, according to \cite[Corollary 3.8]{schmiedl2017computational}, $\Delta$ cannot be approximated within any factor less than $3$ in polynomial time, unless $P=NP$. The author shows this by observing that any instance of the $3$-partition problem can be reduced to an instance of the bottleneck $\infty$-Gromov-Hausdorff distance ($\infty$-BGHD) problem between ultrametric spaces (cf. \cite[p.865]{schmiedl2017computational}). The proof follows.	\qed
	
	\subsection{Details about Remark \ref{rem:complexity of df}}
	
	\begin{remark}[Interleaving between dendrograms]\label{rem:df motive} When $\theta_X,\theta_Y$ are dendrograms over sets $X$ and $Y$ respectively, let $\tripod$ be an $\eps$-tripod between $\theta_X$ and $\theta_Y$. Since both $\theta_X$ and $\theta_Y$ get coarser as $t\in\R$ increases,  the interleaving condition in Definition \ref{def:interleaving distance2} can be rewritten as follows: for all $t\in\R$ it holds that
		$\theta_X(t) \leq_{R} \theta_Y(t+\eps)$ and $\theta_Y(t) \leq_{R} \theta_X(t+\eps)$ (cf. Definition \ref{def:partition morphism}).	
	\end{remark}
	Let $X$ be a finite set and let $\theta_X$ be a dendrogram over $X$ (cf. Remark \ref{rem:about the definition of formigrams}). Recall from \cite{clustum} that this $\theta_X$ induces a canonical ultra-pseudometric $u_X:X\times X\rightarrow \R_+$ on $X$ (cf. Definition \ref{def:ultrametric space}) defined by 
	\begin{equation}\label{eq:ultra}
	    	u_X(x,x'):=\inf\{\eps\geq 0: \mbox{$x,x'$ belong to the same block of $\theta_X(\eps)$}\}
	\end{equation}

	\begin{proposition}\label{prop:ultra} Given any two dendrograms $\theta_X,\theta_Y$ over sets $X,Y$, respectively, let $u_X,u_Y$ be the canonical ultra-pseudometrics on $X$ and $Y$, respectively. Then,  $\dint^\mathrm{F}(\theta_X,\theta_Y)=2\ d_{\mathrm{GH}}((X,u_X), (Y,u_Y)).$
	\end{proposition}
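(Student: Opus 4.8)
The plan is to reduce the claimed identity to the definition $\dgh=\tfrac12\inf_R\dis(R)$ by proving that a tripod $\tripod$ is an $\eps$-tripod between the formigrams $\theta_X$ and $\theta_Y$ (Definition \ref{def:interleaving distance2}) if and only if it is an $\eps$-tripod between the ultrametric spaces $(X,u_X)$ and $(Y,u_Y)$ (Definition \ref{def:the GH}), i.e. $\dis(R)\le\eps$. Granting this equivalence, taking the infimum over all $\eps\ge 0$ and all tripods yields $\dintf(\theta_X,\theta_Y)=\inf_R\dis(R)=2\,\dgh((X,u_X),(Y,u_Y))$.

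The first step is a \emph{smoothing-as-shift} lemma: for a dendrogram $\theta_Y$ (extended by $\theta_Y(t):=\emptyset$ for $t<0$) and any $\eps\ge 0$, I claim that $(S_\eps\theta_Y)(t)=\theta_Y(t+\eps)$ for all $t\in\T$. This follows from monotonicity: since $\theta_Y(s)\le\theta_Y(t+\eps)$ for every $s\in[t]^\eps$ and $\theta_Y(t+\eps)$ itself lies in the family $\{\theta_Y(s):s\in[t]^\eps\}$, the finest common coarsening $\bigvee_{[t]^\eps}\theta_Y$ (Definition \ref{def:smoothing2}) equals $\theta_Y(t+\eps)$; the empty sub-partitions occurring when $t-\eps<0$ have empty underlying set and contribute nothing to the coarsening, so the edge cases all give $\theta_Y(t+\eps)$ as well. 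This confirms Remark \ref{rem:df motive}: writing $\varphi:=\varphi_Y\circ\varphi_X^{-1}$, the interleaving inequalities reduce to $\theta_X(t)\le_\varphi\theta_Y(t+\eps)$ and $\theta_Y(t)\le_{\varphi^{-1}}\theta_X(t+\eps)$ for all $t$.

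Next I would translate $\theta_X\xrightarrow{R}S_\eps\theta_Y$ into a pointwise condition on the ultrametrics, using the standard fact (from right-continuity and monotonicity of dendrograms) that $x,x'$ lie in the same block of $\theta_X(t)$ exactly when $u_X(x,x')\le t$. Unwinding $\theta_X(t)\le_\varphi\theta_Y(t+\eps)$ through its pairwise form (Remark \ref{def:morphism for formi}), it reads: for all $z,z'\in Z$, if $u_X(\varphi_X(z),\varphi_X(z'))\le t$ then $u_Y(\varphi_Y(z),\varphi_Y(z'))\le t+\eps$. As this must hold for every $t$, choosing $t=u_X(\varphi_X(z),\varphi_X(z'))$ shows it is equivalent to $u_Y(\varphi_Y(z),\varphi_Y(z'))-u_X(\varphi_X(z),\varphi_X(z'))\le\eps$ for all $z,z'\in Z$; conversely this inequality clearly recovers the condition for every $t$. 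The symmetric direction $\theta_Y\xrightarrow{R}S_\eps\theta_X$ gives $u_X-u_Y\le\eps$ on $Z\times Z$, and combining the two yields precisely $\dis(R)=\sup_{z,z'}\lvert u_X(\varphi_X(z),\varphi_X(z'))-u_Y(\varphi_Y(z),\varphi_Y(z'))\rvert\le\eps$, the desired equivalence.

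The main obstacle will be the bookkeeping in the partition-morphism reduction, specifically handling the fact that $\varphi(x)=\varphi_Y(\varphi_X^{-1}(x))$ is a multivalued image: I must verify that requiring $\varphi(x)$ and $\varphi(x')$ to lie in a single block of $\theta_Y(t+\eps)$ for every pair $x,x'$ sharing a block of $\theta_X(t)$ is genuinely equivalent to the pairwise condition ranging over all $z,z'\in Z$. The delicate case is $\varphi_X(z)=\varphi_X(z')$ (so $u_X=0$), where the pairwise condition forces $u_Y(\varphi_Y(z),\varphi_Y(z'))\le\eps$; this is exactly the distortion contribution of a collapsed pair and is what makes the slack $\eps$ appear uniformly, so I would treat it explicitly rather than fold it into the generic argument.
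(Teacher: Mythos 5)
Your proposal is correct and follows essentially the same route as the paper's proof: both hinge on the observation that for dendrograms the $\eps$-smoothing is just the shift $t\mapsto t+\eps$ (the paper's Remark \ref{rem:df motive}, which you prove explicitly as your ``smoothing-as-shift'' lemma) and on the translation ``$x,x'$ share a block of $\theta_X(t)$ iff $u_X(x,x')\leq t$,'' yielding the equivalence between $\eps$-interleaving of the formigrams and $\dis(R)\leq\eps$. Your explicit treatment of the collapsed-pair case $\varphi_X(z)=\varphi_X(z')$ is a sound piece of bookkeeping that the paper glosses over, but it does not change the argument.
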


	\begin{proof} We first show that the LHS $\geq$ the RHS. 
	Let $\eps\geq 0$ and let $\tripod$ be any $\eps$-tripod between the two dendrograms $\theta_X$ and $\theta_Y$.	Let $(x,y),(x',y')\in R$ and let $t:=u_X(x,x')$. This implies that $x,x'$ belong to the same block of the partition $\theta_X(t).$ Since $\theta_X(t)\leq_R \bigvee_{[t]^\eps}\theta_Y=\theta_Y(t+\eps)$,  $y$ and $y'$ must belong to the same block of $\theta_Y(t+\eps)$, and in turn this implies that $u_Y(y,y')\leq t+\eps=u_X(x,x')+\eps$. By symmetry, we also have $u_Y(y,y')\leq u_X(x,x')+\eps$ and in turn $\abs{u_X(x,x')-u_Y(y,y')}\leq \eps$. By Definition \ref{def:the GH}, this implies that $\dgh((X,u_X),(Y,u_Y))\leq \eps/2.$

	Next, we prove the opposite inequality. Let $\tripod$ be a tripod between $X$ and $Y$ such that $\mathrm{dis}(R)=\eps.$ it suffices to show that for all $t\in\R$, $\theta_X(t)\leq_R\theta_Y(t+\eps)$ and $\theta_Y(t)\leq_{R}\theta_X(t+\eps)$. By symmetry, we only prove that  $\theta_X(t)\leq_R\theta_Y(t+\eps)$ for all $t\in \T$.  For $t<0$, since $\theta_X(t)=\emptyset$,  $\theta_X(t)\leq_R\theta_Y(t+\eps)$ trivially holds. Now pick any $t\geq 0$ and pick any $(x,y),(x',y')\in R$.  Assume that $x,x'$ belong to the same block of $\theta_X(t),$ implying that $u_X(x,x')\leq t.$ Since $\abs{u_X(x,x')-u_Y(y,y')}\leq\eps$, we know $u_Y(y,y')\leq t+\eps,$ and hence $y,y'$ belong to the same block of $\theta_Y(t+\eps)$. Therefore, $\theta_X(t)\leq_R\theta_Y(t+\eps)$ for all $t\in\R$. \end{proof}

	\begin{theorem}[Complexity of computing $\dintf$]\label{thm:complexity-dIF} Fix $\rho\in(1,6)$. It is not possible to obtain a $\rho$ approximation to the distance $\dintf(\theta_X,\theta_Y)$ between formigrams in time polynomially depending on $|X|,|Y|,|\mathrm{crit}(\theta_X)|$, $|\mathrm{crit}(\theta_Y)|$ unless $P=NP$. 
	\end{theorem}

	\begin{proof}
		Pick any two dendrograms and invoke Proposition \ref{prop:ultra} to reduce the problem to the computation of the Gromov-Hausdorff distance between the ultra-pseudometric spaces associated to the dendrograms. The rest of the proof follows along the same lines as that of Theorem \ref{thm:DG metric-complexity}.
 \end{proof}
	
\subsection{Proof of Theorem \ref{thm:persistence clustergram of a saturated formigram}} \label{sec:persistence clustergram of a saturated formigram}
	Theorem \ref{thm:persistence clustergram of a saturated formigram} will directly follow from Theorem \ref{thm:zigzag barcode via Mobius inversion of full component function} below. 
	
 We explicitly represent the colimit of $M:\ZZ\rightarrow \sets$ as follows. For $k,l\in \ZZ$, assume that $x\in M{(k)}$ and $y\in M{(l)}$. We write $x\sim y$ if $k$ and $l$ are comparable and one of $x$ and $y$ is mapped to the other via the internal map between $M(k)$ and $M(l)$.  The colimit of $M$ is the pair $\left(C,(i_k)_{k\in\ZZ}\right)$ described as follows:
    \begin{equation}\label{eq:set colimit construction}
        C:=\left(\coprod_{k\in\ZZ}M(k)\right)\big/\approx,
    \end{equation}
    where $\approx$ is the equivalence relation generated by the relations $x_k\sim x_{l}$ for $x_k\in M(k)$ and $x_{l}\in M(l)$ with $k,l$ being comparable. For the quotient map $q:\coprod_{k\in\ZZ}M(k)\rightarrow C$, each $i_k$ is the composition $M_k\hookrightarrow \coprod_{k\in\ZZ}M(k) \stackrel{q}{\rightarrow} C$. \label{item:set-colimit construction}

Let $I\in \Int(\ZZ)$. For any functor $N:I\rightarrow \sets$, we can construct the limit and colimit of $N$ in the same way; namely, in the above description, replace $M$ and $\ZZ$ by $N$ and $I$, respectively. \textbf{In what follows, we use this explicit construction  whenever considering colimits of (interval restrictions of) $\ZZ$-indexed $\sets$-diagrams.} 

\begin{definition}\label{def:full component} Let $I\in \Int(\ZZ)$ and let $N:I\rightarrow \sets$ by any functor. Let $c\in \varinjlim N$. We define the \emph{support} of $c$ as
\[\supp(c):=\{k\in I:  \exists x_k\in N_k, \ i_k(x_k)=c\}.\]
In particular, if $\supp(c)=I$, we call $c$ a \emph{full component} of the functor $N$. 
\end{definition}

Given $M:\ZZ\rightarrow \sets$ and $I\in \Int(\ZZ)$, we denote the number of \emph{full components} of $M|_I$ by $\full(M|_I)$. 	Recall Notation \ref{not:extended intervals of zz}.

	\begin{theorem}[{\cite[Corollary 4.10]{kim2018generalized}}]\label{thm:zigzag barcode via Mobius inversion of full component function}
	For any functor $M:\ZZ\rightarrow \sets$, the multiplicity of $I$ in $\B(\free\circ M)$ is\[\full(M|_I)-\full(M|_{I^+})-\full(M|_{I^-})+\full(M|_{I^\pm}).\]
	\end{theorem}

\begin{proof}[Proof of Theorem \ref{thm:persistence clustergram of a saturated formigram}]
By Proposition \ref{prop:silhouette commutes with mobius}, for every $I\in \Int(\ZZ)$, \[\abs{\dgm^{\veewedge}(\theta_X)}(I)=\abs{\veewedge_I\theta_X}-\abs{\veewedge_{I^+}\theta_X}-\abs{\veewedge_{I^-}\theta_X}+\abs{\veewedge_{I^{\pm}}\theta_X}.\] 
Therefore, by Theorem \ref{thm:zigzag barcode via Mobius inversion of full component function}, it suffices to show that $\abs{\veewedge_J\theta_X}=\full(\reeb(\theta_X)|_J)$ for all $J\in \Int(\ZZ)$. If $J\in \Int(\ZZ)$ is not a subset of $\supp(\theta_X)$, then clearly $0=\abs{\veewedge_J\theta_X}=\full(\reeb(\theta_X)|_J)$. Now assume that $J\in\Int(\ZZ)$ is contained in $\supp(\theta_X)$. Then, since $\theta_X$ is saturated, $\veewedge_J \theta_X=\bigvee_J \theta_X$. Also, $\abs{\bigvee_J\theta_X}$ is equal to the number of full components of $\reeb(\theta_X)|_J$, completing the proof.  
\end{proof}


\subsection{From unlabeled formigrams to persistent cluster counting functors}\label{sec:from weigted reeb to persistent cluster counting}

\ok{Let $\theta_X$ be a formigram. We shall prove that the persistent counting functor $\abs{\veewedge \theta_X}$ (cf. Definition \ref{def:persistent cluster counting functor}) can be obtained from the unlabeled formigram of $\theta_X$ (cf. Definition \ref{def:underlying} \ref{item:unlabeled formigram}). 

\begin{proposition}\label{prop:from weigted reeb to persistent cluster counting} Let $\theta$ be the unlabeled formigram of $\theta_X$. For any $I\in \Int(\ZZ)$, consider the canonical limit-to-colimit morphism $\varphi_I:\varprojlim \theta|_I\rightarrow \varinjlim \theta|_I$ in the category $\Part$. Then, $\coim(\varphi_I)\cong (\bigcap_{t\in I} \theta_X(t), \veewedge_I \theta_X)$.
\end{proposition}
\begin{proof} By Proposition \ref{prop:unlabeling preserves limits and colimits}, $\varprojlim \theta|_I\cong \bigwedge_I\theta_X$ and  $\varinjlim \theta|_I\cong \bigvee_I\theta_X$, and the morphism $\varphi_I$ in $\Part$ is the inclusion $\bigcap_{t\in I}\bigcup \theta_X(t)\hookrightarrow \bigcup_{t\in I}\bigcup\theta_X(t)$. Now by Proposition \ref{prop:image and coimage} \ref{item:coimage} the desired isomorphism follows.
\end{proof}
Proposition \ref{prop:from weigted reeb to persistent cluster counting} implies that we can extract $\abs{\veewedge \theta_X}$ from $\theta$: Namely,  $\abs{\veewedge \theta_X}(I)=\abs{\veewedge_I \theta_X}$ equals the number of blocks in the second entry of $\coim(\varphi_I)$. 
Reciprocally, one may wonder whether $\abs{\veewedge \theta_X}$ contains enough information to reconstruct $\theta$. That is not true; there exists a pair of formigrams which have identical persistent cluster functor, whereas their underlying weighted/unweighted Reeb graphs are different. This implies that their unlabeled formigrams are also different.
}

	\subsection{Details from Section \ref{sec:DMSs}}\label{sec:details on DMSs}
	
	\subsubsection{Details from Section \ref{sec:from DMSs to DGs}}
	
	\begin{proof}[Proof of Proposition \ref{prop:from dms to filt2}]\label{proof:from dms to filt2}

 	Clearly, $\rips^1(\gamma_X)$ is a function $\R\rightarrow \graph(X)$.  
 	We show that $\rips^1(\gamma_X)$ is cosheaf-inducing (Definition \ref{def:cosheaf conditions}). First we prove that locally $\rips^1(\gamma_X)$ admits only finitely many points of discontinuity (those points are called critical points). Let $I\subset\T$ be any nonempty finite interval.
		For $i,j\in X:=\{1,\ldots,n\}$, let $f_{i,j}:=d_X(\cdot)(i,j):\T\rightarrow\R_+$. Note that  discontinuity points of $\rips^1(\gamma_X)$ can  occur only at  endpoints of  connected components of the set ${f_{i,j}}^{-1}(\delta)$ for some $i,j\in X$. Fix any $i,j\in X$. Then, by Definition \ref{def:tameDMS}, the set ${f_{i,j}}^{-1}(\delta)\cap I$ has only finitely many connected components and thus there are only finitely many endpoints arising from those components. Since the set $X$ is finite, this implies that $\rips^1(\gamma_X)$ can have only finitely many points of discontinuity in $I$.   
 Fix any point $c\in \T$ on which $\rips^1(\gamma_X)$ is discontinuous. Consider the following two subsets of $X\times X$: $$A(c,\delta):=\{(i,j):\ i<j\in X,\ d_X(c)(i,j)\leq \delta \},$$ $$B(c,\delta):=\{(i,j):\ i<j\in X,\ d_X(c)(i,j)>\delta\}.$$
		The continuity of $d_X(\cdot)(i,j)$ for each $(i,j)\in X\times X$ guarantees that there exists $\eps>0$ such that $$B(t,\delta)\supset B(c,\delta)\hspace{3mm} \mbox{for all}\ t\in (c-\eps,c+\eps)$$ and in turn $$A(t,\delta)\subset A(c,\delta)\hspace{3mm} \mbox{for all}\ t\in (c-\eps,c+\eps)$$since $A(t,\delta)\cup B(t,\delta)=\{(i,j):i<j\in X\}$ for all $t\in \T.$
		This implies that the graph $\rips^1(\gamma_X(c))$ contains $\rips^1(\gamma_X(t))$ as a subgraph for each $t\in (c-\eps,c+\eps)$.
		
	\end{proof}
	
	\subsubsection{Details from Section \ref{sec:distance between DMSs}}\label{sec:details on distance between DMSs}

	\medskip\noindent\textbf{Details about $\dintl$.} We investigate further properties of the metrics in the family $\left\{\dintl\right\}_{\lambda\in[0,\infty)}$. In particular, a discussion about stable invariants of DMSs with respect to the metrics $\dintl$ for $\lambda>0$ can be found in \cite{kim2020spatiotemporal}.

	\begin{remark}\label{rem:bounded} Let $\lambda>0$. The distance $\dintl$ between any two bounded DMSs is finite. More specifically, for any $r$-bounded DMSs $\gamma_X=(X,d_X(\cdot))$ and $\gamma_Y=(Y,d_Y(\cdot))$ for some $r>0$, any tripod $R$ between $X$ and $Y$ is a $(\lambda, \frac{r}{\lambda})$-tripod between $\gamma_X$ and $\gamma_Y$. This implies that  $$\dintl(\gamma_X,\gamma_Y)\leq \frac{r}{\lambda}.$$\label{item:bounded2}	
	\end{remark}

	\begin{definition}[Equivalent tripods]\label{def:equivalent tripods} Let $X,Y$ be any two sets. For any two tripods $\tripod$ and $S:\ X \xtwoheadleftarrow{\psi_X} Z \xtwoheadrightarrow{\psi_Y} Y$ between $X$ and $Y$, we say that $R$ and $S$ are \emph{equivalent} if $(x,y)\in R$ if and only if $(x,y)\in S$.
	\end{definition}

	\begin{remark}\label{rem:equivalent tripod}Let $\gammax$ and $\gammay$ be any two DMSs. Suppose that $R$ and $S$ are equivalent tripods between $X$ and $Y$ (Definition \ref{def:equivalent tripods}). Then, it is not difficult to check that for any $\lambda,\eps\geq 0$, $R$ is a $(\lambda,\eps)$-tripod between $\gamma_X$ and $\gamma_Y$ if and only if $S$ is a $(\lambda,\eps)$-tripod  between $\gamma_X$ and $\gamma_Y$.
	\end{remark}

	\begin{proof}[Proof of Theorem \ref{thm:lambda metric}]
	We prove the triangle inequality. 
		Take any DMSs $\gamma_X,\gamma_Y$ and $\gamma_W$ over $X$,$Y$ and $W$, respectively. For some $\eps,\eps'>0$, let $\tripodone$ and $\tripodtwo$ be any $(\lambda,\eps)$-tripod between $\gamma_X$ and $\gamma_Y$ and $(\lambda,\eps')$-tripod between $\gamma_Y$ and $\gamma_W$  (Definition \ref{def:distortion}), respectively. Consider the set $Z:=\left\{(z_1,z_2)\in Z_1\times Z_2:\varphi_Y(z_1)=\psi_Y(z_2)\right\}$ and let $\pi_1:Z\rightarrow Z_1$ and $\pi_2:Z\rightarrow Z_2$ be the canonical projections to the first and the second coordinate, respectively. Define the tripod $R_2\circ R_1$ between $X$ and $W$ as in equation (\ref{eq:comosition}). It is not difficult to check that $R_2\circ R_1$ is a $(\lambda,\eps+\eps')$-tripod between $\gamma_X$ and $\gamma_W$ and thus we have $\dintl(\gamma_X,\gamma_W)\leq \dintl(\gamma_X,\gamma_Y)+ \dintl(\gamma_Y,\gamma_W).$ 
		
		Next assume that $\dintl(\gamma_X,\gamma_Y)=0$. We outline the proof of the fact that $\gamma_X$ and $\gamma_Y$ are isomorphic  (Definition \ref{def:isomorphism}). Because there are only finitely many tripods between $X$ and $Y$ up to equivalence (Definition \ref{def:equivalent tripods}), $\dintl(\gamma_X,\gamma_Y)=0$ implies that there must be a certain tripod $\tripod$ between $X$ and $Y$ such that $R$ becomes an $(\lambda,\eps)$-tripod between $\gamma_X$ and $\gamma_Y$ for \emph{any} $\eps>0$.  In order to show that $\gamma_X$ and $\gamma_Y$ are isomorphic, one needs to prove that that $R$ is in fact $(\lambda,0)$-tripod. After that, invoke Definition \ref{dynamic}, \ref{item:dynamic2} and \ref{item:dynamic3} to verify that the multivalued map $\varphi_Y\circ\varphi_X^{-1}:X\rightrightarrows Y$ is in fact a bijection from $X$ to $Y$. 
		
		\ok{Lastly}, by Remark \ref{rem:bounded}, for $\lambda>0$, $\dintl$ is finite between bounded DMSs. 
	\end{proof}
	
	\begin{remark}[For $\lambda>0$, $\dintl$ generalizes the Gromov-Hausdorff distance]\label{rem:gromov} Let $\lambda>0 $. Given any two constant DMSs $\gamma_X\equiv (X,d_X)$ and $\gamma_Y\equiv (Y,d_Y)$, the value $\dintl(\gamma_X,\gamma_Y)$ equals the Gromov-Hausdorff distance between $(X,d_X)$ and $(Y,d_Y)$ up to multiplicative constant $\frac{\lambda}{2}$. Indeed, for any tripod $\tripod$ between $X$ and $Y$, condition  (\ref{eq:distor}) reduces to 
		$$\abs{d_X\left(\varphi_X(z),\varphi_X(z')\right)-d_Y\left(\varphi_Y(z),\varphi_Y(z')\right)}\leq \lambda\eps \ \ \mbox{for all $z,z'\in Z$}.$$ Therefore, $$\dgh((X,d_X),(Y,d_Y))=\frac{\lambda}{2}\cdot\dintl(\gamma_X,\gamma_Y).$$	
	\end{remark}

	We have the following bilipschitz-equivalence relation between the metrics $\dintl$ for different $\lambda>0$.
	
	\begin{proposition}[Bilipschitz-equivalence]\label{prop:equivalence} For all $0<\lambda<\lambda'$,
		$$d_{\mathrm{I},\lambda'}^\mathrm{dyn}\leq \dintl\leq \frac{\lambda'}{\lambda}\cdot d_{\mathrm{I},\lambda'}^\mathrm{dyn}.$$
	\end{proposition}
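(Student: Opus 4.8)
The plan is to establish the two inequalities separately, both as consequences of a single elementary comparison of the defining $(\lambda,\eps)$-tripod condition (Definition \ref{def:distortion}, condition \eqref{eq:distor}) as the slack parameter varies. The entire argument reduces to comparing, for a \emph{fixed} tripod $R$ between $X$ and $Y$, the distortions $\dyndis_\lambda(R)$ and $\dyndis_{\lambda'}(R)$; passing from such per-tripod estimates to the distances $\dintl$ and $d_{\mathrm{I},\lambda'}^\mathrm{dyn}$ is then merely a matter of taking infima over all tripods.

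For the left-hand inequality $d_{\mathrm{I},\lambda'}^\mathrm{dyn}\leq\dintl$, first I would observe that every $(\lambda,\eps)$-tripod is automatically a $(\lambda',\eps)$-tripod. Indeed, since $\lambda<\lambda'$ and $\eps\geq 0$ we have $\lambda\eps\leq\lambda'\eps$, hence $d_Y(t)+\lambda\eps\leq d_Y(t)+\lambda'\eps$ pointwise, so the inequality $\bigvee_{[t]^\eps}d_X\leq_R d_Y(t)+\lambda\eps$ of \eqref{eq:distor} implies $\bigvee_{[t]^\eps}d_X\leq_R d_Y(t)+\lambda'\eps$, and symmetrically with the roles of $X$ and $Y$ exchanged. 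Thus the set of $\eps$ witnessing a $(\lambda,\eps)$-tripod is contained in the set witnessing a $(\lambda',\eps)$-tripod, which gives $\dyndis_{\lambda'}(R)\leq\dyndis_\lambda(R)$ for every $R$; taking the infimum over all tripods then yields the claim.

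For the right-hand inequality $\dintl\leq\frac{\lambda'}{\lambda}\,d_{\mathrm{I},\lambda'}^\mathrm{dyn}$, I would rescale the temporal parameter. Given a $(\lambda',\eps)$-tripod $R$, set $\eps'':=\frac{\lambda'}{\lambda}\eps\geq\eps$. Since $[t]^\eps\subseteq[t]^{\eps''}$, the minimum defining $\bigvee_{[t]^{\eps''}}d_X$ runs over a larger interval, so $\bigvee_{[t]^{\eps''}}d_X\leq\bigvee_{[t]^\eps}d_X$ pointwise; moreover $\lambda\eps''=\lambda'\eps$. Chaining these facts gives $\bigvee_{[t]^{\eps''}}d_X\leq\bigvee_{[t]^\eps}d_X\leq_R d_Y(t)+\lambda'\eps=d_Y(t)+\lambda\eps''$ (and symmetrically), so $R$ is a $(\lambda,\eps'')$-tripod. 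Invoking the monotonicity of the $(\lambda,\cdot)$-tripod condition in its second argument (the Remark following Definition \ref{def:distortion}) to let $\eps$ decrease to $\dyndis_{\lambda'}(R)$, I would conclude $\dyndis_\lambda(R)\leq\frac{\lambda'}{\lambda}\dyndis_{\lambda'}(R)$ for every $R$, and again take the infimum over tripods.

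I do not anticipate a substantive obstacle: each direction rests only on (i) the monotonicity of $I\mapsto\bigvee_I d_X$ under interval inclusion and (ii) the trivial estimate $\lambda\eps\leq\lambda'\eps$. The only point deserving slight care is the bookkeeping in passing from the per-tripod bound $\dyndis_\lambda(R)\leq\frac{\lambda'}{\lambda}\dyndis_{\lambda'}(R)$ to the stated bound on the distances, namely checking that the positive constant $\frac{\lambda'}{\lambda}$ commutes with the infimum over tripods and that working with $\eps$ strictly larger than the distortion does not affect the resulting infimum.
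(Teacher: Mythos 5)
Your proposal is correct and follows essentially the same route as the paper: the left inequality via the observation that any $(\lambda,\eps)$-tripod is a $(\lambda',\eps)$-tripod, and the right inequality by showing that a $(\lambda',\eps)$-tripod is a $(\lambda,\frac{\lambda'}{\lambda}\eps)$-tripod using the monotonicity of $I\mapsto\bigvee_I d_X$ and the identity $\lambda\cdot\frac{\lambda'}{\lambda}\eps=\lambda'\eps$. The only cosmetic difference is that you spell out the passage from per-tripod distortion bounds to the distances, which the paper leaves implicit.
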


	\begin{proof}\label{proof:equivalence} Fix any two DMSs $\gamma_X$ and $\gamma_Y$ over $X$ and $Y$. That  $\dintll(\gamma_X,\gamma_Y)\leq \dintl(\gamma_X,\gamma_Y)$ follows from the observation that any $(\lambda,\eps)$-tripod $R$ between $\gamma_X$ and $\gamma_Y$ is also a $(\lambda',\eps)$-tripod (Definition \ref{def:distortion}).
		We next prove $\dintl(\gamma_X,\gamma_Y)\leq \frac{\lambda'}{\lambda}\cdot d_{\mathrm{I},\lambda'}^\mathrm{dyn}(\gamma_X,\gamma_Y).$ For some $\eps\geq 0$ let $R$ be any $(\lambda',\eps)$-tripod between $\gamma_X$ and $\gamma_Y$. It suffices to show that $R$ is also a $(\lambda,\frac{\lambda'}{\lambda}\eps)$-tripod.
		Fix any $t\in T.$ Then,
		$$\bigvee_{[t]^{\left(\frac{\lambda'}{\lambda}\eps\right)}}d_X\leq \bigvee_{[t]^{\eps}}d_X\leq_R d_Y(t)+\lambda'\eps=d_Y(t)+\lambda\left(\frac{\lambda'}{\lambda}\eps\right).$$
		By symmetry, we also have$\bigvee_{[t]^{\left(\frac{\lambda'}{\lambda}\eps\right)}}d_Y\leq_{R}d_X(t)+\lambda\left(\frac{\lambda'}{\lambda}\eps\right),$	as desired.
 \end{proof}
	
\section{Distance between weighted Reeb graphs}\label{sec:distance between weighted reeb graphs}

\ok{In this section we introduce a distance between weighted Reeb graphs which mediates between $\dintf$ and $\dintreeb$ (cf. Definition \ref{def:reeb graph as a cosheaf} and Table \ref{table:names of cosheaves}).}

\begin{proposition}[Realization as an unlabeled formigram] \label{prop:realization}
For any weighted Reeb graph $F:\Int\rightarrow \wsets$, there exists an unlabeled formigram $\theta:\Int\rightarrow \Partin$ such that $F\cong \A\circ \theta$.\label{item:realization 1}
    \label{item:realization 2}
\end{proposition}

 \ok{The proof is rather trivial and thus we omit it. In general, a realization of a weighted Reeb graph as an unlabeled formigram is not unique; see Example \ref{ex:same underlying graph}.} \ok{ Proposition \ref{prop:realization} allows us to define the following dissimilarity measure on weighted Reeb graphs. From equation (\ref{eq:distance between unlabeled formigrams}), recall how to define $\dintf$ between unlabeled formigrams. For all weighted Reeb graphs $F,G:\Int\rightarrow \wsets$, we define:}
\begin{equation}\label{eq:W}
    W(F,G):=\inf\{\eps\in[0,\infty]: \mbox{there exist $\theta,\theta':\Int\rightarrow \Part$ s.t.  $F\cong \A\circ\theta$, and  $G\cong \A\circ \theta',\ \dintf(\theta,\theta')=\eps$}\}. 
\end{equation}

\ok{Since a realization of a weighted Reeb graph as an unlabeled formigram is not necessarily unique, we have to possibly take into account multiple realizations of $F$ and $G$ to compute $W(F,G)$.} \ok{This leads to the fact that $W$ does not necessarily satisfy the triangle inequality and thus we consider the \emph{maximal sub-dominant metric} of $W$ \cite{carlsson2013classifying} as a metric on weighted Reeb graphs: }

\begin{definition}[Metric on weighted Reeb graphs]\label{def:distance between weighted Reeb graphs} For any two weighted Reeb graphs $F,G:\Int\rightarrow \wsets$,
\[\dintwreeb(F,G):=\inf \left\{\sum_{i=0}^{m-1}W(F_i,F_{i+1}):\ F=F_1,\ldots,F_m=G \ \mbox{is a sequence in $\Int^{\wsets}$}\right\}.\]
\end{definition}

 $\dintwreeb$ is the greatest metric on weighted Reeb graphs among those upper bounded by $W$.  $\dintwreeb$ mediates between $\dintf$ and $\dintreeb$:

\begin{theorem}\label{thm:dintwreeb mediates} 
For any two formigrams $\theta_X$ and $\theta_Y$, let $\omega(\theta_X)$ and $\omega(\theta_Y)$ be their weighted Reeb graphs. Then,
\begin{equation}\label{eq:mediates}
    \dintreeb(\reeb(\theta_X),\reeb(\theta_Y))\leq \dintwreeb(\omega(\theta_X),\omega(\theta_Y)) \leq \dintf(\theta_X,\theta_Y).
\end{equation}
\end{theorem}

\begin{proof}
From the definition of $\dintreeb$ and Proposition \ref{prop:formigram interleaving is mor discriminative than the reeb interleaving}, we have that \[\dintreeb(\reeb(\theta_X),\reeb(\theta_Y))\leq W(\omega(\theta_X),\omega(\theta_Y)).\] Since $\dintwreeb$ is the greatest metric on weighted Reeb graphs among those upper bounded by $W$, the left inequality in (\ref{eq:mediates}) follows.  The right inequality in (\ref{eq:mediates}) directly follows from the definition of $\dintwreeb$.
\end{proof}

$\dintwreeb$ is strictly less discriminative than $\dintf$ whereas strictly more discriminative than $\dintreeb$: 
\begin{example}\label{ex:dintwreeb} 
\begin{enumerate}[label=(\roman*)]
    \item Consider $\theta_X$ and $\theta_Y$ in Example \ref{ex:same underlying graph}. Since the underlying weighted Reeb graphs of $\theta_X$ and $\theta_Y$ are isomorphic, their distance in $\dintwreeb$ is zero.  However, by Remark \ref{rem:df generalizes dgh} \ref{item:df generalizes dgh1}, we have that $\dintf(\unlabel\circ\theta_X,\unlabelY\circ\theta_Y)=\dintf(\theta_X,\theta_Y)>0$.\label{item:dintwreeb1}
    \item Let $F$ and $G$ be weighted Reeb graphs depicted in Figure \ref{fig:two weighted reebs}. Their unweighted Reeb graphs $\A\circ F$ and $\A\circ G$ are clearly isomorphic and thus $\dintreeb(\A\circ F,\A\circ G)=0$. On the other hand, $\dintwreeb(F,G)= 1/2;$ this follows from the observation that both $F$ and $G$ are uniquely realized (up to natural isomorphism) by unlabeled formigrams $\theta$ and $\theta'$, which leads to $\dintwreeb(F,G)=\dintf(\theta,\theta')$. Also, it is not difficult to check that $\dintf(\theta,\theta')=1/2$. \label{item:dintwreeb2}      
\end{enumerate} 
\end{example}

\begin{figure}
    \centering
    \includegraphics[width=0.45\textwidth]{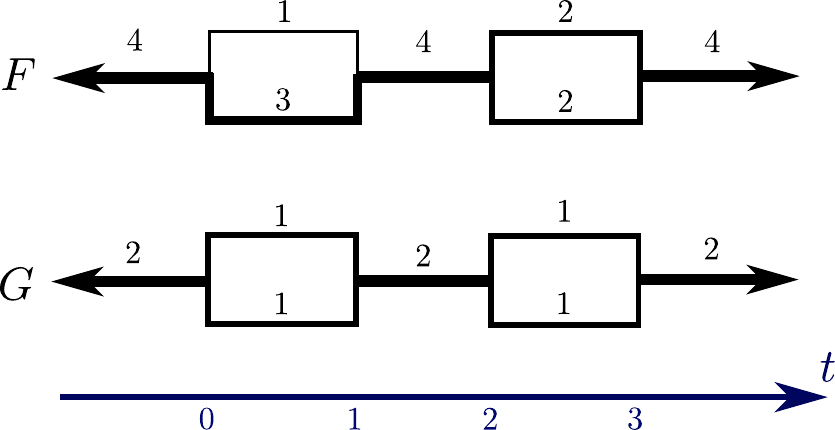}
    \caption{Two weighted Reeb graphs in Example \ref{ex:dintwreeb} \ref{item:dintwreeb2}.}
    \label{fig:two weighted reebs}
\end{figure}

	\section{Smoothing formigrams}\label{sec:smoothing}
\ok{The goal of this section is to establish a few basic properties of smoothing of formigrams. In particular, we reveal its effect on the zigzag barcodes of formigrams and its compatibility with smoothing of Reeb graphs in \cite{de2016categorified}; see Propositions \ref{prop:barcode} and \ref{prop:compatibility}.}

Recall that a formigram $\theta_X$ will be regarded as either a \cosheaf{} function $\R\rightarrow\subpart(X)$ or a constructible cosheaf $\Int\rightarrow \subpart(X)$ (cf. Definition \ref{def:formigram}, Remark \ref{rem:join-semilattice cosheaf} \ref{item:join-semilattice cosheaf1} and \ref{item:join-semilattice cosheaf2}). 
 By Definition \ref{def:cosheaf smoothing} and Proposition \ref{prop:smooting costructible yields constructible},  a smoothing operation on formigrams can be induced  via the join operation on subpartitions.
 Namely, $S_\eps\theta_X$ sends each $I\in\Int$ to $\bigvee_{I^\eps}\theta_X:=\bigvee\{\theta_X(t):t\in I^\eps\}$.

 \begin{remark}[Comparison with robust grouping structure]\label{rem:difference from Buchin's smoothing}
 The use of the join operation is an important element that distinguishes our notion of smoothing from the \emph{robust grouping structure} in \cite{buchin2013trajectory}. In particular, given a dynamic metric space (DMS), the induced formigram of this DMS (which is obtained by combining Definition \ref{def:from DG to formi} and Proposition \ref{prop:from dms to filt2}) can be smoothed out using the join operation. We emphasize that this smoothing operation is \emph{intrinsic} in contrast to the robust grouping structure from \cite{buchin2013trajectory}. Namely, our smoothing operation can be carried out  \emph{without} constructing any topological space in the spatiotemporal ambient space of the DMS as illustrated in \cite[Figure 11]{buchin2013trajectory}. 
 One consequence of this `intrinsicality' is that, when a dynamic \emph{graph} is the input data (as opposed to a dynamic \emph{metric space}), we can smooth out its induced formigram (cf. Definition \ref{def:from DG to formi}), while \cite{buchin2013trajectory} does not propose such a method. Since the coordinates of entities are not always available in applications (e.g. sensor networks \cite{de2006coordinate,de2007homological}, low-cost swarm robots \cite{rubenstein2012kilobot}, etc.),  \ok{this intrinsicality is a desirable property.}

 \end{remark}

Given a formigram $\theta_X$, Figure \ref{fig:smoothing} illustrates both the relationship between $\reeb(\theta_X)$ and $\reeb(S_\eps\theta_X)$ and the relationship between their zigzag barcodes. The following proposition precisely describes the relationship between $\barc(\theta_X)$ and $\barc(S_\eps\theta_X)$. 
For any $r\in \R$, we define $-\infty+r$ to be $-\infty$.

	\begin{figure}
		\begin{center}
			\includegraphics[width=\textwidth]{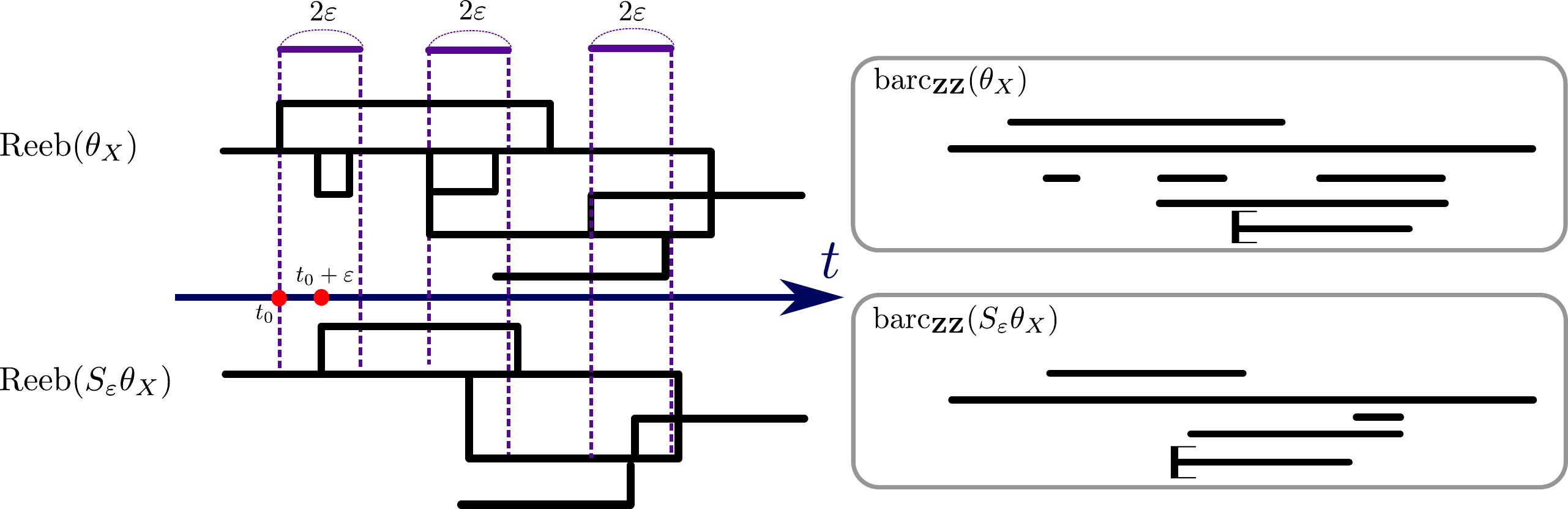}
		\end{center}
		\caption{\label{fig:smoothing}An illustration for Proposition \ref{prop:barcode}. \textbf{Top}: The Reeb graph of a formigram $\theta_X$ and its barcode. 
			\textbf{Bottom}: The Reeb graph of the formigram $S_\eps\theta_X$ and its barcode. Small loops in $\reeb(\theta_X)$ disappear in $\reeb(S_\eps\theta_X)$. In the barcodes, bars with ``$[$" on the left stand for half-closed intervals of the form $[a,b)$. Open intervals in $\barc(\theta_X)$ that are shorter than $2\eps$ do not have corresponding intervals in $\barc(S_\eps\theta_X)$.     
			Also, disbanding and merging events in $\theta_X$ \emph{which do not correspond to vertices on small loops in $\reeb(\theta_X)$} are replicated in $S_\eps\theta_X$: disbanding events in $\theta_X$ are reflected in $S_\eps\theta_X$ but with delay  $\eps$, whereas  merging events in $\theta_X$ are advanced by $\eps$. For example, observe from the graphs $\reeb(\theta_X)$ and $\reeb(S_\eps\theta_X)$ that the disbanding event in $\theta_X$ at $t=t_0$ is delayed to $t=t_0+\eps$ in $S_\eps\theta_X$.} 
	\end{figure}

	\begin{proposition}\label{prop:barcode} Let $\theta_X$ be a formigram over $X$ and let $\eps\geq 0$. Then, we have the following bijection between $\barc(\theta_X)$ and $\barc(S_\eps\theta_X)$ (cf. Figure \ref{fig:smoothing}):
		\begin{equation}\label{eq:effect}
		\begin{array}{cccl}	\barc(\theta_X)&&\barc(S_\eps\theta_X)&\\
		\cline{1-3}
		
		{(b,d)}&\leftrightarrow&(b+\eps,d-\eps)&\mbox{for $-\infty\leq b\leq b+\eps< d-\eps\leq +\infty$}\\
		
		{(b,d)}&\leftrightarrow&\mathrm{Nothing}&\mbox{for $b< d<b+2\eps$}.
		\\	
		{[b,d)}&\leftrightarrow&[b-\eps,d-\eps)&
		\\	
		{(b,d]}&\leftrightarrow&(b+\eps,d+\eps]&
		\\	
		
		{[b,d]}&\leftrightarrow&[b-\eps,d+\eps]&
		\end{array}
		\end{equation}
		Recall the free functor $\free:\sets\rightarrow \vect$ (cf. Definition \ref{def:free}) and the fact that any constructible cosheaf $\Int\rightarrow \vect$ is interval decomposable (cf. Proposition \ref{prop:interval decomposability of constructible cosheaf}).
\begin{lemma}\label{lem:no weird block}
        Given any constructible cosheaf $F:\Int\rightarrow \sets$, the barcode of $\free\circ F:\Int\rightarrow \vect$ cannot include any interval of the form $[b,a]_{\mathrm{BL}}:=\{(x,y)\in \U: x\leq a <b\leq y\}$ for $a<b$ in $\R$ (cf. Figure \ref{fig:block}).
\end{lemma}

\begin{proof}
Since $F$ is constructible is defined via colimits over restrictions of a zigzag diagram over $\R$ (cf. Definition \ref{def:constructible cosheaf}), for any $J\in \Int$ and for any $x\in F(J)$, there exist $t\in J$ and $y\in F([t,t])$ such that $F([t,t]\subset J)(y)=x$. This property directly implies that the interval module $I^{[b,a]_{\mathrm{BL}}}$ cannot be a summand of $\free\circ F$. 
\end{proof}

\begin{proof}[Proof of Proposition \ref{prop:barcode}] By Definitions \ref{def:zigzag barcode} and \ref{def:cosheaf smoothing}, $\barc(S_\eps\theta_X)$ is equal to the multiset
\[\lmulti B\cap \R_{y=x+2\eps}: B\in \mathrm{barc}(\free\circ F)\rmulti. \]
where $\R_{y=x+2\eps}$ is the line $y=x+2\eps$ identified with the real line via the bijection $(r-\eps,r+\eps)\leftrightarrow r$. The table (\ref{eq:effect}) is directly obtained by Lemma \ref{lem:no weird block} and  the \emph{block decomposability} of $\free\circ \emb \circ \theta_X$ \cite[Section 3]{botnan2018algebraic}.
\end{proof}

\begin{figure}
    \centering
    	\includegraphics[width=0.25\textwidth]{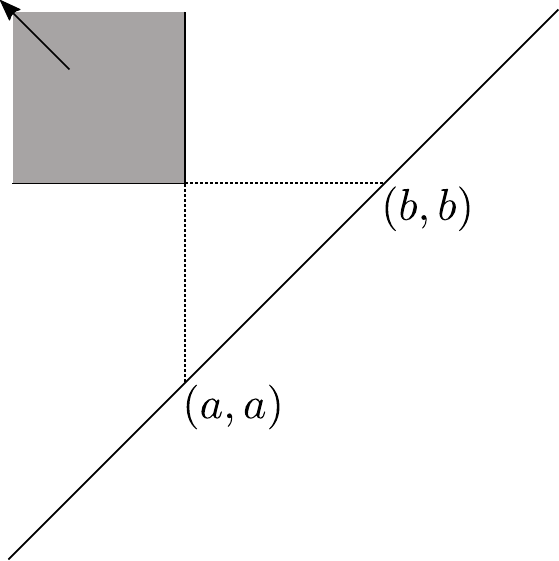}
    \caption{An illustration of $[b,a]_{\ZZ}$ for $a<b$ in $\Z$.}
    \label{fig:block}
\end{figure}

	\end{proposition}

	The bijective correspondence of barcodes given in Proposition \ref{prop:barcode} directly implies the following: 
	
	\begin{corollary}\label{cor:bott-eps} Let $\theta_X$ be any formigram over $X$. Then, for $\eps\geq 0$, $$\bott\left(\barc\left(S_\eps\theta_X\right),\ \barc\left(\theta_X\right)\right)\leq \eps.$$
	\end{corollary}
	
The smoothing operations defined for formigrams and Reeb graphs  (cf. Definition \ref{def:cosheaf smoothing}) are compatible in the following sense: 
	
	\begin{proposition}\label{prop:compatibility} 
	Let $\theta_X$ be a formigram over $X$. Then, for any  $\eps\geq 0$, $$\reeb(S_\eps\theta_X)=S_\eps\reeb(\theta_X).$$ 
	\end{proposition} 
	
	\begin{proof}
	Let $I\in \Int$. We have:
	\begin{align*}
	    	\reeb(S_\eps\theta_X)(I)&=\left(\emb\circ (S_\eps\theta_X) \right)(I)&\mbox{by Definitions \ref{def:three functors} and \ref{def:forget from subpart to set}}\\&=\emb\circ \left(S_\eps\theta_X\right) (I)&\mbox{by Proposition \ref{prop:colimit}}\\&=\emb\circ \theta_X (I^\eps)&\mbox{by Definition \ref{def:cosheaf smoothing}}\\&= \left(\emb\circ \theta_X \right) (I^\eps)&\mbox{by Proposition \ref{prop:colimit}}
	    	\\&=S_\eps \reeb(\theta_X)(I).
	\end{align*}
	\end{proof}
	
Formigrams change in a continuous manner under $\eps$-smoothing:	
	
	\begin{proposition}\label{prop:continuity} For any $\eps\geq 0$ and any formigram $\theta_X$,
			$$\dintf\left(S_\eps\theta_X,\theta_X\right)\leq \eps.$$ 
	\end{proposition}
	
	\begin{proof} Consider the tripod $R: X\xtwoheadleftarrow[]{\mathrm{id}_X}X \xtwoheadrightarrow[]{\mathrm{id}_X} X$ and check that $R$ is an $\eps$-tripod between $S_\eps\theta_X$ and $\theta_X$. \end{proof}
	
	The following proposition is analogous to \cite[Proposition 4.14]{de2016categorified}:
	
	\begin{proposition}\label{prop:contrations}For any $\eps\geq 0$,  $S_\eps$ is a contraction on formigrams, i.e. for any formigrams $\theta_X$ and $\theta_Y$
			$$\dintf\left(S_\eps\theta_X,S_\eps\theta_Y\right)\leq \dintf \left(\theta_X,\theta_Y\right).$$
	\end{proposition}
	
	\begin{proof}
	For $\delta\geq 0$, assume that $\tripod$ is a $\delta$-tripod between $\theta_X$ and $\theta_Y$. We claim that $R$ is also a $\delta$-tripod between $S_\eps\theta_X$ and $S_\eps\theta_Y$. First, we remark that $\varphi_X^\ast S_\eps\theta_X=S_\eps\varphi_X^\ast  \theta_X$. Indeed, for any $I\in \Int$, $(\varphi_X^\ast S_\eps\theta_X)(I)=\varphi_X^\ast (S_\eps\theta_Y(I))=\varphi_X^\ast (\theta_X(I^\eps))=(S_\eps\varphi_X^\ast\theta_X)(I)$. Therefore,
	\begin{align*}
	    S_\delta \left(\varphi_X^\ast S_\eps\theta_X\right)&= S_\delta \left(S_\eps\varphi_X^\ast  \theta_X\right)\\&= S_{\eps}\left(S_\delta \varphi_X^\ast\theta_X\right)&\mbox{by Remark \ref{rem:smoothing forms semigroup}}\\&\geq S_\eps \varphi_Y^\ast\theta_Y&\mbox{by the choice of $R$}\\&=\varphi_Y^\ast S_\eps\theta_Y
	\end{align*}
	and by symmetry we have $S_\delta \left(\varphi_Y^\ast S_\eps\theta_Y\right)\geq \varphi_X^\ast S_\eps\theta_X$, completing the proof.
	\end{proof}

\section{About the $0$-slack interleaving distance between DMSs}\label{sec:0-slack DMS intereleaving}

\ok{We clarify the computational complexity of $\dintm$ (cf. Theorem \ref{thm:complex}) and provide a few examples of computing $\dintm$.}
	
	\paragraph{Computational complexity of $\dintm$.} We relate the Gromov-Hausdorff distance between two given ultrametric spaces to the interleaving distance $\dintm$ between certain DMSs induced by those ultrametric spaces. Then, invoking results from F. Schmiedl's PhD thesis \cite{schmiedl2015shape,schmiedl2017computational} we obtain the claim of Theorem \ref{thm:complex}. 
	
	Given a  ultrametric space $(X,u_X)$, define a DMS $\mathcal{D}(X,u_X):=(X,d_X(\cdot))$ where for all $x,x'\in X$ and for all $t\in \T$, $ d_X(t)(x, x'):=\max(0,u_X(x,x')-t)$. It is noteworthy that for any $x,x'\in X$, $d_X(\cdot)(x,x'):\T\rightarrow \R_+$ is decreasing down to zero and that $d_X(0)=u_X$, a legitimate metric (i.e. not just pseudo-metric), satisfying the second item of Definition \ref{dynamic}. Furthermore, note that $\mathcal{D}(X,u_X)$ is clearly piecewise linear and that the set of breakpoints is $S_{\mathcal{D}(X,u_X)} = \{u_X(x,x'),\,x,x'\in X\}.$ 
	Recall Definition \ref{def:the GH}.

	\begin{proposition}\label{prop:ultra2} For any two ultrametric spaces $(X,u_X)$ and $(Y,u_Y)$ we have $$\dintm(\mathcal{D}(X,u_X),\mathcal{D}(X,u_Y))=2\ d_\mathrm{GH}((X,u_X),(Y,u_Y)).$$ 
	\end{proposition}
	
	\begin{proof}Let $\mathcal{D}(X,u_X)= (X,d_X(\cdot))$ and $\mathcal{D}(Y,u_Y)=(Y,d_Y(\cdot))$. Observe that for any $x,x'\in X$, any $t\in \T$, and any $\eps\geq 0$, $\min_{s\in[t]^\eps}d_X(s)(x,x') = d_X(t+\eps)(x,x')$ since $d_X$ is decreasing over time. Thus, for some $\eps\geq0$, a tripod $\tripod$ is an $\eps$-tripod between $(X,d_X), (Y,d_Y)$ (Definition \ref{def:the GH}) if and only if for all $z,z'\in Z$ and for all $t\in \T$,  $d_X(t+\eps)\left(\varphi_X(z),\varphi_X(z')\right) \leq d_Y(t)\left(\varphi_Y(z),\varphi_Y(z')\right)$ and $d_Y(t+\eps)\left(\varphi_Y(z),\varphi_Y(z')\right)\leq d_X(t)\left(\varphi_X(z),\varphi_X(z')\right)$, if and only if for all $z,z'\in Z$ and for all $t\in \T$, $\max\left(0,u_X\left(\varphi_X(z),\varphi_X(z')\right)-t-\eps\right)\leq \max\left(0,u_Y\left(\varphi_Y(z),\varphi_Y(z')\right)-t\right)$ and $\max\left(0,u_Y\left(\varphi_Y(z),\varphi_Y(z')\right)-t-\eps\right)\leq \max\left(0,u_X\left(\varphi_X(z),\varphi_X(z')\right)-t\right)$ if and only if for all $z,z'\in Z$, \[\abs{u_X\left(\varphi_X(z),\varphi_X(z')\right)-u_Y\left(\varphi_Y(z),\varphi_Y(z')\right)}\leq \eps,\] completing the proof. 
	\end{proof}
	\paragraph{Proof of Theorem \ref{thm:complex}}\label{proof:complex}Pick any two ultrametric spaces $(X,u_X)$ and $(Y,u_Y)$. Then, by Proposition \ref{prop:ultra2}, the interleaving distance between $\mathcal{D}(X,u_X)$ and $\mathcal{D}(Y,u_Y)$ is identical to twice the Gromov-Hausdorff distance $\Delta:=d_\mathrm{GH}((X,u_X),(Y,u_Y))$ between  $(X,u_X)$ and $(Y,u_Y)$. The rest of the proof follows along the same lines as that of Theorem \ref{thm:DG metric-complexity}.
	\qed 
	
Next we discuss a few computational examples of $\dintm$. Let $\psi:\T\rightarrow \R_+$ be any non identically zero continuous function. Then,  for any finite metric space $(X,d_X')$ we have the DMS $\gamma_X^\psi = (X,d_X^\psi(\cdot))$ where for $t\in\T$,  $d_X^\psi(t):=\psi(t)\cdot d_X'.$

\begin{figure}
	\includegraphics[width=\textwidth]{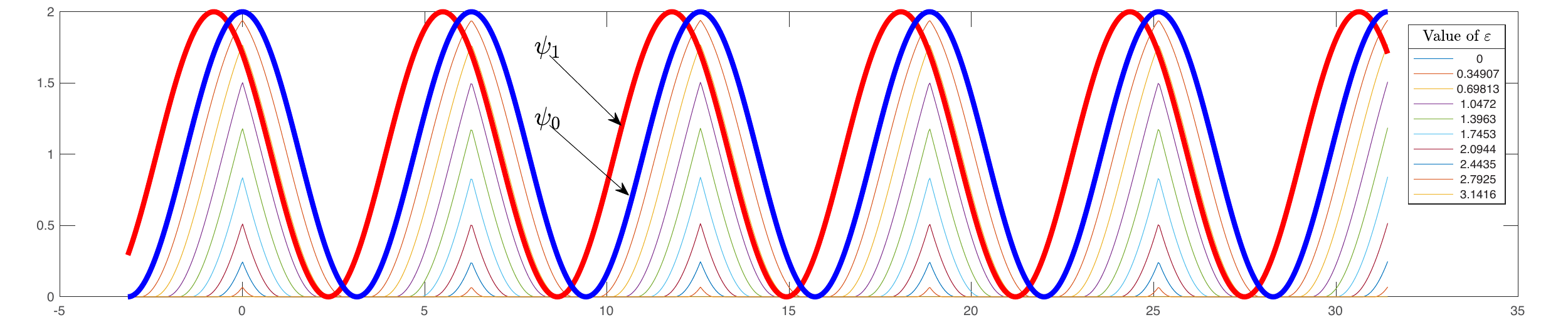}
	\caption{\textbf{The interleaving condition.} The thick blue curve and the thick red curve represent the graphs of $\psi_0(t)=1+\cos(t)$ and $\psi_1(t)=1+\cos(t+\pi/4),$ respectively. Fixing $\eps\geq 0$, define a function $S_\eps(\psi_0):\R\rightarrow \R$ by $S_\eps(\psi_0)(t):=\min_{s\in[t]^\eps}\psi_0(s)$.  The thin curves below the thick blue curve illustrate the graphs of $S_\eps(\psi_0)$ for several different choices of $\eps$.  Note that for $\eps\geq\pi/4\simeq 0.785$, it holds that $S_\eps(\psi_0)\leq \psi_1.$}\label{fig:cosine}
\end{figure}

\begin{example}[An interleaved pair of DMSs \uppercase\expandafter{\romannumeral1\relax}]\label{example0}This example refers to Figure \ref{fig:cosine}. Fix the two-point metric space $(X,d_X)=\left(\{x,x'\},\begin{psmallmatrix}0 & 1\\1 & 0\end{psmallmatrix}\right)$ and consider two DMSs $\gamma_X^{\psi_0}=(X,d_X^{\psi_0})$ and $\gamma_X^{\psi_1}=(X,d_X^{\psi_1})$ where, for $t\in \R$, $\psi_0(t)=1+\cos(t)$, $\psi_1(t)=1+\cos(t+\pi/4)$. Then, $\gamma_X^{\psi_0}$ and $\gamma_X^{\psi_1}$ are $\eps$-interleaved if and only if for $i,j\in\{0,1\}$, $i\neq j$, and for all $t\in \R$, $S_\eps(\psi_i)(t):=\min_{s\in[t]^\eps}\psi_i(s)=\left(\bigvee_{[t]^\eps}d_X^{\psi_i}\right)(x,x')\leq d_X^{\psi_j}(t)(x,x')=\psi_j(t)$. In fact, this inequality holds if and only if $\eps\geq\pi/4$, and hence $\dintm\left(\gamma_X^{\psi_0},\gamma_X^{\psi_1}\right)=\pi/4$ (cf. Figure \ref{fig:cosine}).
\end{example}

The following example generalizes the previous one.

\begin{example}[An interleaved pair of DMSs \uppercase\expandafter{\romannumeral2\relax}]\label{example}
	Fix the two-point metric space $(X,d_X)=\left(\{x,x'\},\begin{psmallmatrix}0 & 1\\1 & 0\end{psmallmatrix}\right)$ and consider two DMSs $\gamma_X^{\psi_0}=(X,d_X^{\psi_0})$ and $\gamma_X^{\psi_1}=(X,d_X^{\psi_1})$  where, for $t\in \R$, $\psi_0(t)=1+\cos(\omega t)$, $\psi_1(t)=1+\cos(\omega(t+\tau))$,  for fixed $\omega>0$ and $0<\tau<\frac{2\pi}{\omega}$. Since in this case $\psi_1(t) = \psi_0(t+\tau)$ for all $t$, one would expect that  the interleaving distance between $\gamma_X^{\psi_0}$ and $\gamma_X^{\psi_1}$ is able to uncover the precise the value of $\tau$. In this respect, we have:
	$
	\dintm(\gamma_X^{\psi_0},\gamma_X^{\psi_1})=\min \Big(\tau,\ \frac{2\pi}{\omega}-\tau\Big)=:\eta(\omega,\tau).
	$
	
\end{example}

	\section{Higher dimensional persistent homology barcodes of dynamic metric spaces.}\label{sec:higher dimensional barcodes}
	
	In this section we discuss extendibility of Theorem \ref{thm:main thm2}. The zigzag barcodes $\barc(\theta_X)$ and $\barc(\theta_Y)$ in Theorem \ref{thm:main thm2} encodes the clustering behaviors of the given DMSs for a fixed scale $\delta\geq 0$. 
	However, we do not need to restrict ourselves to clustering features of DMSs. Imagine that a flock of birds flies while keeping a circular arrangement from time $t=0$ to $t=1$. Regarding this flock as a DMS (trajectory data in $\R^3$), we may want to have an interval containing $[0,1]$ in its \emph{1-dimensional homology barcode}. This idea can actually be implemented as follows. 
	
	For a fixed $\delta\geq 0$, we substitute the Rips complex functor $\rips$ for the Rips graph functor $\rips^1$ in Proposition \ref{prop:from DMS to DG}. What we obtain is a \emph{dynamic simplicial complex or zigzag simplicial filtration}, a generalization of Definition \ref{def:dyn graphs}, induced from any tame DMS $\gamma_X$.  We then can apply the $k$-th homology functor to this zigzag simplicial filtration for each $k\geq 0$ in order to obtain a $\vect$-valued constructible cosheaf over $\R$. This zigzag module will be a signature summarizing the time evolution of $k$-dimensional homological features of $\gamma_X$. By virtue of Proposition \ref{prop:interval decomposability of constructible cosheaf} we eventually obtain the \emph{$k$-th homology barcode} $\barc\left(\Hrm_k\left(\rips(\gamma_X)\right) \right)$ of $\gamma_X$ with respect to the fixed scale $\delta\geq 0$; see also \cite{dey2021updating} for the computation of $\barc\left(\Hrm_k\left(\rips(\gamma_X)\right) \right)$ for various $\delta$. In particular, the $0$-th homology barcode of the resulting zigzag module coincides with $\barc\left(\pi_0\left(\rips^1(\gamma_X)\right)\right)$ as defined in Theorem \ref{thm:main thm2}. 
	
	A natural question is then to ask whether  our stability theorem (Theorem \ref{thm:main thm2}) can be extended to higher dimensional homology barcodes:
	
	\begin{question}For any pair of tame DMSs $\gammax$ and $\gammay$, is it true that for any $\delta\geq 0$ and for any $k\geq 1$,
		$$\bott\left(\barc\left(\Hrm_k\left(\rips(\gamma_X)\right) \right), \barc\left(\Hrm_k\left(\rips(\gamma_Y)\right) \right)\right) \leq 2\ \dintm\left(\gamma_X,\gamma_Y\right)\ ?$$
	\end{question}
	
	Interestingly, we found a family of counter-examples that indicates that stability, as expressed by Theorem \ref{thm:main thm2}, is a phenomenon which seems to be essentially tied to clustering (i.e. $\Hrm_0$) information.
	
	\begin{theorem}\label{thm:counter-example} For each integer $k\geq 1$ there exist two different tame DMSs  $\gamma_{X_k}$ and $\gamma_{Y_k}$, and $\delta_k\geq 0$ such that $ \dintm\left(\gamma_{X_k},\gamma_{Y_k}\right)<\infty$ but such that the bottleneck distance between the barcodes of $\Hrm_k\left(\mathcal{R}_{\delta_k}\left(\gamma_{X_k}\right)\right)$ and $\Hrm_k\left(\mathcal{R}_{\delta_k}\left(\gamma_{Y_k}\right)\right)$ is unbounded. 
	\end{theorem}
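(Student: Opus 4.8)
The plan is to falsify the would-be inequality by exhibiting, for each $k\ge 1$, a \emph{constant} DMS $\gamma_{X_k}$ that carries a $k$-dimensional hole at all times — hence an infinite bar in its $\Hrm_k$-barcode — together with a time-varying DMS $\gamma_{Y_k}$ on the same point set whose Rips complex never carries that hole on an unbounded time interval — hence no infinite bar — yet which is finitely interleaved with $\gamma_{X_k}$. Concretely, I would place the $2(k+1)$ points of $X_k=\{\pm e_1,\dots,\pm e_{k+1}\}$ at the vertices of a (rescaled) regular cross-polytope in $\R^{k+1}$, so that every \emph{non-antipodal} pair is at distance $\sqrt2\,r$ and every \emph{antipodal} pair at distance $2r$, and choose $\delta_k$ with $\sqrt2\,r\le \delta_k<2r$. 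Let $\gamma_{X_k}$ be the constant DMS on this configuration: its $\delta_k$-Rips complex is the clique complex on all non-antipodal pairs, i.e. the boundary of the cross-polytope, which is homeomorphic to $S^k$; thus $\Hrm_k\!\big(\mathcal R_{\delta_k}(\gamma_{X_k})\big)$ is constantly $\F$ and $\dgm\!\big(\Hrm_k(\mathcal R_{\delta_k}(\gamma_{X_k}))\big)=\lmulti(-\infty,\infty)\rmulti$. For $\gamma_{Y_k}$ I keep the same point set and prescribe a periodic round-robin motion of period $p$: each period is split into $2(k+1)$ brief, pairwise-disjoint sub-slots, and in the $i$-th sub-slot one distinguished vertex is pushed radially outward far enough that all its incident edges exceed $\delta_k$, every vertex being at its home position outside its own sub-slot.

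Next I would establish finite interleaving. Since a radial outward displacement only \emph{increases} the distances from the moving vertex to all the others, every pairwise distance function of $\gamma_{Y_k}$ is $\ge$ its home value and attains that value on the all-home part of each period. Taking $\eps=p/2$, so that every window $[t]^\eps$ contains a full period, one gets $\bigvee_{[t]^\eps}d_{Y_k}=d_{X_k}$ for all $t$ (Definition \ref{def:bigvee}). Because $d_{X_k}$ is a genuine metric and $\gamma_{X_k}$ is constant, Remark \ref{rem:dms_observations}(2), applied with the identity tripod $R:\ X_k\twoheadleftarrow X_k\twoheadrightarrow X_k$, then yields $\dintm(\gamma_{X_k},\gamma_{Y_k})\le \eps = p/2<\infty$. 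Realizing $\gamma_{Y_k}$ as an honest moving point cloud in $\R^{k+1}$ guarantees that each slice $\gamma_{Y_k}(t)$ is a bona fide pseudo-metric space, and the piecewise-smooth periodic motion makes each $d_{Y_k}(\cdot)(x,x')$ tame, so $\gamma_{Y_k}$ is a tame DMS distinct from $\gamma_{X_k}$.

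It then remains to compute the $\Hrm_k$-barcode of $\gamma_{Y_k}$ and conclude. During the $i$-th sub-slot the moving vertex is disconnected from the rest, so $\mathcal R_{\delta_k}(\gamma_{Y_k}(t))$ is the sphere with the open star of one vertex deleted, which is a $k$-disk and has $\Hrm_k=0$; moreover distances only grow during motion, so no new short pairs and hence no $k$-cycle other than the sphere itself can ever appear. Thus the sphere class is destroyed in every sub-slot, so for every $t$ there is a later sub-slot with $\Hrm_k=0$ (and, symmetrically, an earlier one); consequently no bar of $\dgm\!\big(\Hrm_k(\mathcal R_{\delta_k}(\gamma_{Y_k}))\big)$ extends to $+\infty$ or to $-\infty$, i.e. every bar is finite. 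Since $\gamma_{X_k}$ contributes the single infinite bar $(-\infty,\infty)$, any partial matching must leave it unmatched or pair it with a finite bar or with the diagonal, each of infinite cost, whence $\bott\big(\dgm(\Hrm_k(\mathcal R_{\delta_k}(\gamma_{X_k}))),\dgm(\Hrm_k(\mathcal R_{\delta_k}(\gamma_{Y_k})))\big)=\infty$ while $\dintm(\gamma_{X_k},\gamma_{Y_k})<\infty$, as required.

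The conceptual heart of the example — and the point I expect to require the most care — is that the interleaving is governed by the time-windowed minimum $\bigvee_{[t]^\eps}d_{Y_k}$, which \emph{reconstructs} all the edges of the hole-bearing configuration even though these edges never coexist at a single time; the higher-dimensional hole is therefore present only ``metrically after smoothing'', never in any raw time-slice. The main obstacle is thus twofold and purely verificational: confirming that the windowed minimum reproduces $d_{X_k}$ \emph{exactly} (so that Remark \ref{rem:dms_observations}(2) applies and the interleaving is finite), and confirming that the raw zigzag module $\Hrm_k(\mathcal R_{\delta_k}(\gamma_{Y_k}))$ genuinely lacks an infinite bar (so that the bottleneck distance blows up). Both reduce to the elementary facts that outward radial motion is distance-nondecreasing and that deleting the open star of one vertex of a triangulated $k$-sphere leaves a contractible complex.
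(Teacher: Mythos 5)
Your proposal is correct and follows essentially the same route as the paper: both take the $2(k+1)$ cross-polytope vertices in $\R^{k+1}$ as a constant DMS whose $\delta_k$-Rips complex is a $k$-sphere (hence an infinite bar in $\Hrm_k$), perturb it by pushing a vertex radially outward so that the sphere is periodically destroyed (killing all infinite bars), and obtain finite interleaving because the time-windowed minimum $\bigvee_{[t]^\eps}d_{Y_k}$ recovers the home metric over any window containing a full period. The only difference is cosmetic: the paper moves a single vertex as $(1+\sin^2 t)e_1$ with $\delta_k=\sqrt2$, whereas you cycle through all vertices in a round-robin, which changes the finite bars from instantaneous intervals $[n\pi,n\pi]$ to closed intervals of positive length but affects nothing in the conclusion.
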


	\begin{figure}
		\begin{center}
			\hspace{-2mm}\begin{tikzpicture}[scale=0.6]
			\draw (0,1.8) node {$\gamma_{X_1}$};\fill (0,1) circle (1mm);\fill (1,0) circle (1mm);\fill (-1,0) circle (1mm);\fill (0,-1) circle (1mm);
			\draw (-1,0)--(0,-1)--(1,0);
			\draw (-1,0)--(0,1)--(1,0);;
			
			\draw (1.2,0) node[anchor=north] {$e_1$};
			\draw (0.35,1) node {$e_2$};
			\draw (-1.3,0) node[anchor=north]{$-e_1$};
			\draw (0.3,-1) node[anchor=north] {$-e_2$}; 
			\end{tikzpicture}\hspace{-2mm}\begin{tikzpicture}[scale=0.6]
			\draw (-2,0) node {$\empty$};
			\draw (0,1.8) node {$\gamma_{X_1}'$}; 
			\draw[red,<->,>=stealth', line width=1pt] (1.2,0)--(2,0);			\draw (1,0) node[fill, red, star, inner sep=2pt,minimum size=2mm]{} ;
			\draw (0,1)--(-1,0)--(0,-1);
			\draw[dashed, red, line width=1.5pt] (0,1)--(1,0)--(0,-1);;
			
			\draw (2.4,0) node[anchor=north] {$(1+\sin^2(t))e_1$};
			\draw (0.35,1) node {$e_2$};
			\draw (-1.3,0) node[anchor=north]{$-e_1$};
			\draw (0.3,-1) node[anchor=north] {$-e_2$}; 
			\fill (0,1) circle (1mm);\fill (-1,0) circle (1mm);\fill (0,-1) circle (1mm); 			
			\end{tikzpicture}\hspace{-2mm}\begin{tikzpicture}[ {z=(60:0.5cm)}][scale=0.1]
			\draw (1.05,-0.1,0) node[anchor=north] {$e_1$};
			\draw (0,1.3,0) node {$e_3$};
			\draw (0,2) node {$\gamma_{X_2}$};
			\draw (-1,0,0)--(0,-1,0);
			\draw (0,-1,0)--(1,0,0);\draw (0,0,1)--(1,0,0)--(0,0,-1);
			\draw (1,0,0)--(0,1,0);
			\draw (0,0,1)--(0,-1,0);
			\draw (0,-1,0)--(0,0,-1);
			\draw (0,0,-1)--(-1,0,0)--(0,0,1);
			\draw (-1,0,0)--(0,1,0);
			\draw (0,0,1)--(0,1,0);
			\draw (0,0,-1)--(0,1,0);
			\fill[gray,opacity=0.3] (-1,0,0)--(0,-1,0)--(0,0,1)--cycle;
			\draw[fill,gray,opacity=0.3] (0,-1,0)--(0,0,-1)--(-1,0,0)--cycle;
			\fill[gray, opacity=0.3] (-1,0,0)--(0,1,0) --(0,0,1)--cycle;
			\fill[gray,opacity=0.3] (-1,0,0)--(0,1,0)--(0,0,-1)--cycle;
			\fill[gray, opacity=0.3] (1,0,0)--(0,1,0)--(0,0,1)--cycle;
			\fill[gray, opacity=0.3]
			(1,0,0)--(0,-1,0)--(0,0,1)--cycle;
			\fill[gray, opacity=0.3]
			(1,0,0)--(0,1,0)--(0,0,-1)--cycle; 
			\fill[gray, opacity=0.3]
			(1,0,0)--(0,-1,0)--(0,0,-1)--cycle; 
			\fill (0,0,1) circle (1mm);
			\fill (0,0,-1) circle (1mm); 
			\fill (0,1,0) circle (1mm); 
			\fill (0,-1,0) circle (1mm); 
			\fill (-1,0,0) circle (1mm); 
			\fill (1,0,0) circle (1mm); 
			\end{tikzpicture}\hspace{-0.5mm}\begin{tikzpicture}[{z=(60:0.5cm)}][scale=0.1]
			\draw (0,2) node {$\gamma_{X_2}'$};
			\draw (2,0,0) node[anchor=north] {$(1+\sin^2(t))e_1$};
			\draw (0,1.3,0) node {$e_3$};
			
			\draw (0,0,1)--(0,-1,0);
			\draw[fill,gray,opacity=0.3] (0,-1,0)--(0,0,-1)--(-1,0,0)--cycle;
			\fill[gray, opacity=0.3] (-1,0,0)--(0,1,0) --(0,0,1)--cycle;
			\fill[gray,opacity=0.3] (-1,0,0)--(0,1,0)--(0,0,-1)--cycle;
			\fill[gray,opacity=0.3] (-1,0,0)--(0,-1,0)--(0,0,1)--cycle;
			\fill[pink, opacity=0.3] (1,0,0)--(0,1,0)--(0,0,1)--cycle;
			\fill[pink, opacity=0.3]
			(1,0,0)--(0,-1,0)--(0,0,1)--cycle;
			\fill[pink, opacity=0.3]
			(1,0,0)--(0,1,0)--(0,0,-1)--cycle; 
			\fill[pink, opacity=0.3]
			(1,0,0)--(0,-1,0)--(0,0,-1)--cycle; 
			
			\draw (0,-1,0)--(0,0,-1);
			\draw (0,0,-1)--(-1,0,0)--(0,0,1);
			\draw (-1,0,0)--(0,1,0);
			\draw (0,0,1)--(0,1,0);
			\draw (0,0,-1)--(0,1,0);
			
			\draw[red,<->,>=stealth', line width=1.5pt] (1.2,0,0)--(2,0,0);
			\draw (-1,0,0)--(0,-1,0);
			\draw[dashed, red, line width=1.2pt] (0,-1,0)--(1,0,0);\draw[dashed, red, line width=1.2pt] (0,0,1)--(1,0,0)--(0,0,-1);\draw[dashed, red, line width=1.2pt] (1,0,0)--(0,1,0);
			\fill (0,0,1) circle (1mm);
			\fill (0,0,-1) circle (1mm); 
			\fill (0,1,0) circle (1mm); 
			\fill (0,-1,0) circle (1mm); 
			\fill (-1,0,0) circle (1mm); 
			\draw (1,0,0) node[fill, red, star, inner sep=2pt,minimum size=1mm]{}; 			\end{tikzpicture}			
		\end{center}
		\caption{\label{fig:counter-example}Pairs of DMSs $(\gamma_{X_i},\gamma_{X_i}')$ for $i=1,2$ such that $\dintm\left(\gamma_{X_i},\gamma_{X_i}'\right)\leq \pi/2$. In contrast, for $k=1$ (or $k=2$), the bottleneck distance between their $k$-dimensional zigzag-persistence  barcodes is infinite for $\delta\in [\sqrt{2},2)$. DMS $\gamma_{X_1}$, described as the left-most figure, ($\gamma_{X_2}$, the third figure from the left) consists of four (eight) static points located at $\pm e_1=(\pm 1,0,0)$ and $\pm e_2=(0,\pm 1,0)$ (and $\pm e_3=(0,0,\pm 1)$), respectively. On the other hand, DMS $\gamma_{X_1}'$, illustrated at the second from the left ($\gamma_{X_2}'$, at the right-most), contains a single oscillating point, denoted by a star shape, with  trace $(1+\sin^2(t))e_1$ for $t\in \R$ along with three (five) static points located at $-e_1, +e_2$ and $-e_2$, (and $\pm e_3$), respectively. Then, the 1-dimensional (2-dimensional) zigzag-persistent homology barcode for $\gamma_{X_1}$ (for $\gamma_{X_2}$) consists of exactly one interval $(-\infty,\infty)$, indicating the presence of a loop (a void) for all time. However, the barcode of $\gamma_{X_1}'$ ($\gamma_{X_2}'$) consists of an infinite number of ephemeral intervals $[n\pi,n\pi]$, $n\in \Z$, indicating the on-and-off presence of a loop (a void) that exists only at $t=n\pi$ for $n\in \Z$ in its configuration.} 
	\end{figure}
	
	\begin{proof}
		Fix any $k\geq 1$. We will illustrate DMSs  $\gamma_{X_k}$ and $\gamma_{Y_k}$ as collections of trajectories of points in $\R^{k+1}$, with the metric inherited from the Euclidean metric of $\R^{k+1}$ across all $t\in \T$. For $k=1$ or $k=2$, see Figure \ref{fig:counter-example}.
		
		Define $\gamma_{X_k}$ to be the constant DMS consisting of $2(k+1)$ points $\pm e_i=(0,\ldots,0,\pm 1,0,\ldots,0)\in \R^{k+1}$  for $i=1,2,\ldots,k+1$. On the other hand,  define $\gamma_{Y_k}$ to be obtained from $\gamma_{X_k}$ by substituting the still point $+e_1$ of $\gamma_{X_k}$ by the oscillating point $(1+\sin^2(t))e_1=(1+\sin^2(t),0,\ldots,0)$ for $t\in \T$. 
		
		It is not difficult to check that $\dintm\left(\gamma_{X_k},\gamma_{Y_k}\right)\leq \pi/2$. However, with the connectivity parameter $\delta=\sqrt{2}$, their barcodes of the $k$-th zigzag persistent homology are $\barc\left(\Hrm_k\left(\rips(X_k)\right)\right)=\{(-\infty,\infty)\}$ and $\barc\left(\Hrm_k\left(\rips(Y_k)\right)\right)=\{[n\pi,n\pi]:n\in \Z\}$, respectively.   
		Therefore, $\bott\left(\barc\left(\Hrm_k\left(\rips(X_k)\right)\right), \barc\left(\Hrm_k\left(\rips(Y_k)\right)\right) \right)=+\infty.$
 \end{proof}
	
	\bibliographystyle{abbrv}
	\bibliography{biblio}
\end{document}